\newcommand{\say}[1]{``#1''}
\newcommand{\tsc}[1]{\textsc{\text{#1}}}
\declaretheorem[name=Theorem]{theorem}
\declaretheorem[name=Corollary,numberlike=theorem]{corollary}
\declaretheorem[name=Lemma,numberlike=theorem]{lemma}
\declaretheorem[name=Observation,numberlike=theorem]{observation}
\crefname{observation}{observation}{observations}
\Crefname{observation}{Observation}{Observations}
\crefname{proposition}{proposition}{propositions}
\Crefname{proposition}{Proposition}{Propositions}
\declaretheorem[name=Conjecture,numberlike=theorem]{conjecture}
\crefname{conjecture}{conjecture}{conjectures}
\Crefname{conjecture}{Conjecture}{Conjectures}
\declaretheorem[name=Question,numberlike=theorem]{question}
\crefname{question}{question}{questions}
\Crefname{question}{Question}{Questions}
\declaretheorem[name=Definition,style=definition,numberlike=theorem]{definition}
\crefname{definition}{definition}{definitions}
\Crefname{definition}{Definition}{Definitions}
\declaretheorem[name=Claim,style=remark]{claim}
\crefname{claim}{claim}{claims}
\Crefname{Claim}{Claim}{Claims}
\renewenvironment{proof}{\par \noindent \textit{Proof.} }{\hfill$\Box$\\}
\newenvironment{proofof}[1]{\par \noindent \textit{Proof of #1.}
}{\hfill$\Box$\medskip}
\newcommand{\mydiamond}{\rotatebox[origin=c]{45}{$\vcenter{\hbox{$\Box$}}$}}
\newenvironment{subproof}[1]{\par\noindent \textit{Proof of #1.}\ }{\hfill \mydiamond \par\vspace{11pt}}
\crefname{subsection}{subsection}{subsections}
\Crefname{subsection}{Subsection}{Subsections}
\crefname{subsubsection}{subsubsection}{subsubsections}
\Crefname{subsubsection}{Subsubsection}{Subsubsections}
    \newcommand{\defproblem}[3]{
    \vspace{1mm}
    \noindent\fbox{
    \begin{minipage}{0.96\textwidth}
    \begin{tabular*}{\textwidth}{@{\extracolsep{\fill}}lr} #1 \\ \end{tabular*}
    {\textbf{Input:}} #2 \\
    {\textbf{Question:}} #3
    \end{minipage}
    }
    \vspace{1mm}
   }
    \newcommand{\defparproblem}[4]{
     \vspace{1mm}
     \noindent\fbox{
     \begin{minipage}{0.96\textwidth}
     \begin{tabular*}{\textwidth}{@{\extracolsep{\fill}}lr} #1 \\ \end{tabular*}
     {\textbf{Parameter:}} #3 \\
     {\textbf{Input:}} #2 \\
     {\textbf{Question:}} #4
     \end{minipage}
     }
     \vspace{1mm}
     \newline
    }
    \newcommand{\ora}[1]{\overrightarrow{#1}}
    \newcommand{\ola}[1]{\overleftarrow{#1}}
    \newcommand{\cF}{\mathcal{F}}
    \newcommand{\cS}{\mathcal{S}}
    \newcommand{\cT}{\mathcal{T}}
    \newcommand{\cX}{\mathcal{X}}
    \newcommand{\psca}{\textsc{Plane Strong Connectivity Augmentation}\xspace}
    \newcommand{\sopsca}{\textsc{PSCA}\xspace}
    \newcommand{\sdpsca}{\textsc{DPSCA}\xspace}
    \newcommand{\spsca}{\textsc{PSCA}\xspace}
    \newcommand{\spscap}{\textsc{PSCA$'$}\xspace}
    \newcommand{\dirPSCA}{\text{\textsc{Directed-PSCA}}\xspace}
    \newcommand{\lt}{\mathsf{lt}}
    \newcommand{\AF}{\mathcal{AF}} 
    \newcommand{\SF}{\mathcal{SF}} 
\newcommand{\eqdef}{\stackrel{{\scriptsize\rm def}}{=}}
\colorlet{myGreen}{green!50!black}
\colorlet{myLightgreen}{green}
\colorlet{myRed}{red!90!black}
\definecolor{myBlue}{rgb}{0.25, 0.0, 1.0}
\definecolor{myLightBlue}{rgb}{0.39, 0.58, 0.93}
\colorlet{myViolet}{myBlue!55!myRed}
\definecolor{myOrange}{rgb}{1.0, 0.66, 0.07}
\definecolor{CornflowerBlue}{rgb}{0.39, 0.58, 0.93}
\definecolor{DarkGoldenrod}{rgb}{0.72, 0.53, 0.04}
\definecolor{BritishRacingGreen}{rgb}{0.0, 0.26, 0.15}
\definecolor{DarkMagenta}{rgb}{0.55, 0.0, 0.55}
\definecolor{AO}{rgb}{0.0, 0.5, 0.0}
\definecolor{BostonUniversityRed}{rgb}{0.8, 0.0, 0.0}
\definecolor{myRed}{rgb}{0.8, 0.0, 0.0}
\definecolor{DarkMidnightBlue}{rgb}{0.0, 0.2, 0.4}
\definecolor{DarkTangerine}{rgb}{1.0, 0.66, 0.07}
\definecolor{AppleGreen}{rgb}{0.55, 0.71, 0.0}
\definecolor{BrightUbe}{rgb}{0.82, 0.62, 0.91}
\definecolor{Amethyst}{rgb}{0.6, 0.4, 0.8}
\definecolor{DarkGray}{rgb}{0.52, 0.52, 0.51}
\definecolor{Gray}{rgb}{0.66, 0.66, 0.66}
\definecolor{BananaYellow}{rgb}{1.0, 0.88, 0.21}
\definecolor{Amber}{rgb}{1.0, 0.75, 0.0}
\definecolor{LightGray}{rgb}{0.83, 0.83, 0.83}
\definecolor{PrincetonOrange}{rgb}{1.0, 0.56, 0.0}
\definecolor{DeepCarrotOrange}{rgb}{0.91, 0.41, 0.17}
\definecolor{CarrotOrange}{rgb}{0.93, 0.57, 0.13}
\definecolor{MidnightBlue}{rgb}{0.1, 0.1, 0.44}
\definecolor{Magenta}{rgb}{0.50, 0.0, 0.50}
\definecolor{BrightPink}{rgb}{1.0, 0.0, 0.5}
\definecolor{BrilliantRose}{rgb}{1.0, 0.33, 0.64}
\definecolor{ChromeYellow}{rgb}{1.0, 0.65, 0.0}
\definecolor{HotMagenta}{rgb}{1.0, 0.11, 0.81}
\definecolor{DarkTangerine}{rgb}{1.0, 0.66, 0.07}
\definecolor{darkyellow}{rgb}{.7, .6, 0.0}
\definecolor{CornflowerBlue}{rgb}{0.39, 0.58, 0.93}
\definecolor{DarkGoldenrod}{rgb}{0.72, 0.53, 0.04}
\definecolor{BritishRacingGreen}{rgb}{0.0, 0.26, 0.15}
\definecolor{AO}{rgb}{0.0, 0.5, 0.0}
\definecolor{MidnightBlack}{rgb}{0.1,0.1,.34}
\definecolor{MidnightBlue}{rgb}{0.1,0.1,0.43}
\definecolor{Black}{rgb}{0,0, 0}
\definecolor{Blue}{rgb}{0, 0 ,1}
\definecolor{Red}{rgb}{1, 0 ,0}
\definecolor{White}{rgb}{1, 1, 1}
\definecolor{DeepMagenta}{rgb}{0.8, 0.0, 0.8}
\definecolor{grey}{rgb}{.6, .6, .6}
\definecolor{darkgrey}{rgb}{.33, .33, .33}
\definecolor{Mygreen}{rgb}{.0, .7, .0}
\definecolor{Yellow}{rgb}{.55,.55,0}
\definecolor{Mustard}{rgb}{1.0, 0.86, 0.35}
\definecolor{applegreen}{rgb}{0.55, 0.71, 0.0}
\definecolor{darkturquoise}{rgb}{0.0, 0.81, 0.82}
\definecolor{celestialblue}{rgb}{0.29, 0.59, 0.82}
\definecolor{green_yellow}{rgb}{0.68, 1.0, 0.18}
\definecolor{crimsonglory}{rgb}{0.75, 0.0, 0.2}
\definecolor{darkmagenta}{rgb}{0.30, 0.0, 0.30}
\definecolor{magenta}{rgb}{0.50, 0.0, 0.50}
\definecolor{internationalorange}{rgb}{1.0, 0.31, 0.0}
\definecolor{darkorange}{rgb}{1.0, 0.55, 0.0}
\definecolor{ao}{rgb}{0.0, 0.5, 0.0}
\definecolor{awesome}{rgb}{1.0, 0.13, 0.32}
\definecolor{darkcyan}{rgb}{0.0, 0.50, 0.50}
\definecolor{violet}{rgb}{0.93, 0.51, 0.93}
\definecolor{brown}{rgb}{0.65, 0.16, 0.16}
\definecolor{orange}{rgb}{1.0, 0.65, 0.0}
\definecolor{DarkGreen}{rgb}{0,.5,0}
\definecolor{BostonUniversityRed}{rgb}{0.8, 0.0, 0.0}
\newcommand{\red}[1]{{\color{Red}#1}}
\newcommand{\FPT}{\textsf{FPT}\xspace}
\newcommand{\NP}{\textsf{NP}\xspace}
\newcommand{\yes}{{\sf yes}\xspace}
\newcommand{\no}{{\sf no}\xspace}
\newcommand{\Xend}{\accentset{\circ}{X}}
\title{Plane Strong Connectivity Augmentation
\thanks{SB and DT were supported by the ANR project GODASse ANR-24-CE48-4377.
AR was supported by the Polish National
Science Centre SONATA BIS-12 grant number 2022/46/E/ST6/00143. DT was supported by the French-German Collaboration ANR/DFG Project UTMA (ANR-20-CE92-0027), the  and the Franco-Norwegian project PHC AURORA 2024-2025 (Projet n°\! 51260WL).}}
\author[1]{Stéphane Bessy}
\author[1]{Daniel Gonçalves}
\author[1,2]{Amadeus Reinald}
\author[1]{Dimitrios M. Thilikos}
\affil[1]{LIRMM, Univ Montpellier, CNRS, Montpellier, France}
\affil[2]{University of Warsaw, Poland}
\begin{document}

\date{}

\maketitle

\begin{abstract}

    \noindent  We investigate the problem of strong connectivity augmentation within plane oriented graphs.
    We show that deciding whether a plane oriented graph $D$ can be augmented with (any number of) arcs $X$ such that $D+X$ is strongly connected, but still plane and oriented, is \NP-hard.
    This question becomes trivial within plane \textsl{di}graphs, like most connectivity augmentation problems without a budget constraint.
    
    The budgeted version, \tsc{Plane} \tsc{Strong} \tsc{Connectivity} \tsc{Augmentation} \tsc{(PSCA)} considers a plane oriented graph $D$ along with some integer $k$, and asks for an $X$ of size at most $k$ ensuring that $D+X$ is strongly connected, while remaining plane and oriented.
    Our main result is a fixed-parameter tractable algorithm for \spsca, running in time $2^{O(k)} n^{O(1)}$.
    The cornerstone of our procedure is a structural result showing that, for any fixed $k$, each face admits a bounded number of partial solutions \say{dominating} all others.
    Then, our algorithm for \spsca combines face-wise branching with a Monte-Carlo reduction to the polynomial \tsc{Minimum Dijoin} problem, which we derandomize.
    To the best of our knowledge, this is the first \FPT algorithm for a (hard) connectivity augmentation problem constrained by planarity.
\end{abstract}


\section{Introduction}

Suppose you are given a non-crossing network of one-way roads between cities, but where some cities are not reachable from others. The problem we address in this paper is the following:
\begin{quote}
    \textsl{How to construct a set of new one-way roads ensuring that every city is reachable from every other, without introducing crossings or two-way roads?}
\end{quote}
The above can be formalized as a problem over plane (embedded) oriented graphs (digraphs forbidding cycles of length two).
Then, we are asking for an \emph{augmentation} (addition of arcs) achieving strong connectivity, while maintaining a plane oriented graph.
The first question of interest to us is \say{augmentability}, that is whether such an augmentation exists at all, then we address the computation of a minimum augmentation.
The latter minimization question belongs to the broader class of connectivity augmentation problems, a well-established area of study in both undirected and directed graphs, with applications to Survivable Network Design.
For a comprehensive overview, see the survey chapter by Frank and Jordán~\cite{frankGraphConnectivityAugmentation2015}.

Connectivity augmentation has a long history in the undirected setting, mainly as minimization problems, rather than augmentability ones.
Indeed, most augmentability questions become trivial without a budget condition: simply add all possible edges (arcs) and verify the resulting connectivity.
The most studied examples are \tsc{$c$-Vertex} \tsc{Connectivity} \tsc{Augmentation} and \tsc{$c$-Edge} \tsc{Connectivity} \tsc{Augmentation}, respectively $c$-\tsc{VCA} and $c$-\tsc{ECA}.
These problems ask for the minimum number of edge additions —allowing parallel edges— required to make a graph $c$-vertex-connected or $c$-edge-connected respectively.
For edge-connectivity, a uniform polynomial-time algorithm for $c$-\tsc{ECA} was established by Watanabe and Nakamura~\cite{watanabeEdgeconnectivityAugmentationProblems1987}.
In contrast, the complexity of $c$-\tsc{VCA} stands as a major open problem in the field, though fixed-parameter tractability in $c$ has been established by Jackson and Jordán~\cite{jacksonIndependenceFreeGraphs2005}.
Weighted versions of these problems are already \NP-hard for $c = 2,$ via simple reductions from \tsc{Hamiltonian Cycle}~\cite{eswaranAugmentationProblems1976}. While constant-factor approximation algorithms are known~\cite{fredericksonApproximationAlgorithmsSeveral1981, traub20231}, the parameterized complexity of the weighted variants remains largely unresolved (see~\cite{nutovParameterizedNodeConnectivityAugmentation2024}).

For digraphs, the first notion of interest is strong connectivity, asking for a directed path between any two vertices.
In their seminal paper, Eswaran and Tarjan~\cite{eswaranAugmentationProblems1976} showed a polynomial-time algorithm for \tsc{Strong} \tsc{Connectivity} \tsc{Augmentation} (\tsc{SCA}), which asks for a minimal number of arc additions rendering a digraph strongly connected.
For higher connectivities, \tsc{$k$-Arc SCA} was shown to be (uniformly) polynomial by Frank~\cite{frankAugmentingGraphsMeet1992}.  
As in the undirected case, the complexity of \tsc{$k$-Vertex SCA} is yet to be settled, though Frank and Jordan~\cite{frankMinimalEdgeCoveringsPairs1995} showed the problem is \FPT.
As already observed by Eswaran~\cite{eswaranAugmentationProblems1976}, the (arc) weighted versions of these problems are \NP-hard, even for \tsc{Weighted SCA}.
On the positive side, a $2$-approximation was obtained by Frederickson and Ja'Ja~\cite{fredericksonApproximationAlgorithmsSeveral1981}, and more recently, a $2^{O(k\log k)} n^{O(1)}$ \FPT algorithm was established by Klinkby, Misra, and Saurabh~\cite{klinkbyStrongConnectivityAugmentation2021}.

In this paper, we are interested in strong connectivity augmentation constrained by planarity, but also simplicity: oriented graphs forbid parallel arcs.
As we survey below, both of these constraints have received considerable attention in the field, but with only a few positive results.

\paragraph{Simplicity constraints in (di)graphs.}
At the source of the two main polynomial algorithms discussed above, for \tsc{$k$-ECA} and \tsc{$k$-Arc SCA}, is the ability to add parallel edges and arcs (and digons).
In other words, these algorithms are really solving problems within multi(di)graphs, rather than within (simple) graphs.
This may also explain the discrepancy between these positive results for edge (arc) connectivity, and the elusiveness of their vertex-connectivity counterparts.
This was already noted by Frank~\cite{frankGraphConnectivityAugmentation2015}, raising the question of algorithms for connectivity augmentation preserving simplicity.

In the undirected setting, Jordan~\cite{jordanTwoNPCompleteAugmentation1997} showed that the simplicity-preserving variant of \tsc{ECA} is \NP-hard, when the target connectivity $c$ is part of the input.
Still, this problem is \FPT in $c,$ as shown by Bang-Jensen and Jordan~\cite{bang-jensenEdgeConnectivityAugmentationPartition1999} (see also~\cite{johansenEdgeConnectivityAugmentationSimple2025} for an \say{incremental} variant).
In the directed setting, these questions were investigated by Bérczi and Frank~\cite{bercziSupermodularity2018}. There, they obtained a polynomial-time algorithm for the problem of increasing arc-connectivity by one within simple digraphs, forbidding multi-arcs but still allowing digons.

\paragraph{Planarity constraints in graphs.}
Given a (simple) graph, the next natural constraint to impose on the structure of the augmented graph is planarity, which can be understood in multiple settings. 
In the abstract graph setting, corresponding to \emph{planar} problems, both the input and output graphs should be planar, but are not constrained by any embedding.
The topological setting, which is our main interest, corresponds to \emph{plane} versions, where the input comes with a fixed embedding, that must be preserved by the augmentation.
A third setting of interest is given by \emph{geometric} variants, which consider plane straight-line graphs, and ask for augmentations with straight line edges. 

In 1989, Rappaport~\cite{rappaportComputingSimpleCircuits1989} showed the NP-hardness of \tsc{Simple Circuit}.
There, we are given a set of segments embedded in the plane, and ask whether these can be augmented into a simple polygon by the addition of new segments.
This problem can be seen as a restriction of \tsc{$2$-Connectivity Augmentation} within plane straight-line graphs, and appears to be the first consideration of a (connectivity) augmentation problem constrained by planarity.
This can also be seen as an \say{augmentability} problem, because the restriction of being a polygon already imposes that we add exactly as many segments as we were given.
The first \textsl{planar} connectivity augmentation problem that was shown to be NP-hard is \tsc{Planar $2$-VCA}, due to a result of Kant and Bodlaender~\cite{kantPlanarGraphAugmentation1991}.
Rutter and Wolff \cite{rutter_wolff_2012} showed the \NP-hardness for the plane and geometric variants of \tsc{$2$-ECA}, and Gutwenger, Mutzel and Zey~\cite{gutwengerPlanarBiconnectivityAugmentation2009} proved \NP-hardness for \tsc{Plane $2$-VCA}.
On the positive side, \tsc{Planar $2$-VCA} admits a $2$-approximation~\cite{kantPlanarGraphAugmentation1991} algorithm, and \tsc{plane $2$-VCA} is polynomial for connected instances~\cite{gutwengerPlanarBiconnectivityAugmentation2009}. 
Until this paper, no \FPT algorithms have been obtained for \NP-hard connectivity augmentation problems constrained by planarity.

\paragraph{Structural constraints for \tsc{SCA}.}
Today, little is known about the complexity of strong connectivity augmentation subject to structural constraints, other than the simplicity considerations of~\cite{bercziSupermodularity2018}.
Motivated by a problem about the rigidity of square grid frameworks, Gabow and Jordán \cite{gabowHowMakeSquare2000} devised a polynomial-time algorithm for \tsc{SCA} when the input is a bipartite digraph and the bipartition of the input must be preserved.
Another interesting structural restriction, orthogonal to our oriented requirement, is the case where the only new allowed arcs are those $(v,u)$ such that $(u,v)$ belongs to the initial digraph.
That is, we wish to make a digraph strongly connected by turning arcs into digons.
This is in fact equivalent to the \tsc{Minimum Dijoin} problem, which was shown to be polynomial by Frank~\cite{frankHowMakeDigraph1981}, through a constructive proof of the Lucchesi-Younger theorem~\cite{lucchesiMinimaxTheoremDirected1978}.

\subsection{Our results}

We are interested in strong connectivity augmentations within plane simple digraphs.
There are two natural ways of asking for a digraph to be simple: forbidding parallel arcs in the same direction defines digraphs, while forbidding \textsl{any} parallel arcs defines oriented graphs.
We will be dealing with the latter interpretation, motivated by the following \emph{augmentability} question: given a plane simple digraph, can it be augmented to strong connectivity, while staying plane and simple?
If we understand simplicity in the digraph sense, the answer is always yes: any maximal set of arcs yields a bidirected triangulation, which is strongly connected.
Therefore, our augmentability question can only be of interest within plane \textsl{oriented} graphs. Indeed, these cannot always be augmented as desired, take for example any oriented triangulation that is not already strongly connected.
Our first result is that deciding this strong connectivity augmentability within plane oriented graphs is already~\NP-hard.
\begin{restatable}{theorem}{npHardOpsca}\label{thm_np_hardness}
    Deciding whether a plane oriented graph admits a strongly connected augmentation that is plane and oriented is \NP-complete. 
    Moreover, under the Exponential Time Hypothesis, this problem does not admit a $2^{o(\sqrt{n})}$-time algorithm.
\end{restatable}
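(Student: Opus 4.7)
The plan is to prove both NP-completeness and the ETH lower bound by a single linear-size reduction from a planar NP-hard problem with a $2^{\Omega(\sqrt{n})}$ ETH bound, the most natural candidate being \tsc{Planar 3-SAT} (one could equivalently start from \tsc{Planar 1-in-3-SAT}, or from \tsc{Hamiltonian Cycle} in cubic planar bipartite graphs). Membership in \NP is immediate: any plane oriented augmentation $X$ of $D$ satisfies $|X| \leq 3n-6$, because $D+X$ is planar, so $X$ is a polynomial certificate whose validity (planarity preservation, no digon, strong connectivity) can be checked in polynomial time.

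For hardness, given a Planar 3-SAT instance $\varphi$ with a fixed rectilinear embedding of its variable-clause incidence graph, I would build a plane oriented graph $D_\varphi$ of size $O(|\varphi|)$ from three kinds of gadgets, placed along this embedding. A \emph{variable gadget} for $x$ is a small plane oriented graph containing a distinguished face whose boundary is not a directed cycle, and where achieving strong connectivity forces the addition of exactly one of two non-crossing chord arcs; the two choices encode $x=\text{true}$ and $x=\text{false}$. A \emph{wire gadget} is a long plane oriented chain from a variable to a clause, built so that, given the state received at the variable side, there is a unique plane oriented way to make it strongly connected, thereby deterministically propagating the truth value. A \emph{clause gadget} for $C = \ell_1 \lor \ell_2 \lor \ell_3$ contains an inner face strongly-augmentable only if at least one of three designated chord arcs (one per literal) can be legally added, where a chord is legal precisely when the corresponding wire has delivered the satisfying value.

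The gadgets are glued so that their shared face boundaries agree, and one proves that $D_\varphi$ admits a plane oriented strongly connected augmentation iff $\varphi$ is satisfiable. Since the construction is linear in $|\varphi|$ and, under ETH, \tsc{Planar 3-SAT} admits no $2^{o(\sqrt{|\varphi|})}$-time algorithm, the $2^{o(\sqrt{n})}$ lower bound for our problem follows.

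The main obstacle is the gadget design itself: the combined planarity and oriented constraints on $X$ must be simultaneously strong enough to locally force the intended semantics (binary choice at variables, deterministic propagation along wires, OR-satisfaction at clauses), yet permissive enough that a valid global augmentation exists whenever $\varphi$ is satisfiable. Particular care is required at gadget interfaces, so that no \emph{shortcut} chord across unrelated faces can accidentally repair strong connectivity while bypassing a local choice. A useful tool here is to surround each gadget with long directed \emph{barrier paths} whose orientation is such that any cross-gadget chord would either cross an existing arc (violating planarity) or reverse one (creating a digon), confining the interesting augmentations to the intended local faces.
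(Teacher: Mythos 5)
Your high-level route coincides with the paper's: membership in \NP via the $|X|\leq 3n-6$ bound, plus a linear-size reduction from \tsc{planar-$3$-SAT} (which is exactly the source problem the paper uses, citing Lichtenstein for the quadratic blow-up that gives the $2^{o(\sqrt{n})}$ ETH bound). However, what you have written is a template, not a proof: the entire technical content of this theorem is the construction of the gadgets and the verification that the planarity-plus-orientation constraints force the intended semantics, and you explicitly defer this (\say{the main obstacle is the gadget design itself}) without resolving it. Asserting that a variable gadget exists \say{where achieving strong connectivity forces the addition of exactly one of two non-crossing chord arcs} is precisely the claim that needs a construction and a case analysis; the paper spends several pages on it (an alternating octagon with pendant twins and a chord, whose two $5$-faces admit exactly two maximal valid completions, proved by enumerating how the three internal sinks and one internal source can be hit).

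Two specific components of your plan are also doubtful as stated. First, the \emph{wire gadget} with \say{deterministic propagation}: strong connectivity is a global property, so the claim that the state at one end determines \say{a unique plane oriented way to make it strongly connected} is not a local statement one can take for granted, and the paper avoids wires altogether — it places one literal gadget per occurrence of $x$ directly inside the variable gadget, arranged cyclically with consecutive literal gadgets sharing (identified) bottom sources, so that consistency is forced combinatorially rather than propagated; clause gadgets are then attached by \emph{vertex identification} with the top sources of the relevant literal gadgets. Second, your \say{barrier paths} device for preventing shortcut chords is replaced in the paper by a cleaner mechanism: internal terminals of a gadget lie on no outer face, so any solution must hit them with arcs embedded in the gadget's (small) inner faces, and a domination/maximality argument (any solution's restriction to a gadget is a valid completion and may be replaced by a maximal one) normalizes arbitrary solutions to the two intended ones. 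Without concrete gadgets and these normalization arguments, the equivalence \say{$D_\varphi$ augmentable iff $\varphi$ satisfiable} — especially the direction from a solution back to an assignment — is not established.
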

\noindent
This situates our question as one of the few connectivity augmentation problems that is \NP-hard because of structural constraints, rather than budget ones. Another example being~\tsc{Simple Circuit}~\cite{rappaportComputingSimpleCircuits1989}, where structure already forces an exact size for the solution (which is not the case for \spsca). Our reduction can be adapted to produce $3$-connected instances, which are uniquely embeddable, and yield that the planar variant of our question is also hard.
While~\Cref{thm_np_hardness} is reminiscent of \tsc{Plane $2$-VCA} and \tsc{Planar $2$-VCA} being hard, the hardness of the latter problems does rely on a budget.

The budget-constrained version of our question, \spsca defined below, is also \NP-complete thanks to~\Cref{thm_np_hardness}.

\defparproblem{\psca (\textsc{PSCA})}{A connected plane oriented graph $D$ and  an integer $k$}{$k$}{Is there some $X \subseteq V(D)^2$ with $|X| \leq k,$ such that $D+X$ is strongly connected, oriented, and plane for the same embedding of $D$?}
\medskip

\noindent
Our main result is the fixed-parameter tractability of \spsca parameterized by $k$.
\begin{restatable}{theorem}{fptPsca}\label{thm_fpt}
    \psca is \FPT with respect to the solution size $k$ and admits a $2^{O(k)} n^{O(1)}$ algorithm.
\end{restatable}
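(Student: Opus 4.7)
The plan is to combine a local, per-face structural analysis with a global optimisation via the polynomial \tsc{Minimum Dijoin} problem, as the abstract suggests. A first basic observation is that any arc of a valid augmentation $X$ must be embedded strictly inside a single face of $D$, since otherwise the drawing would not remain plane. Consequently, $X$ decomposes naturally as $X = \bigsqcup_f X_f$, where $X_f \subseteq X$ collects the arcs drawn in face $f$, and $|X| \le k$ forces at most $k$ faces to be \emph{active} (satisfy $X_f \neq \emptyset$).

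The first main step is the structural theorem highlighted in the abstract: for each face $f$, one builds a family $\mathcal{P}_f$ of at most $2^{O(k)}$ ``canonical'' candidate partial solutions of size at most $k$, such that any valid $X_f$ of size $\le k$ can be replaced by some $P \in \mathcal{P}_f$ without altering the global strong-connectivity contribution of the face. The intuition is that this contribution is entirely captured by the reachability pattern that $X_f$ induces among the at most $2k$ boundary vertices of $f$ it touches: since the face interior is a topological disk, the number of such patterns realisable by a plane oriented arc set of size $\le k$ should be bounded by a Catalan-like quantity of order $2^{O(k)}$. I would prove this by a local-exchange argument that normalises any valid $X_f$ into a canonical representative while preserving boundary reachability, planarity and orientedness. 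This structural lemma is the step I expect to be the main obstacle, because the interaction between planarity, orientedness (no digon with boundary arcs) and the bookkeeping of strong-connectivity contributions at the boundary demands a delicate case analysis.

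With the per-face dominance in hand, a naive enumeration ``pick $\le k$ active faces, and a canonical solution for each'' already costs $n^{\Omega(k)}$, which is not \FPT. To reach $2^{O(k)} n^{O(1)}$, I would use color coding: draw a $k$-colouring of the faces of $D$ from an explicit $(|F(D)|,k)$-perfect hash family of size $2^{O(k)} \log n$, so that for the correct colouring the active faces receive pairwise distinct colours. For each colouring and each of the $2^{O(k)}$ assignments of one canonical partial solution per colour class, it then remains to select at most one face per class on which to install the chosen partial solution so as to make the augmented graph strongly connected.

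I would phrase this residual task as an instance of \tsc{Minimum Dijoin}: build an auxiliary digraph whose vertex set reflects the strong components of $D$, whose candidate arcs encode the per-class (face, canonical-solution) options, and whose directed cuts correspond exactly to the obstructions to strong connectivity of the augmented $D$. A minimum dijoin in this auxiliary graph, computable in polynomial time by Frank's algorithm for the Lucchesi--Younger theorem, then yields an optimum selection of canonical partial solutions. Together with the derandomisation of the colouring via the explicit perfect hash family, this gives the claimed deterministic $2^{O(k)} n^{O(1)}$ algorithm; the conceptual difficulty lies in Step~1, while the Minimum Dijoin reduction is mostly a matter of encoding the right cut structure into the auxiliary digraph.
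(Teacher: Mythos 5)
Your high-level architecture matches the paper's: a per-face family of ``dominating'' canonical completions, a derandomized guessing step (your perfect-hash colouring is essentially the paper's $(n,k,c)$-universal set), and a reduction of the residual global selection to \tsc{Minimum Dijoin} solved by Frank's algorithm. However, there is a concrete quantitative gap in your Step 2 that blocks the claimed $2^{O(k)}$ bound. If each face only comes with a family $\mathcal{P}_f$ of size $2^{O(k)}$, then committing to ``one canonical partial solution per colour class'' requires guessing an index in a set of size $2^{O(k)}$ for each of the $k$ classes, which is $2^{O(k^2)}$ branches, not $2^{O(k)}$ (your own count of $2^{O(k)}$ assignments is inconsistent with your own per-face bound). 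The paper escapes this via a dichotomy you do not make: \emph{alternating} faces (those inducing at least four local terminal components) number only $O(k)$ in any positive instance, by an Euler-formula counting argument, so one can branch over all $2^{O(k)}$ of their canonical completions directly with no hashing at all; the \emph{simple} faces, which can be $\Theta(n)$ many, are shown to require at most $3$ arcs each in a minimum solution and hence to admit only an \emph{absolute constant} number of minimal canonical completions. It is precisely this constant alphabet that makes the universal-set guessing over the $\le k$ active simple faces cost $2^{O(k)}$ and feed cleanly into a single Dijoin instance. Without an analogue of that constant bound for the numerous faces, your colour-coding step does not give the stated running time (though it would still yield \FPT with a $2^{O(k^2)}$ factor, assuming the rest goes through).

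Two further cautions. First, your structural Step 1 is stated in terms of preserving ``the reachability pattern among the boundary vertices that $X_f$ touches,'' but the canonical representative will in general touch \emph{different} boundary vertices, and what must be preserved is the effect on strong connectivity of the whole of $D+X$; the paper's normalisation (shifting endpoints along the boundary towards a bounded set of ``support'' angles) has to fight both with existing arcs of $D$ between boundary vertices that are embedded \emph{outside} the face (when $D$ is not $3$-connected) and with the no-digon constraint, and the shifts can temporarily break strong connectivity, which must then be repaired. You correctly flag this as the hard step, but the invariant you propose to preserve is not quite the right one. Second, the Dijoin encoding needs more care than ``dicuts correspond to obstructions'': since a dijoin reverses \emph{existing} arcs, one must plant one cheap reversible gadget arc per candidate new arc and make every original arc too expensive to reverse (the paper uses $(k{+}1)$-subdivisions and length-$(k{+}1)$ paths), so that the cost of the dijoin equals the number of added arcs.
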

\noindent
Our algorithm can be extended to work for any connected oriented graph (cellularly) embedded on a surface of bounded genus.
It would also be interesting to obtain an algorithm for the planar variant of \spsca, where the embedding is not fixed.
Regarding simplicity, it is not clear whether \tsc{SCA} remains polynomial when we impose the resulting digraph to be oriented (as in \spsca).

As we have already argued, the augmentability variant of \spsca is trivial for digraphs, but its budget-constrained version remains of interest:\medskip

\defproblem{\tsc{Directed Plane Strong Connectivity Augmentation} (\dirPSCA)}{A connected plane digraph $D$ and an integer $k$}{Is there some $X \subseteq V(D)^2$ with $|X|\leq k$ such that $D+X$ is strongly connected, directed, and plane for the same embedding of $D$?}\medskip\medskip

\noindent
While we show this problem is \FPT as an introduction to our algorithm for \spsca (\Cref{sec_dirPSCA}), we conjecture it should even be polynomial.
\begin{conjecture}
    \dirPSCA is polynomial-time solvable.
\end{conjecture}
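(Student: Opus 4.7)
The plan is to strengthen the \FPT algorithm for \dirPSCA developed earlier in the paper into a polynomial-time algorithm by dispensing with the branching step. The cornerstone of that \FPT algorithm is a face-wise branching followed by a reduction to \tsc{Minimum Dijoin}; I believe the branching becomes superfluous in the directed setting, where the relaxed simplicity constraint (digons are allowed) should give each face a polynomial-sized family of ``canonical'' contributions that can be merged globally rather than guessed.

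First, I would refine the structural analysis that bounds the number of relevant partial solutions per face. In the oriented case, this number grows with $k$ because adding an arc $(u,v)$ inside a face $F$ forbids later adding $(v,u)$ in $F$, creating a long-range dependency that seems to force branching. In the directed case, this forbidden pair disappears: within $F$, the only coupling between chords is the planar non-crossing condition on their underlying edges. I would aim to show that an optimal solution uses, inside each face $F$, a non-crossing family of chords whose size is polynomial in the number of strongly connected components of $D$ incident to $F$, and whose directions are individually determined (up to symmetry) by the reachability requirement between the two arcs of $F$ they separate.

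Second, I would reduce \dirPSCA to a \tsc{Minimum Dijoin}-style problem on an auxiliary digraph $D'$ built from the condensation of $D$ together with the candidate chords of every face. The condition that $D+X$ is strongly connected should translate to $X$ forming a dijoin of $D'$ (a set of arcs meeting every directed cut). Frank's polynomial algorithm~\cite{frankHowMakeDigraph1981}, based on the Lucchesi-Younger theorem~\cite{lucchesiMinimaxTheoremDirected1978}, would then yield an optimal $X$. A natural way to encode a candidate chord of $F$ in $D'$ is to insert it as a low-capacity arc between the two strongly connected components it would merge, keeping the intrinsic direction choice encoded by parallel auxiliary arcs.

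The main obstacle I anticipate is reconciling the face-local non-crossing constraint with the global dijoin condition: an arbitrary subset of candidate chords per face need not be realizable simultaneously, since chords on $F$ must be non-crossing. This is a matroidal-type restriction not handled by plain \tsc{Minimum Dijoin}. Overcoming it will likely require either a matroid-constrained extension of minimum dijoin, or a more delicate gadget construction in $D'$ that rules out crossing pairs using only reachability relations. A promising angle is to show that, once the pairs of consecutive strongly connected components on the boundary of each face $F$ to be joined are fixed, the corresponding non-crossing chord selection is \emph{forced}, eliminating the matroidal freedom; the remaining discrete choice over such pairings should then be tractable via a single global min-cost flow or dijoin computation.
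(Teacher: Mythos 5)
The statement you are addressing is stated in the paper as an open \emph{conjecture}: the authors prove only that \dirPSCA is \FPT (via face-wise branching plus a reduction to \tsc{Minimum Dijoin}) and explicitly leave polynomiality open. So there is no proof in the paper to compare against, and your text is a research plan rather than a proof; as written it does not establish the conjecture.

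The central gap is the one you yourself flag and then defer. Your first step asserts that in the directed setting each face admits a \emph{polynomial}-sized family of canonical partial solutions, but the paper's own analysis (the enumeration lemma for alternating faces) shows that the number of supported completions of a face $F$ is $2^{\Theta(\lt(F))}$ — Catalan-number growth in the number of local terminals — and a single alternating face can have $\lt(F) = \Theta(k)$, indeed $\Theta(n)$ without the budget. Allowing digons does not collapse this count: the combinatorial freedom comes from \emph{which} non-crossing pairs of local terminals to join, not from the orientation of individual chords (which, as you note, is essentially forced from local sinks to local sources). Your proposed fix — that once the pairing of components on the boundary is fixed the chord selection is forced, and that the pairing choice is \say{tractable via a single global min-cost flow or dijoin computation} — is exactly the unproven step: the set of non-crossing pairings is itself exponentially large, non-crossing chord families do not form a matroid (the exchange axiom fails), and \tsc{Minimum Dijoin} has no mechanism to exclude crossing pairs, since crossing is a geometric constraint invisible to reachability. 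Absent a concrete gadget in $D'$ that simultaneously encodes non-crossing and dijoin feasibility, or a structural theorem showing an optimal solution always uses a \emph{canonical} (e.g.\ nested or laminar) pairing computable by dynamic programming over the face boundary, the argument does not go through. A more promising concrete route, consistent with the paper's setup, would be a polynomial DP over each alternating face combined with the Lucchesi--Younger LP duality globally, but that synthesis is precisely what remains to be done.
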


Let us also recall that the parameterized complexity of planar connectivity augmentation problems is wide open. In particular, we lack an undirected analogue of~\Cref{thm_fpt} for biconnectivity.
\begin{question}
    Is \tsc{Plane $2$-VCA} \FPT parameterized by solution size?
\end{question}

\paragraph{Scheme of the \FPT algorithm.}
Our strategy to solve \spsca (and \dirPSCA) proceeds by searching for \emph{supported} solutions.
Intuitively, these correspond to solutions $X$ where in each face $F$ of $D$, the endpoints of $X$ are skewed towards \say{terminal components} of the digraph induced by $F$.
While defining supported solution is already technical, our main structural result is a bound on the number of supported completions — restrictions of a supported solution to $F$ — as a function of $k$.
Then, we distinguish simple and alternating faces, according to the number of \say{terminal} components they induce.
We show that for positive instances $(D,k)$, the number of alternating faces is always bounded by a function of $k$, which enables branching over all supported completions over them.
This reduces our question to the computation of a solution adding arcs only within simple faces.
The main obstacle then is that the number of simple faces can be linear in the size of the instance, but crucially, each of them admits a (absolute) constant number of \say{minimal} supported completions.
We devise a randomized procedure guessing one \say{allowed} supported completion within each simple face, then reduce the problem of finding a minimum solution using only those arcs to \tsc{Minimum Dijoin}. Using the algorithm of Frank~\cite{frankHowMakeDigraph1981} for the latter yields a polynomial Monte-Carlo algorithm computing a solution within simple faces. This can then be derandomized using universal sets, at the cost only of a $2^{O(k)} n^{O(1)}$ factor to the running time.
Combining our branching for alternating faces with the (derandomized) algorithm for simple faces yields a $2^{O(k)} n^{O(1)}$ algorithm deciding the existence of a (supported) solution for \spsca.

\paragraph{Structure of the paper.}
After some preliminaries in~\Cref{sec_def}, we show an \FPT algorithm for \dirPSCA in~\Cref{sec_dirPSCA}, serving as an introduction to our algorithm for~\spsca.
While the overarching steps are common, most technicalities of \spsca are short-circuited in~\dirPSCA, where supported solutions admit a very natural definition.
We finish this section with~\Cref{ssec_adapt}, discussing the technicalities of \spsca compared to \dirPSCA: defining supported solutions, and the necessity to randomize the computation of a solution within simple faces.
Then~\Cref{sec_psca} shows shows our \FPT algorithm for \spsca, achieving to prove~\Cref{thm_fpt}.
Finally, we show the hardness of \spsca in \Cref{sec_hardness}, yielding \Cref{thm_np_hardness}.

\section{Preliminaries}\label{sec_def}


All digraphs considered in this paper are plane and given with their embedding.
We identify any face $F$ of a plane digraph $D$ with its \emph{boundary}, defined as the cyclic list of labelled vertices $(v_1,...,v_r)$ following the closed walk around $F$. When $D$ is not $3$-connected, note that the same vertex may appear multiple times along the boundary.
For any digraph $D$ and any $U \subseteq V$, $D[U]$ is the subdigraph induced by $U$.

\paragraph{Connectivity.}
Throughout the paper, we use \say{strong} and \say{strongly connected} interchangeably, and often refer to directed paths as \emph{dipaths}.
A dicut in a digraph $D = (V,A)$ is a bipartition $(Z,V \setminus Z)$ such that all arcs across are directed from $Z$ to $V \setminus Z$.
A strong component is \emph{trivial} if it consists of a single vertex.
A strong component $C$, in particular a vertex, is a {\em source} if $(C,V \setminus C)$ is a dicut, and a {\em sink} if $(V \setminus C,C)$ is. 
Sources and sinks of $D$ form the set $\cT(D)$ of {\em terminal components}.
Then, a \emph{condensation} of $D$ is a (plane) digraph obtained by identifying each strong component into a single vertex, and removing copies of the same arc.

\paragraph{Completions and their restrictions.}
For a plane digraph $D=(V,A)$, and any $Y \subseteq V(D)^2$ given with an embedding, both $D+Y = (V,A \cup Y)$ and $D-Y = (V, A \setminus Y)$ are plane digraphs respecting the embedding of $D$.
A \emph{completion} of $D$ is a subset $X$ of $V(D)^2$, given with an embedding, such that $D+X$ is plane and oriented (directed in \Cref{sec_dirPSCA}).
When $D+X$ is moreover strong, $X$ is a \emph{solution} of $D$.
Given a face $F$, a completion of $F$ is a completion of $D$ such that $X' \subseteq (V(F))^2,$ and all arcs of $X'$ are embedded in $F.$
For any completion $X$, and any face $F$ of $D$, the \emph{restriction} $X_F$ of $X$ to $F$ as the subset of $X$ embedded in $F$.
For a subwalk $I$ of $F$, the \emph{restriction} $X_I$ of $X$ to $I$ is the set of arcs in $X$ having at least one endpoint on $I$.

\section{Fixed-parameter tractability when allowing digons}\label{sec_dirPSCA}

In this section, we describe an \FPT algorithm for \dirPSCA, taking instances $(D,k)$ with $D$ a plane digraph, and allowing digons in the solution.
Throughout this section, we thus consider \emph{completions} (and \emph{solutions}) as some $X$ of $V(D)^2$ such that $D+X$ is a plane \textsl{di}graph.
We begin by describing the structure of our instance, local terminals, as well as simple and alternating faces in~\Cref{ssec_dir_structure_instance}.
Then, we show the existence of supported solutions in~\Cref{ssec_dir_struct-sol}. We bound the number of possible supported completions across all alternating faces in~\Cref{ssec_dir-alternating-faces}, allowing us to branch over them.
Then, in~\Cref{ssec_dir_FPT}, we finish the description of the \FPT algorithm, which reduces to computing a minimum (supported) solution within simple faces.

\subsection{Structure of the instance}\label{ssec_dir_structure_instance}

Our first step is to reduce any instance of \dirPSCA to (polynomially many) acyclic ones.
\begin{restatable}{lemma}{reducDAG}\label{lem_reduc-DAG}
    \dirPSCA admits a polynomial-time Turing reduction to the same problem over acyclic instances.
\end{restatable}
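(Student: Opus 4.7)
The plan is to reduce each instance $(D,k)$ of \dirPSCA to the single acyclic instance $(D',k)$, where $D'$ is the strong-component condensation of $D$, embedded as inherited from $D$ and with any parallel inter-component arcs identified so that $D'$ remains a simple plane DAG. This is a many-to-one polynomial-time reduction, and a fortiori a Turing one; the heart of the argument is the equivalence $(D,k)$ is a YES-instance iff $(D',k)$ is.

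For the forward direction, I would take any solution $X$ of $(D,k)$ and project it to $X' \eqdef \{(C(u),C(v)) : (u,v) \in X,\, C(u) \neq C(v)\}$, where $C(z)$ denotes the strong component of $D$ containing $z$. Arcs of $X$ inside a single strong component are useless for strong connectivity (that component is already strong), and several arcs of $X$ projecting to the same inter-component pair are redundant; hence without loss of generality $|X'| = |X| \leq k$. Since $D+X$ is plane, strongly connected, and directed, its contraction $D'+X'$ inherits planarity, reachability, and simplicity, so $X'$ witnesses that $(D',k)$ is YES.

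For the reverse direction, I would lift any solution $X'$ of $(D',k)$ to a solution of equal size for $(D,k)$. For each arc $(C_i,C_j) \in X'$ embedded in a face $F'$ of $D'$, I would choose a face $F$ of $D$ contracting to $F'$ together with vertices $u \in V(C_i) \cap V(F)$ and $v \in V(C_j) \cap V(F)$, and add $(u,v)$ to $X$ embedded in $F$. Since $(C_i,C_j) \notin A(D')$, no arc of $A(D)$ runs from $V(C_i)$ to $V(C_j)$, so simplicity is preserved; and as the cyclic order of components along $\partial F'$ matches the cyclic order of their vertex representatives along $\partial F$, distinct lifts into the same face do not cross.

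The main technical obstacle is handling the identification of parallel arcs when passing to $D'$: when $D$ contains several arcs from $C_i$ to $C_j$, the intermediate \say{lens} faces of $D$ are collapsed in $D'$ and so not directly available for placing lifted arcs. This turns out to be benign, because any arc placed in such a lens face running from $C_i$ to $C_j$ would duplicate an existing arc of $D$, whereas a reverse arc from $C_j$ to $C_i$ can always be routed through another face of $D$ that remains visible in $D'$. Formalising this face correspondence, particularly when a strong component is a cut vertex of the underlying graph and hence appears several times on a face boundary, will be the central technicality of the proof.
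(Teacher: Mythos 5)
There is a genuine gap in the reverse (lifting) direction, and it is precisely the issue the lemma's phrasing as a \emph{Turing} reduction is hinting at. When a strong component $C$ of $D$ contains a cycle that encloses other vertices in the embedding, contracting $C$ produces a \emph{loop} that topologically separates the inside of that cycle from the outside. Your construction of $D'$ silently discards these loops (you only discuss parallel inter-component arcs), which merges faces of $D$ that lie on opposite sides of the cycle; a solution arc of $D'$ embedded in such a merged face may then have no legal realization in $D$. Concretely: let $C$ be a plane directed cycle, put a source $a$ inside it with the single arc $(a,c_1)$, and a sink $b$ outside it with the single arc $(c_1,b)$. Any arc entering $a$ must be drawn inside the disk bounded by $C$ and any arc leaving $b$ must be drawn outside it, so $(D,1)$ is a \no-instance. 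But your $D'$ is the path $a\to c\to b$, and the single arc $(b,a)$ makes it strong, so $(D',1)$ is a \yes-instance. Hence the proposed many-one reduction to a single flattened condensation is unsound. (The "lens faces of parallel arcs" issue you flag is comparatively benign; the loops are the real obstruction.)

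The paper's proof keeps the condensation as a plane \emph{multidigraph with loops} (\Cref{lem_reduc-almost-DAGs}), where the equivalence of solutions does hold arc-for-arc, and then eliminates loops separately: each loop $\ell=(v,v)$ splits the instance into two independent subinstances $D_{in},D_{out}$ sharing only $v$, a solution for $D$ is exactly a union of solutions for the two pieces, and one must search over all ways of distributing the budget $k$ among the resulting loopless pieces. This budget distribution over up to $|A(D)|$ acyclic subinstances is why the reduction is Turing rather than many-one. To repair your argument you would need to replicate this decomposition (or otherwise prove that no minimum solution ever wants to connect the two sides of a separating loop with a single arc, which the counterexample above shows is false).
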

\noindent
Such a reduction can easily be obtained by contracting the arcs belonging to a strong component until we reach a plane condensation of the digraph at hand. However, this involves some non-essential technicalities when contracting loops, and we delay the proof to~\Cref{apdx:reduc-DAG}.
In all of the following, we may thus assume $(D,k)$ is an acyclic instance of \dirPSCA.

\paragraph{Local terminals and face types.} 
For any face $F$, a \textsl{labelled} vertex $v_i$ of $F$ is a {\em local source} if its incident arcs around the boundary are both outgoing, and a {\em local sink} if they are both incoming.
Local sources and sinks form the \emph{local terminals} of $F$, see \Cref{fig:simple-alt-directed}.
For any face $F,$ local sources and local sinks must appear in alternation along the boundary.
Then, any consecutive local terminals are connected by a directed walk, which—due to acyclicity—is in fact a directed path.
Therefore, the number of local sources in $F$ equals the number of its local sinks, and we define $\lt(F)$ to be the (even) number of local terminals of $F$.
Note that $lt(F)$ is always non-zero for acyclic instances, for otherwise the boundary of $F$ would form a directed cycle.
For the remainder of this section, we say that $F$ is a {\em simple face} if it contains exactly one local source and one local sink; otherwise, it is called an {\em alternating face}, see \Cref{fig:simple-alt-directed} for a depiction.

\begin{figure}
    \centering
    \includegraphics[scale=1.2,page=2]{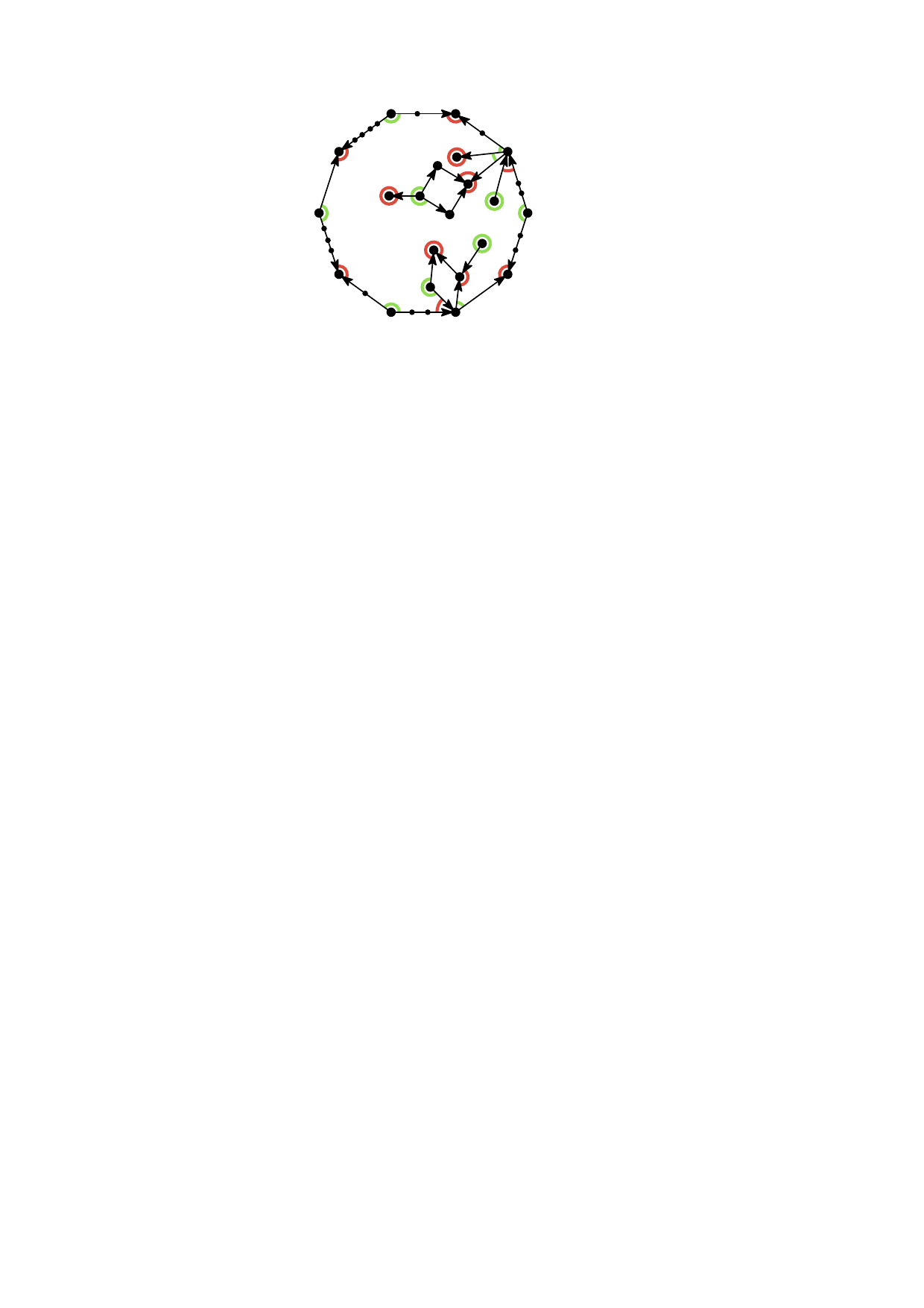}
    \caption{
        A simple face, with two local terminals and  an alternating face with $22$ local terminals. Local sources are shown as green angles, while local sinks are the red angles. Note that the same vertex can appear in both local sinks, local sources and even non-terminal angles.}
        \label{fig:simple-alt-directed}
\end{figure}

\subsection{Existence of supported solutions}\label{ssec_dir_struct-sol}

Throughout the section, a completion of a face $F$ is \emph{supported} if all of its arcs lie between local terminals of $F$.
Then, a completion (or solution) $X$ of $D$ is {\em supported} if the restriction of $X$ to each face is supported.
The following lemma shows that any solution reconfigures into a supported one by a sequence of appropriate \say{shifts} of its endpoints towards local terminals.
\begin{lemma}\label{lem_supported-sol-dir-psca}    
If an acyclic instance of \dirPSCA admits a solution of size $k,$ then it admits a supported solution of size at most $k.$
\end{lemma}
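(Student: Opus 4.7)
The plan is a shifting argument: start from any solution $X$ of size at most $k$ and iteratively modify it into a supported solution of size at most $k$, while preserving strong connectivity, directedness, and planarity at every step.

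The atomic operation is a local shift. For any face $F$ and arc $(u,v) \in X_F$, if $u$ is not a local terminal of $F$, then by definition one of $u$'s two boundary arcs is incoming and the other outgoing. Following outgoing boundary arcs from $u$ traces a directed walk that, by acyclicity, is a dipath ending at a unique local sink of $F$, denoted $t_u$. I would replace $(u,v)$ by $(t_u, v)$, yielding $X'$. The symmetric operation, applied when $v$ is non-terminal, replaces $(u,v)$ by $(u, s_v)$, where $s_v$ is the local source from which $v$ is reached by a boundary dipath of $F$. Strong connectivity of $D + X'$ is preserved because any dipath that used $(u,v)$ can be rerouted through the boundary dipath $u \to t_u$ in $D$, followed by the new arc $(t_u, v) \in X'$. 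Directedness is preserved since whenever $(t_u, v)$ already belongs to $D + X$, we simply omit it from $X'$ and strictly decrease $|X|$; otherwise no parallel arc is introduced, and digons are allowed in \dirPSCA.

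The main obstacle is the planarity of the shift: the new arc $(t_u, v)$ must be drawn inside $F$ without crossing any remaining chord of $X_F$. I would address this by a minimality argument. Across all faces, pick a non-terminal chord endpoint minimizing the boundary distance to its target local terminal, and perform its shift first. By minimality, no other chord of $X_F$ has a non-terminal endpoint of the same kind strictly between $u$ and $t_u$ on the boundary dipath. Combined with the non-crossing structure of $X_F$ as a chord diagram inside $F$, one checks that any chord of $X_F$ either lies entirely on one side of both $(u,v)$ and $(t_u, v)$, or straddles $t_u$ in a way that can be resolved by first shifting its own endpoints. Hence $(t_u, v)$ can indeed be embedded in $F$ without crossings.

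Termination follows from the potential $\Phi(X) = \sum_{(u,v) \in X}\bigl(\mathbf{1}[u \text{ is non-terminal in its face}] + \mathbf{1}[v \text{ is non-terminal in its face}]\bigr)$, bounded initially by $2|X| \leq 2k$. Each shift strictly decreases either $|X|$ or $\Phi$, so after finitely many steps we obtain a supported solution of size at most $k$. The most delicate point is the planarity claim at each shift; making it rigorous requires careful exploitation of the non-crossing chord structure of $X_F$ inside each face, and is where I expect the proof to spend most of its effort.
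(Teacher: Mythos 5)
Your rerouting argument for strong connectivity is sound (moving a tail forward to the sink it reaches, or a head backward to the source that reaches it, always preserves reachability), and your potential function handles termination. The gap is exactly where you place it --- planarity --- but it is not a missing detail: the approach as stated fails, because your shift targets are \emph{deterministic} (every tail on a source-to-sink dipath $P$ must end at the sink of $P$, every head at the source of $P$), and the resulting target configuration can be inherently non-planar. Take a face whose boundary is, in cyclic order, $s,u,x,t,c,v$, where $P=(s,u,x,t)$ is a dipath from local source $s$ to local sink $t$, and suppose the solution contains the two non-crossing chords $(u,v)$ and $(c,x)$ with $v$ and $c$ already local terminals. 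Your rules send these to $(t,v)$ and $(c,s)$, which cross; moreover each single shift already crosses the other, not yet shifted, chord, so no ordering --- in particular not your minimality order --- even realizes an intermediate state. (Your escape clause only covers chords ``of the same kind'' or ones straddling $t_u$; a \emph{head} lying strictly between $u$ and $t_u$, as $x$ does here, is covered by neither.) A supported solution in this configuration must place both $P$-endpoints at the \emph{same} terminal, which your rule cannot produce.

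The paper takes the opposite trade-off: it moves the \emph{entire fan} $X_v$ of solution arcs incident to an internal vertex $v$ of $P$, and only to the nearest other vertex $u$ or $w$ of $P$ incident to the solution, so planarity is immediate (there is nothing to cross in between, and fans at distinct vertices eventually merge, which is what resolves configurations like the one above). The price is that connectivity is no longer free: the direction of each move is chosen by analysing whether the strong component of $v$ in $D+X-X_v$ is a source, a sink, or neither, using the fact that $X_v$ links every sink of that digraph to every source. To salvage your scheme you would need a comparable freedom in choosing each endpoint's destination terminal, at which point you would essentially be reproving that case analysis.
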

\begin{proof}
    Consider an instance $(D,k)$, with $D$ acyclic, and assume that it admits a solution $X$ of size at most $k$.
    To reconfigure $X$ into a supported solution, we iterate over all faces $F$, and reconfigure $X_F$ in $X$ into a supported completion, while maintaining that $X$ is a solution.
    Inside each face $F$, we successively consider any subwalk $P$ from a local source $s$ to a consecutive local sink $t$ (recall $P$ is then a directed path).
    We will show how to reconfigure the endpoints of the restriction $X_P$ belonging to $P$, and only those, to be incident only to $s$ and $t$.
    It suffices to show this last step, which iterated as above for each pair of consecutive local terminals, and each face, will achieve to reconfigure $X$ into a supported solution.
    In the following, let us consider such a $P$ between $s$ and $t$, and show the reconfiguration of $X_P$.
    
    We first consider the case where $X_P$ contains an arc with both endpoints on $P$. We modify the endpoints of $X_P$ on $P$ as follows.
    Remove all arcs with both endpoints on $P$ and replace them with (a single) $(t,s)$. Then for all other arcs of $X_P$, substitute all their endpoints in $P$ with $s$.
    It is easy to see that these arcs can be modified while preserving a plane embedding for $D+X$.
    The addition of $(t,s)$ ensures that $V(P)$ is now strongly connected in $D+X$.
    Then, consider any arc $(x,p)$ in the initial $X_P$, with $p \in P$ and $x \notin P$. Since its modification yields $(x,s)$, note that the path from $x$ to $p$ is preserved after the reconfiguration. Indeed, we may follow $(x,s)$ then any path towards $p$ in $V(P)$. This yields that $D+X$ is still strongly connected, and concludes the proof for this case.

    In the remainder of the proof, we may assume all arcs in $X_P$ have exactly one endpoint in $P$.
    Take $v \in V(P) \setminus \{s,t\}$ to be any vertex incident to arcs of $X_P$ and let $X_v \subseteq X_P$ be the subset of those arcs.
    Informally, our strategy consists in showing that we can always \say{shift} their $v$ endpoint \say{towards} either $s$ or $t$.
    Consider $v$, and let $u$ (resp. $w$) be the previous (resp. next) vertex incident to $X_P$ when following $P$.
    When this is not defined, we let $u=s$ (resp. $w=t$).
    We define $X_u$ (resp. $X_w$) to be the subset of arcs obtained from $X_v$ by substituting every $v$ endpoint of arcs in $X_v$ with $u$ (resp. with $w$), see~\Cref{fig:pushing-directed}.
    We will show that $X_v$ can be replaced with either $X_u$ or $X_w$ in $X$ while maintaining a solution.
    Observe first that by definition of $u,w$, and since the only endpoint of $X_v$ on $P$ is $v$, both $D + X - X_v + X_u$ and  $D + X - X_v + X_w$ are plane. Therefore, letting $D' = D + X - X_v$, it suffices to show that either $D'+X_u$ or $D'+X_w$ is strongly connected.

    First, observe that $X_v$ forms an oriented star incident to all terminal components of $D'$.
    In $D'$, let $Y'_v$ be the strong component of containing $v$,  $(S'_i)_i$ be the remaining source components, and $(T'_j)_j$ be the remaining sink components.
    Since $D' + X_v$ is strong, $X_v$ contains an arc from $v$ to every $S'_i$, and from every $T'_j$ to $v$.
    In particular, we have the following property:
    \begin{equation}\label{eq:sca}
        \text{$X_v$ induces a directed path from every sink to every source of $D'$}
    \end{equation}
    It is easy to see that any plane completion of $D'$ satisfying~\cref{eq:sca} (in place of $X_v$) is also a solution to $D'$. We show that either $X_u$ or $X_w$ must satisfy~\cref{eq:sca}.
    By definition, both $X_u$ and $X_w$ form stars which induce a directed path of length two from every $T'_j$ to every $S'_i$.
    In particular, if $Y'_v$ is not a terminal component, all terminal components of $D'$ are of the form $(S'_i)_i$ or $(T'_j)_j$, meaning both $X_u$ and $X_w$  satisfy~\cref{eq:sca}. We then update $X$ to $X - X_v + X_u$, which maintains a solution for $D$.
    Now, if $Y'_v$ is a source component of $D'$, the existence of the directed subpath $(s, \ldots ,u \ldots ,v)$ of $P$ implies $u \in Y'_v$ as well. Then, $X_u$ contains arcs from all sinks of $D'$, $(T'_j)_j$, to $Y'_v$, so $X_u$ satisfies~\cref{eq:sca}, so updating $X$ to $X - X_v + X_u$ maintains a solution for $D$.
    See~\Cref{fig:pushing-directed} for an illustration of this case.
    Symmetrically, when $Y'_v$ is a sink component, the subpath $(v, \ldots, w , \ldots t)$ yields that $w \in Y'_v$, and updating $X$ to $X - X_v + X_w$ yields a solution for $D$.

    \begin{figure}[h]
        \centering
        \includegraphics[scale=1]{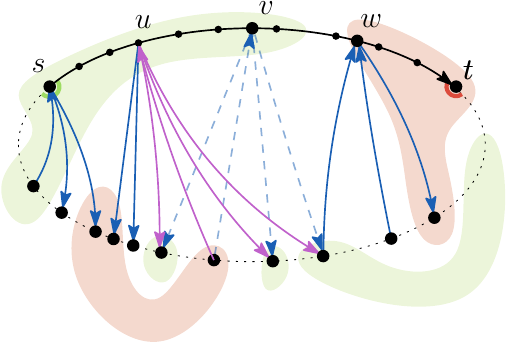}
        \caption{A directed path $P$ from a local source $s$ to a local sink $t$ of a face, with arcs of a solution incident to it shown in blue. Removing the arcs incident to vertex $v$ yields several source and sink components (in green and red), one of which is a source containing both $u$ and $v$. Shifting their endpoints to $u,$ as shown in purple, yields a solution incident to fewer internal vertices of $P.$}\label{fig:pushing-directed}
    \end{figure}

    As long as there exist endpoints of $X_P$ not incident to $s$ or $t$, we reiterate the operation described in the last paragraph.
    This process terminates, because the number of endpoints of $X_P$ on $P$ other than $s$ or $t$ strictly decreases. 
    Eventually, all endpoints of $X_P$ on $P$ (and only those) will have been reconfigured to either $s$ or $t$, while maintaining that $X$ is a solution. This achieves to show the reconfiguration of $X_P$, and therefore concludes the proof.
\end{proof}
 

\subsection{Bounding supported completions in alternating faces}\label{ssec_dir-alternating-faces}

From now on, \Cref{lem_supported-sol-dir-psca} allows us to narrow our search only to supported solutions, which we do by considering their restrictions, \emph{supported completions} within each face.
In this subsection, we bound the number of supported completions of size $k$ across all alternating faces, as a function of $k$.
The first step is to obtain such a bound for a single face, first in terms of the number of its local terminals.
\begin{lemma}\label{lem_guess-alt-dir}
For any face $F$ of $D$, there exist at most $2^{O(\lt(F))}$ supported completions of $F$, and those can be generated within $2^{O(\lt(F))}$-time.   
\end{lemma}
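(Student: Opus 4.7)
Let $\ell = \lt(F)$. The plan is to view $F$ as an open disk whose boundary carries the $\ell$ local terminals as distinct labeled points in their fixed cyclic order (treating each local terminal as an angle, so that the argument goes through uniformly even when the boundary walk of $F$ is not a simple cycle). Any arc of a supported completion $X'$ then becomes a chord of this disk between two such labeled points, and all chords are pairwise non-crossing. I will decompose $X'$ into two independent pieces of data: (i) its underlying undirected simple graph $H$ on the $\ell$ points, obtained by collapsing every digon to a single edge, and (ii) for each edge of $H$, a choice in $\{\text{forward},\text{backward},\text{digon}\}$ recording which arcs are actually present. I then bound each piece by $2^{O(\ell)}$ and multiply.

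For (i), I will use the classical fact that the number of non-crossing graphs on $\ell$ points in convex position is $2^{O(\ell)}$. A short self-contained argument suffices: any such $H$ is a subgraph of some triangulation of the $\ell$-gon, the number of triangulations equals the Catalan number $C_{\ell-2} \leq 4^{\ell}$, each triangulation has exactly $2\ell-3$ edges (the $\ell$ boundary edges of the $\ell$-gon plus $\ell-3$ diagonals), and each such edge may be kept or dropped, giving $4^{\ell}\cdot 2^{2\ell-3}=2^{O(\ell)}$ choices (with harmless overcounting, since an upper bound suffices). For (ii), outerplanarity gives $|E(H)|\leq 2\ell-3$, so there are at most $3^{2\ell-3}=2^{O(\ell)}$ orientation patterns. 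Multiplying the two estimates yields the desired $2^{O(\ell)}$ count of supported completions.

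Generation follows the same route: enumerate the $C_{\ell-2}$ triangulations of the $\ell$-gon by the standard recursion (splitting on the triangle incident to one fixed boundary edge), iterate over all subsets of their $2\ell-3$ edges and over all $3^{|E(H)|}$ orientation patterns, and emit each candidate $X'$ after a polynomial-time check that $D+X'$ is a plane digraph (in particular, that no arc of $X'$ duplicates a boundary arc already in $D$). Duplicates produced by distinct triangulations can be removed by a canonicalization pass in $2^{O(\ell)}\cdot\mathrm{poly}(\ell)$ time, or simply tolerated when the caller consumes the list as a set of candidates to probe. The only mild obstacle here is bookkeeping around the angle-based definition of local terminals; once that is treated carefully, the bound is an immediate consequence of classical non-crossing/outerplanar enumeration estimates, so no further technical difficulty is expected.
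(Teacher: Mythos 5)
Your proposal is correct and follows essentially the same route as the paper's proof: place the $\lt(F)$ local terminals as labelled points in convex position, bound the non-crossing (outerplanar) structures via triangulations of the polygon and the Catalan number $C_{\lt(F)-2}=2^{O(\lt(F))}$, and multiply by a constant number of choices (non-arc, arc in either direction, or digon) per triangulation edge. Your factoring of the per-edge choice into ``keep/drop'' times ``orientation pattern'' is just a repackaging of the paper's single $4$-way choice, and the enumeration argument is likewise the same up to the choice of triangulation-listing subroutine.
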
 
\begin{proof}
Let $F$ be a face of $D$, and let $\lt=\lt(F)$.
Note that the embedding of any supported completion $X$ of $F$ is fully determined by the local terminals (labelled vertices of $F$) to which it is incident.
Therefore, our goal is to enumerate all possible labelled outerplanar digraphs between the local terminals.
For our purposes, we may consider local terminals as distinct vertices on a cyclic face, up to contracting them afterwards. Each digraph we enumerate can thus be seen as a labelled directed triangulation on $\lt$ vertices.
It is well known that the number of labelled triangulations of a polygon with $\lt$ angles corresponds to the Catalan number $C_{\lt-2},$ which is $2^{O(\lt)}$, see Walkup~\cite{walkupNumberPlaneTrees1972}.
These triangulations can also be enumerated in $2^{O(\lt)}$ time using the algorithm of Bespamyatnikh~\cite{bespamyatnikhEfficientAlgorithmEnumeration2002}.
Then, each triangulation contains $2 \lt -3$ edges, and the number of ways to choose either a non-arc, an arc (in either direction) or a digon is bounded by $4^{2\lt-3} = 2^{O(\lt)}$.
Combining the above, the number of labelled outerplanar digraphs, thus of supported completions of $F$, is bounded by $2^{O(\lt)}$, and their enumeration takes $2^{O(\lt)}$-time.
\end{proof} 

The following bounds the number of local terminals across all alternating faces, thus by~\Cref{lem_guess-alt-dir} the number of their supported completions.
\begin{restatable}{lemma}{lemboundsumlt}\label{lem_bound-lt-alt}
    Every acyclic instance $(D,k)$ of \dirPSCA is either a \no-instance, or satisfies: 
    $$\sum_{F\in \AF(D)} \lt(F)< 8k,$$ 
    where $\AF(D)$ is the set of alternating faces of $D$.
\end{restatable}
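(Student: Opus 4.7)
The plan is to assume $(D,k)$ admits a solution $X$ of size at most $k$ (otherwise we are done) and derive the bound from two ingredients: the classical \tsc{SCA}-type lower bound on $|X|$, and an Euler-type angle-counting inequality for acyclic plane digraphs. Since $D+X$ is strongly connected, every source component of $D$ must receive an incoming arc from $X$ and every sink must emit an outgoing one; since each arc of $X$ contributes to at most one source and one sink, this gives $\sigma(D), \tau(D) \leq k$, and hence $\sigma+\tau \leq 2k$.

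The remainder reduces to the structural inequality $\sum_{F \in \AF(D)} \lt(F) \leq 4(\sigma+\tau) - 8$ for every acyclic plane connected digraph, which combined with $\sigma+\tau \leq 2k$ yields the desired $\sum_{F \in \AF(D)} \lt(F) \leq 8k-8 < 8k$. To establish this, count local source angles across all faces and denote the total by $\Lambda$. Since $D$ is acyclic, each face contains equally many local sources and local sinks, so $\sum_F \lt(F) = 2\Lambda$. A vertex-wise computation, combined with the identity $\sum_{v \in V_{\mathsf{src}}} \deg(v) + \sum_{v \in V_{\mathsf{mix}}} \delta^+(v) = m$, gives the compact expression $\Lambda = m - \sum_{v \in V_{\mathsf{mix}}} r(v)$, where $V_{\mathsf{mix}}$ is the set of mixed vertices (neither source nor sink) and $r(v) \geq 1$ counts the maximal runs of out-arcs in $v$'s cyclic order. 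Subtracting the contribution of the $s$ simple faces (each contributing exactly $2$ to $\sum_F \lt(F)$) and applying Euler's formula $s + |\AF(D)| = m-n+2$ yields
$$\sum_{F \in \AF(D)} \lt(F) = 2(n-2) + 2|\AF(D)| - 2\sum_{v \in V_{\mathsf{mix}}} r(v) \leq 2(\sigma+\tau)-4 + 2|\AF(D)|,$$
where the last inequality uses $|V_{\mathsf{mix}}| = n-\sigma-\tau$ and $r(v) \geq 1$.

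The proof is closed by the topological fact $|\AF(D)| \leq \sigma+\tau-2$ for every acyclic plane connected digraph: in the baseline case $\sigma = \tau = 1$ the graph admits an $st$-embedding in which all faces are simple, so $|\AF(D)| = 0$, and each additional source or sink beyond this baseline can create at most one new alternating face. Plugging this bound into the above yields the claimed $4(\sigma+\tau) - 8$, hence $\sum_{F \in \AF(D)} \lt(F) \leq 8k-8 < 8k$. The main obstacle is rigorously establishing the topological fact $|\AF(D)| \leq \sigma + \tau - 2$ in the presence of non-$2$-connected structure (where the same vertex can appear several times along a face boundary); the natural inductive attempt, augmenting $D$ toward a plane $st$-graph one arc at a time, has to be done carefully because a single arc addition can split an alternating face into two alternating sub-faces. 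The final bound is essentially tight, witnessed by bipartite planar quadrangulations oriented from one color class to the other, for which $\sum_{F \in \AF(D)} \lt(F)$ saturates $4(\sigma+\tau)-8 = 8k-8$.
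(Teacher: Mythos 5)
Your overall architecture matches the paper's: first argue that a \no-instance is excluded only if the number of terminal components is at most $2k$ (your $\sigma+\tau\le 2k$), then prove a purely structural inequality $\sum_{F\in\AF(D)}\lt(F)\le 4(\sigma+\tau)-8$ by angle counting plus Euler's formula. Your angle count is correct and is essentially the dual of the paper's: you count local-source angles via out-runs $r(v)$ at mixed vertices, whereas the paper counts non-local-terminal angles $\overline{\lt}(v)$ at each vertex; both yield the same intermediate inequality $\sum_{F\in\AF(D)}\lt(F)\le 2(\sigma+\tau)-4+2|\AF(D)|$.

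The genuine gap is the last step. You invoke the ``topological fact'' $|\AF(D)|\le\sigma+\tau-2$ and propose to prove it by (a) a baseline case in which a single-source, single-sink digraph ``admits an $st$-embedding in which all faces are simple'' and (b) an induction adding one terminal at a time. Neither part works as stated: the embedding of $D$ is \emph{fixed} (the problem is \dirPSCA, not its planar variant), so you may not pass to an $st$-embedding — you would need that all faces are simple in the \emph{given} embedding; and you yourself flag that the inductive step is unresolved in the non-$2$-connected case. Fortunately, no separate topological argument is needed: the bound on $|\AF(D)|$ falls out of the inequality you already derived. Since every alternating face satisfies $\lt(F)\ge 4$, the left-hand side of your inequality is at least $4|\AF(D)|$, so $4|\AF(D)|\le 2(\sigma+\tau)-4+2|\AF(D)|$, i.e.\ $|\AF(D)|\le\sigma+\tau-2$. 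Substituting back gives $\sum_{F\in\AF(D)}\lt(F)\le 4(\sigma+\tau)-8\le 8k-8<8k$, which is exactly how the paper closes the argument. With that replacement your proof is complete and equivalent to the paper's.
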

\noindent
The proof of~\Cref{lem_bound-lt-alt} is delayed to~\Cref{ssec_dir_bound_local}, but is easily derived from results by Guattery and Miller~\cite{guatteryContractionProcedurePlanar1992}.
Combining \Cref{lem_guess-alt-dir} and \Cref{lem_bound-lt-alt}, we derive the following bound on possible supported completions across alternating faces, and their enumeration in \FPT time.
\begin{corollary}\label{cor_guess-all-alt-dir}
    Every acyclic instance $(D,k)$ of \dirPSCA is either a \no-instance, or has $2^{O(k)}$ different supported completions within alternating faces of $D.$
    Furthermore, this can be decided and these completions can be generated within $2^{O(k)}$-time.
\end{corollary}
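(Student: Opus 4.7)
My plan is to obtain this corollary as an immediate consequence of the two preceding lemmas, by observing that a supported completion supported only in alternating faces is exactly a tuple of independent per-face choices.

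More precisely, I would first compute $\lt(F)$ for every face of $D$ in polynomial time by scanning each boundary once. By \Cref{lem_bound-lt-alt}, either $\sum_{F \in \AF(D)} \lt(F) \geq 8k$, in which case $(D,k)$ is a \no-instance and we stop, or this sum is strictly less than $8k$. This dichotomy constitutes the \emph{decision} step of the corollary.

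In the second case, any supported completion $X$ of $D$ whose restriction to every simple face is empty is determined by the tuple $(X_F)_{F \in \AF(D)}$, where each $X_F$ is a supported completion of the individual face $F$. Applying \Cref{lem_guess-alt-dir} to each alternating face, the total count is bounded by
$$
\prod_{F \in \AF(D)} 2^{O(\lt(F))} \;=\; 2^{O\bigl(\sum_{F \in \AF(D)} \lt(F)\bigr)} \;=\; 2^{O(k)}.
$$
For the generation, I would enumerate, for each $F \in \AF(D)$ separately, the list $\mathcal{L}_F$ of its supported completions, in time $2^{O(\lt(F))}$ via \Cref{lem_guess-alt-dir}. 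I would then traverse the Cartesian product $\prod_F \mathcal{L}_F$ to output each tuple as a candidate joint completion, with a polynomial-time validity check per tuple (to handle the edge case that two per-face completions in adjacent faces might use the same ordered pair of endpoints, which would violate the directed/plane requirements). The total running time is $2^{O(k)} \cdot n^{O(1)}$.

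There is no genuine obstacle here: the whole statement follows by multiplying the per-face bound of \Cref{lem_guess-alt-dir} with the global bound of \Cref{lem_bound-lt-alt}. The only minor subtlety is that the Cartesian product gives all candidate joint completions while a few may need to be discarded as invalid, but this is absorbed in the polynomial factor and does not affect the $2^{O(k)}$ asymptotic.
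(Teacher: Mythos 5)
Your proposal is correct and matches the paper's intended derivation: the paper obtains this corollary precisely by combining the dichotomy of \Cref{lem_bound-lt-alt} (either a \no-instance or $\sum_{F\in\AF(D)}\lt(F)<8k$) with the per-face bound and enumeration of \Cref{lem_guess-alt-dir}, multiplying over the alternating faces. Your extra validity check on the Cartesian product is a harmless refinement absorbed in the polynomial factor.
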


\subsection{Computing the rest of the solution in simple faces}\label{ssec_dir_FPT}

With \Cref{cor_guess-all-alt-dir} in hand, we may branch over the $2^{O(k)}$ possible supported completions in the alternating faces.
Then, if a supported solution $X$ to $(D,k)$ exists, we may assume we are in a branch that has correctly computed its restriction to the set of alternating faces $X_\AF =\cup_{F\in \AF} X_F$.
Since we know $X$ is a solution for $D$, $X\setminus X_{\AF}$ is a supported solution of $(D+X_{\AF},k-|X_{\AF}|)$ where all arcs are completed in simple faces.
Therefore, instead of recovering $X\setminus X_{\AF}$ exactly, we may look for \textsl{any} supported solution to $(D+X_{\AF},k-|X_{\AF}|)$ within (only) simple faces of $D$.

Before computing such a solution, the crucial observation is that any simple face $F$ admits exactly one non-empty supported completion, which is the arc from its local sink $t$ to its local source $s$. Note indeed that this arc always renders $V(F)$ strongly connected, which is the best outcome for a completion within $F$.
Then, to solve $(D+X_{\AF},k-|X_{\AF}|)$ in the simple faces, it suffices to compute the minimal number of arcs from local sinks to (corresponding) local sources needed to achieve strong connectivity.
The most straightforward way to solve this is by a reduction to \tsc{Weighted-SCA}. In this problem, we are given a digraph $D,$ a weight function $w : V(D)^2 \rightarrow \mathbb{R}^+$ for all the possible new arcs, and values $k,\alpha.$ Then, an instance is positive if there exists a set $X\subseteq V(D)^2$ of at most $k$ arcs and total weight at most $\alpha$ such that $D+X$ is strong.

Now, we let the input digraph to \tsc{Weighted-SCA} be $D+X_{\AF}$, and let the weight function equal $k+1$ for all possible arcs, except for those $(t,s)$ where $t$ is the local sink and $s$ is the local source of some simple face $F,$ in which case the weight is $1.$ 
Then, setting both the maximum size and the total weight of a solution to $k-|X_{\AF}|$, one obtains an instance of \tsc{Weighted-SCA} that is clearly equivalent to our problem.
This instance can be solved in $2^{O(k \log k)}n^{O(1)}$, using the algorithm of Klinkby, Misra and Saurabh~\cite{klinkbyStrongConnectivityAugmentation2021}. In turn, this gives a $2^{O(k \log k)}n^{O(1)}$ \FPT algorithm for \dirPSCA.
Let us also note that the computation within simple faces reduces to the (polynomial) \tsc{Minimum Dijoin} problem, by simply adding the arc $(s,t)$ in each simple face, which yields a $2^{O(k)} n^{O(1)}$ algorithm.

\subsection{Adapting the strategy to \spsca}
\label{ssec_adapt}

We highlight here the main obstacles in adapting the algorithm of \dirPSCA to \spsca.

\paragraph{Local terminals in non-acyclic instances.}

The first hurdle one runs into when trying to generalize the approach used for~\dirPSCA is that instances of \sopsca are not equivalent under condensation.
Indeed contracting arcs may increase the solution size, or even prevent augmentability, see~\Cref{fig:noDAG}.
As a result, the local terminals we define for \spsca will have size possibly linear in $n$ (see~\Cref{ssec_structure_instance}), complicating the definition of supported solutions.

\begin{figure}[h]
    \centering
    \includegraphics{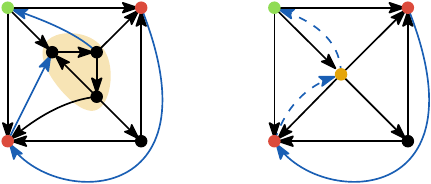}
    \caption{An instance of \spsca, with a non-trivial strong component in orange, and the oriented graph obtained by contracting it into a single vertex.
    The initial instance has a solution of size three, shown in blue, that is not preserved for the condensation, which is a negative instance.}
    \label{fig:noDAG}
\end{figure}

\paragraph{Supported solutions avoiding digons.}

In \spsca, the main difficulty lies in defining a notion of supported completions of a face that avoids digons, but is still constrained enough to allow for bounds in terms of $k$.
The obvious obstacle is that an arc of some solution $X$ cannot be reconfigured onto a pair of vertices that already form an arc in the face at hand.
More challenging is the fact that, when $D$ is not $3$-connected, there may exist arcs between vertices of $F$ that are embedded outside of $F$, but still obstruct the \say{shifting} of arcs in $X$ during reconfiguration.
While such shifts always end up in local terminals in \dirPSCA, this is not guaranteed to happen for instances of \spsca.
Our goal is still to mimic this intuition, by considering \say{leftmost} or \say{rightmost} shifts of $X$ toward local terminals of $F$, while avoiding digons (see~\Cref{ssec_shifts}).
We show that these shifts can only land on bounded number of \say{supports} (sets of angles), but at the cost of losing strong connectivity (see \Cref{ssec_stacks}).
Then, we show how to recover strong connectivity while staying within supports, 
allowing us to define supported solutions.
In turn, this restricts our search to a bounded number of supported completions in each face $F$ (see~\Cref{ssec_supported_bound}).

\paragraph{Branching in alternating faces.}

Although the size of each local terminal can now be very large, their number across alternating faces can still be bounded by a function of $k$ using the arguments used for \dirPSCA (see~\Cref{ssec_structure_instance}).
Then, thanks to our definition of supported completions, we may branch over a bounded number of possible completions across all alternating faces.

\paragraph{Guessing in simple faces.}
Having branched on a possible (supported) completions within alternating faces, we are left with the task of finding a minimum supported solution across all simple faces.
In \dirPSCA, this question was amenable to \tsc{Weighted SCA} thanks to each simple face admitting an unique supported completion: the arc from its local sink to its local source, which rendered $D[V(F)]$ strong. 
This is not achievable in~\spsca, where existing arcs from the local source to the local sink in $D$ may prevent the addition of an arc in the other direction.
In fact, a supported completion of a simple face may need multiple arcs, and may not even render $D[V(F)]$ strong by itself.
In particular, there can be multiple supported completions of $F$ that are pairwise incomparable in terms of connectivity.
The crucial step is to show that this number can still be bounded by a constant (see~\Cref{ssec_supported_bound}), but this prevents a direct reduction to \tsc{Weighted SCA} (or \tsc{Minimum Dijoin}). 
Indeed, we would need to choose one completion for each face in advance, to prevent the algorithm from producing solutions with crossings.
To overcome this, we guess a single allowed completion in each simple face uniformly at random, then show a reduction to the~\textsc{Minimum Dijoin} problem.
When the \say{correct} restriction of a supported solution has been guessed across the \emph{at most $k$} simple faces containing an arc of the solution, our reduction is guaranteed to find some minimum solution.
Since this probability is only a function of $k$, our algorithm can be derandomized in \FPT time using universal sets.

\section{FPT algorithm for \spsca}\label{sec_psca}

This section is dedicated to the proof of~\Cref{thm_fpt}, establishing the fixed-parameter tractability of \psca.
Recall that the technicalities of the current section are motivated by the obstacles discussed in~\Cref{ssec_adapt}. 
We begin with definitions specific to \spsca in~\Cref{ssec_definitions}, notably expressing completions relative to angles instead of labelled vertices.
In~\Cref{ssec_structure_instance}, we describe the structure of our instances: local terminals, intervals (subwalks) between them, and face types.
Our reconfigurations will operate on multicompletions, a (slight) relaxation of completions defined in~\Cref{ssec_multicompletions}.
In~\Cref{ssec_shifts}, we define the shifting operation, acting on endpoints of multicompletions, which will be our main reconfiguration tool.
Then, in~\Cref{ssec_stacks}, we introduce left and right stacks, corresponding to \say{maximally shifted} endpoints of a multicompletion on an interval, and show that these must lie on a bounded number of vertices, called supports.
While a solution cannot always be shifted as such, these operations allow us to define supported solutions in~\Cref{ssec_supported_existence}.
Then, we give bounds on the number of supported completions, in particular those across all alternating faces in~\Cref{ssec_supported_bound}.
We show the (derandomized) algorithm computing a minimum solution for simple faces in~\Cref{ssec_simple}, which we combine in~\Cref{ssec_fpt_psca} with a branching on alternating faces to yield our \FPT algorithm for \spsca.

\subsection{Definitions}\label{ssec_definitions}


\paragraph{Angles and boundaries.}
Given a face $F$, an \emph{angle} $w = \widehat{e v e'}$ of $F$ is an ordered triplet given by a vertex $v$ and its two incident arcs $e,e'$ on $F$.
When referring to $w$ as a vertex, we always mean $v$.  
Unless specified otherwise, each $w$ is ordered clockwise following $F$, and $e$ is the arc \emph{preceding} $v$ while $e'$ is the one \emph{following} $v$.
We then let $W(F)$ be the set of angles of $F$, and $W(D)$ be the set of angles across all faces of $D$.
We identify each face $F$ by its \emph{boundary}, which is now the circular list of angles $(w_1,w_2,...,w_r)$ following the closed walk of $F$ clockwise.
For simplicity of notation, we regularly identify the \textsl{labelled} vertex $v_i$ of the boundary with its \emph{corresponding angle} $w_i = \widehat{a_i v_i a_{i+1}}$.
Then, the \emph{interval} of $F$ from $w_i$ to $w_j$ (or $v_i$ to $v_j$), denoted by $I=[w_i,w_j]$, is the (clockwise) subwalk of $F$ from $w_i$ to $w_j$.
Within our proofs, all intervals are considered as clockwise, or \say{left} to \say{right}, but the results hold for intervals ordered anti-clockwise by symmetry.

\paragraph{Completions between angles and their endpoints.}
In this section, a \emph{completion} $X$ of $D$ is specified as an $X \subseteq W(D)^2$ such that $D+X$ is plane and oriented, where $e=(w,w') \in X$ is embedded between the two \textsl{angles} $w$ and $w'$ of $D$.
Observe that considering arcs of $X$ as such uniquely determines their embedding in $D+X$.
Given an element $e=(w,w')\in X$, $w^e$ denotes the \emph{endpoint} of $e$ incident at angle $w$, and we let $\Xend$ be the set of endpoints of $X$.
For any set of endpoints $U$,  $W(U)$ is the set \emph{angles}, and $V(U)$ is the set of vertices, corresponding to $U$. For a completion $X$, we let $W(X) = W(\Xend)$ and $V(X) = V(\Xend)$.

Given an interval $I$ of $F$, $\Xend_I$ denotes the set $\Xend \cap I$ of endpoints of $X$ on $I$. Recall $X_I$ is the set of arcs of $X$ with \textsl{at least one} endpoint on $I$. 
In particular, when $I$ is the whole boundary of $F$, $X_F$ (resp. $\Xend_F$) consists of all arcs of $X$ (resp. all endpoints of $\Xend$) embedded in $F$.
When $I=[w]$ consists of a single vertex, we may simply denote this set by $X_w$, which we refer to as the endpoints of $X$ \emph{at} $w$.

\subsection{Structure of the instances}\label{ssec_structure_instance}

Throughout the section, we let $(D,k)$ be an instance to \spsca, where $D$ is a plane oriented graph, and ask for a solution $X$ of size at most $k$ such that $D+X$ is still plane and oriented.
In this subsection, we define the intervals in which our reconfiguration happens: local terminals and interval dipaths, then distinguish face types according to local terminals.

To deal with non-acyclic instances, we should capture the \say{equivalence} of vertices in the same strong component with respects to strong connectivity augmentations.
This is done through the following.
\begin{definition}[Strong intervals]
    A {\em strong interval} of a face $F$ is a maximal interval of its boundary such that all corresponding vertices belong to the same strong component of $D$.
\end{definition}

\subsubsection{Local terminals}

We now define local terminals, see~\Cref{fig:local-ter} for an illustration.
\begin{definition}[Local terminals]\label{def_local-terminals}
Given a plane digraph $D$ (allowing loops) and a face $F$ of $D$, a \emph{local terminal} of $F$ is a strong interval $I = [w_i,w_j]$ such that either:
\begin{itemize}
    \item  The arcs preceding $w_i$ and following $w_j$ along $F$ are both out-arcs, then $I$ is a \emph{local source}.
    \item  The arcs preceding $w_i$ and following $w_j$ along $F$ are both in-arcs, then $I$ is a \emph{local sink}.
\end{itemize}
\end{definition}
\noindent
Local terminals can also be seen as the strong intervals of $F$ forming dicuts in the digraph induced by the arcs of $F$.
The need for loops in~\Cref{def_local-terminals} is merely a technical artifact used in~\Cref{lem_bound-lt-alt-oriented}, but none of our instances contain loops.

\begin{figure}
    \centering
    \includegraphics[scale=1.3]{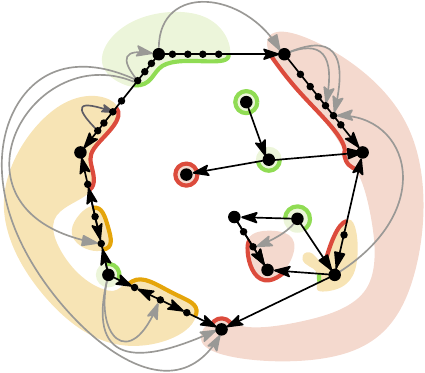}
    \caption{A face $F$ of $D$, with arcs of $D[V(F)]$ embedded outside $F$ shown in gray. The strong components of $D$ are shown as clouds in green (source), red (sink) and orange (non-trivial intermediate). Each consecutive intersection of these components with the boundary of $F$ forms a strong interval. Strong intervals are shown by thick lines, in green for local sources, red for local sinks, and orange otherwise.}
    \label{fig:local-ter}
\end{figure}

Observe that a (possibly trivial) terminal component of $D$ necessarily results in at least one local terminal in each face it intersects. Still, local terminals may also stem from non-terminal components, and a strong component can lead to several local terminals, even in the same face.
Now, any strong interval which is not a local terminal must be contiguous to exactly one in-arc and one out-arc on the boundary. This is in particular the case for strong intervals between two consecutive local terminals of $F$.
In turn, consecutive local terminals are of different types, meaning local sources and local sinks alternate following $F$. Therefore, they are equal in number, and $\lt(F)$ is always even.

\subsubsection{Interval dipaths between local terminals}

We now define interval dipaths, which are the intervals between local terminals, then show that these behave essentially like directed paths, connectivity-wise.
\begin{definition}
    Given a face $F$ of $D$, an {\em interval dipath} of $D$ is a maximal interval $P$ of the boundary that does not intersect a local terminal.
\end{definition}
\noindent
Observe that any interval dipath $P=[w_1,w_r]$ must be contiguous to a local source $S$ and a local sink $T$, and we say that $P$ is from $S$ to $T$.
Up to symmetry, we will always assume that, clockwise around $F$, $w_1$ is the angle following some $w_0 \in S$, and $w_r$ is an in-neighbour of a following $w_{r+1} \in T$. 
We may therefore use the terms {\em left} to refer to lower indices (towards $S$) and {\em right} referring to higher indices (towards $T$).

\begin{lemma}\label{lem_paths-between-localT}
Let $P=[w_1,w_r]$ be an interval dipath from a local source $S$ to local sink $T$.
\begin{enumerate}
    \item For any vertices $s\in S$, $t\in T$, and any vertex $w_i\in P$, there exists a directed path from $s$ to $w_i$ and one from $w_i$ to $t$.
    \item For any $i,j \in [1,r]$ with $i<j$, there exists a directed path from $w_i$ to $w_j$.
\end{enumerate}
\end{lemma}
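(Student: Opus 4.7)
The plan is to observe that $P$ decomposes into a sequence of strong intervals $C_1, \ldots, C_m$ of the boundary, each consecutive pair separated by a single boundary arc, with $S$ bordering $C_1$ on the left and $T$ bordering $C_m$ on the right. Since $P$ contains no local terminal by definition, each $C_i$ is non-terminal, meaning that among its two bordering boundary arcs exactly one is incoming and the other outgoing.

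The key structural step is to show by induction on $i$ that every boundary arc in $P$ (as well as the arc from $w_0 \in S$ to $w_1 \in C_1$ and the arc from the rightmost vertex of $C_m$ to $w_{r+1} \in T$) is directed from left to right. The base case holds because $S$ is a local source, so all its bordering arcs are outgoing, in particular the arc into $C_1$. For the inductive step, if the left-bordering arc of $C_i$ is incoming, then since $C_i$ is not a local sink, its right-bordering arc must be outgoing, hence directed into $C_{i+1}$. The analogous argument at $C_m$ yields that the arc from $C_m$ into $T$ also points left-to-right, which is consistent with $T$ being a local sink.

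For Part 1, given $s \in S$ and $w_i \in C_j$, one concatenates a directed path within the strong component of $S$ from $s$ to $w_0$, the boundary arc $(w_0, w_1)$, a directed path within $C_1$'s strong component from $w_1$ to its rightmost boundary vertex, the boundary arc out of $C_1$, and so on, until reaching $C_j$ and then $w_i$ via strong connectivity. The path from $w_i$ to $t$ is built symmetrically, walking rightward through $C_j, C_{j+1}, \ldots, C_m$ and into $T$. Part 2 follows by the same telescoping argument: if $w_i \in C_a$ and $w_j \in C_b$ with $a \leq b$ lie in the same strong component of $D$, strong connectivity provides a direct path; otherwise, we chain through $C_a, \ldots, C_b$ using the left-to-right boundary arcs.

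The main subtlety is keeping straight the distinction between a strong \emph{interval} (a maximal same-component subwalk of $F$'s boundary) and the strong \emph{component} of $D$ to which it belongs: the latter may contain vertices and arcs not on $F$'s boundary, and navigating inside each $C_i$ relies on strong connectivity of the full component, not merely of the interval.
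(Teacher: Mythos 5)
Your proof is correct and follows essentially the same route as the paper: decompose $P$ into its strong intervals, observe that each is non-terminal and hence has one incoming and one outgoing bordering arc, and chain dipaths within strong components together with the connecting boundary arcs. You are in fact slightly more careful than the paper, which asserts without explicit justification that the arc between consecutive strong intervals points left-to-right, whereas you establish this by the induction starting from the local source $S$.
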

\begin{proof}
    We show the first path, from a vertex $s \in S$, obtaining the path towards $t$ can be done symmetrically.
    Let $w_0$ be the angle outside $P$ preceding $w_1$ in $F.$
    As $s$ and $w_0$ belong to the same strong component of $D$, there is a dipath from $s$ to $w_0$, and the arc $(w_0,w_1)$ yields the dipath towards $w_1$. To show the paths towards all angles of $P$, it suffices to use the second property, which we prove in the following. 

    For the second property, consider two angles $w_i$ and $w_j$ with $i<j$. Let $I_0,\ldots,I_q$ be the strong intervals of $P$ ordered from left to right, such that $w_i\in I_0$ and $w_j\in I_q$ (possibly with $q=0$). As these are not local terminals, the last angle of $I_{\ell}$ has an out-arc to the first one of $I_{\ell+1}$, for $i \in [0,q-1]$.
    These arcs, along with dipaths in each strong interval between their endpoints, allow us to build the desired dipath from $w_i$ to $w_j$.
\end{proof}

\subsubsection{Simple and alternating faces}

We are now ready to define simple and alternating faces. 
Let $\lt(F)$ denote the (even) number of local terminals around a face $F$, recall that $\lt(F)$ is necessarily even.
As any face $F$ intersects at least one strong component of $D$, $F$ contains at least one (possibly trivial) strong interval $I=[a,b]$.
If $I$ consists in the whole boundary of $F$, $V(F)$ belongs to the same strong component, meaning $I$ cannot be a local terminal, and $\lt(F)=0$, and we say that $F$ is \emph{strong}. Observe already that it is unnecessary to complete a strong face with arcs in a minimal solution, which is why these can mostly be ignored in the following.
A face $F$ is \emph{simple} if it contains two local terminals, and $\SF(D)$ denotes the set of simple faces of $D$.
A face $F$ is \emph{alternating} if it contains at least four local terminals, and $\AF(D)$ denotes the set of alternating faces of $D$.

\subsubsection{Bounding local terminals in alternating faces}
\label{ssec_nb-local-ter-oriented}

The starting observation here is that, when the budget $k$ is bounded, the number of terminal components \textsl{of $D$} is bounded as well. Indeed, for any $Y$ belonging to the set of terminal components $\cT(D)$, a solution $X$ must contain an arc with an endpoint in $Y$, yielding the following.
\begin{observation}\label{obs_bound_terminals_k}
    For any positive instance $(D,k)$, we have $|\cT(D)| \leq 2k$
\end{observation}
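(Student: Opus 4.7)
The plan is to give a simple counting argument, exploiting the fact that a solution $X$ must supply at least one \say{incoming} arc to every source component and at least one \say{outgoing} arc from every sink component of $D$. Let $(D,k)$ be a positive instance and fix a solution $X$ of size at most $k$, so that $D+X$ is strongly connected.

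First, I would show separately that $|\{S \in \cT(D) : S \text{ is a source}\}| \leq k$. For any source component $S$ of $D$, the pair $(S, V(D) \setminus S)$ is a dicut of $D$. Since $D+X$ is strong, this cut must be broken in $D+X$, meaning $X$ contains at least one arc with head in $S$. Assign to $S$ any such arc $e_S \in X$. The map $S \mapsto e_S$ is injective, because each arc has a single head and the source components are pairwise disjoint. Hence the number of source components is at most $|X| \leq k$. By the symmetric argument on tails of arcs in $X$ versus sink components, the number of sink components is also at most $k$.

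Summing the two bounds yields $|\cT(D)| \leq 2k$, which is the desired conclusion. The only subtlety worth noting is that an arc of $X$ whose tail lies in a sink component and whose head lies in a source component is counted once in each injection, but this is harmless since the two injections are built independently on disjoint \say{slots} (heads versus tails) of the arcs of $X$. No serious obstacle is expected; the entire argument should fit in a couple of lines.
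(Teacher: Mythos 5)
Your argument is correct and is essentially the paper's: the paper observes that every terminal component must contain an endpoint of the solution $X$, and since $|X|\leq k$ there are at most $2k$ endpoints, giving $|\cT(D)|\leq 2k$. Your version merely refines this by matching heads of arcs of $X$ injectively to source components and tails to sink components (bounding each by $k$ separately), which is the same counting in slightly different packaging.
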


The next lemma bounds the number of local terminals across \textsl{all} alternating faces in terms of $|\cT(D)|$. With the observation above, this yields in particular a bound in terms of $k$. For the sake of the induction, we show this more generally for plane digraphs allowing loops.
\begin{lemma}\label{lem_bound-lt-alt-oriented}
For every plane digraph $D$ allowing loops, we have $$\sum_{F\in \AF(D)} \lt(F) \leq 4 |\cT(D)| - 8$$
\end{lemma}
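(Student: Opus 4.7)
The plan is to reduce to a plane DAG by condensing strong components, and then apply a planar bipartite counting argument in the spirit of Guattery and Miller~\cite{guatteryContractionProcedurePlanar1992}.

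First, I would contract each strong component of $D$ into a single vertex, obtaining a plane DAG $\tilde D$. The key observation is that this contraction preserves both sides of the target inequality: the terminal components of $D$ are exactly the terminal vertices of $\tilde D$, so $|\cT(\tilde D)| = |\cT(D)|$, and each strong interval on a face $F$ of $D$ (which by definition lies in a single strong component) collapses to a single appearance of the contracted vertex on the corresponding face of $\tilde D$ with the same incident-arc directions (arcs between distinct strong components are retained, and their orientations determine the local terminal type). Consequently, local terminals of $F$ are in bijection with those of the corresponding face of $\tilde D$, preserving their source/sink type, and $\sum_{F \in \AF(D)} \lt(F) = \sum_{\tilde F \in \AF(\tilde D)} \lt(\tilde F)$. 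Loops of $D$ are absorbed into the strong component containing them and become loops at the contracted vertex without touching any face boundary, which is why the statement allows loops. Hence it suffices to establish the bound for the plane DAG $\tilde D$.

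For $\tilde D$, I would then construct an auxiliary plane bipartite multigraph $H$ on vertex set $\cT(\tilde D)$, with the global sources on one side and the global sinks on the other. Inside each alternating face $F$ of $\tilde D$, the local terminals alternate between local sources and local sinks around the boundary; for each pair of neighboring local terminals, I would add an edge to $H$ drawn inside $F$, routed by following a directed path of $\tilde D$ from a chosen global source to the local source and another directed path from the local sink to a chosen global sink. The planarity of $\tilde D$ and the DAG structure allow these routings to be carried out without crossings, so that $H$ is planar and bipartite with $|E(H)| = \tfrac{1}{2}\sum_{\tilde F \in \AF(\tilde D)} \lt(\tilde F)$. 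The bipartite planar bound $|E(H)| \leq 2|V(H)| - 4$, valid whenever $|V(H)| \geq 3$ (the cases $|\cT(\tilde D)| \leq 2$ are vacuous since every face then has at most two local terminals and cannot be alternating), yields $\sum_{\tilde F \in \AF(\tilde D)} \lt(\tilde F) \leq 4|\cT(\tilde D)| - 8 = 4|\cT(D)| - 8$.

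The main obstacle lies in the second step, namely in producing the auxiliary graph $H$ as a genuinely plane bipartite graph: each local terminal may be an internal vertex of $\tilde D$ rather than a global source or sink, and one must consistently associate it with a global terminal via a directed path so that the resulting edges inside each alternating face respect the plane embedding with no crossings. This is exactly the content of the Guattery--Miller contraction procedure, which I would invoke rather than rederive in detail. A minor subtlety is the handling of multi-edges in $H$, which may arise when several alternating faces produce parallel source--sink pairs; these do not affect the bound since the planar bipartite inequality can be applied either after a simplification step or directly by a face-length argument on $H$.
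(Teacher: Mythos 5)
Your first step (condense strong components and transport terminals and local terminals bijectively) matches the paper's, but your treatment of loops is not correct: a loop is an arc of the plane multidigraph, it does lie on face boundaries (it bounds a face by itself, and may separate the plane into two regions each containing part of the graph), so it cannot be ``absorbed without touching any face boundary''. The paper runs a separate induction on the number of loops precisely here; the delicate point is that cutting the graph along a separating loop at $v$ can turn the non-terminal component of $v$ into a terminal of one of the two pieces, so one only gets $|\cT(D_1)|+|\cT(D_2)|\le|\cT(D)|+1$, and this slack must be absorbed by the additive constant $-8$. That accounting is absent from your argument.

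The second step also has gaps. As written, ``an edge for each pair of neighboring local terminals'' produces $\lt(F)$ edges per alternating face (each local terminal has two neighbours), which is inconsistent with your formula $|E(H)|=\tfrac12\sum\lt(\tilde F)$ and would ``prove'' $\sum_{F\in\AF}\lt(F)\le 2|\cT|-4$ --- false already for the alternating $4$-cycle $s_1\to t_1\leftarrow s_2\to t_2\leftarrow s_1$, where both faces are alternating, the sum is $8$, and $2|\cT|-4=4$ (note $4|\cT|-8=8$ is tight there). If instead you pair each local source with one chosen neighbouring sink, you still must: (i) actually construct the plane embedding of $H$, i.e.\ route the global-source-to-local-source and local-sink-to-global-sink dipaths without crossings --- this is the entire difficulty, and you only defer it, whereas the paper proves its Theorem~2.1 self-containedly by a purely local angle count ($\sum_{v}\overline{\lt}(v)+\sum_F\lt(F)=2|A(D)|$, with $\overline{\lt}(v)=0$ exactly at terminals) plus Euler's formula, with no routing at all; (ii) justify $|E(H)|\le 2|V(H)|-4$ for a multigraph, which requires excluding faces of length $2$ in $H$ --- ``simplifying'' discards edges and hence the count; and (iii) not dismiss $|\cT|\le 2$ as vacuous by asserting that no face is then alternating, since that assertion is exactly the lemma in that case. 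The paper's Euler-formula counting for the loopless DAG base case sidesteps all three issues and is the argument you should substitute for your auxiliary-graph construction.
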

\begin{proof}
    Successively contracting any (non-loop) arc lying within a strong component of $D,$ yields a plane digraph $D'$ that is a DAG with loops. 
    More precisely, deleting the loops of this digraph yields an oriented DAG. Recall that the definitions of local terminals and alternating faces also hold for DAGs with loops. 
    
    \begin{claim}
    Given a digraph $D_1$, and a digraph $D_2$ obtained from $D_1$ through the contraction of a single non-loop arc $uv$ lying inside a strong component of $D_1,$ we have 
    $$|\cT(D_1)| = |\cT(D_2)| \text{ and } \sum_{F\in \AF(D_1)} \lt(F) = \sum_{F\in \AF(D_2)} \lt(F).$$
    \end{claim}
    \begin{proofof}{Claim}
        Note that the strong components of $D_1$ are bijectively mapped to the strong components of $D_2,$ in such a way that the terminal components of $D_1$ are mapped to the terminal components of $D_2.$
        Also, the faces of $D_1$ are bijectively mapped to the faces of $D_2.$ These bijections are such that if a face $F_1$ of $D_1$ is mapped to a face $F_2$ of $D_2,$ we have that $\lt(F_1)=\lt(F_2).$ This implies that the faces of $\AF(D_1)$ are mapped to those of $\AF(D_2).$ Hence, the claim holds.
    \end{proofof}

    It is thus sufficient to prove the lemma for the plane oriented DAGs with loops. Let us proceed by induction on the number of these loops. The initial case, with a loopless $D,$ follows by \Cref{lem_bound-lt-alt} and  we distinguish two cases for the induction.
    First, consider the case where there exists a face $F$ with only one arc on its boundary, that is, a loop $e = (u,u)$. Delete $e$, and observe that for the other face incident to $e,$ the number of local terminals is unchanged, such as for all the other faces. Hence, by induction the inequality of the lemma holds.

    Otherwise, we may consider the case where a loop $e=(u,u)$ separates two non-empty regions. Let $D_1$ and $D_2,$ be the subdigraphs of $D$ induced by the arcs inside or outside $\ell,$ respectively. One can observe that by definition, $\ell$ is contained in a strong interval for any of its incident faces. Hence, for any face $F$ of $D,$ the corresponding face of $D_1$ or $D_2,$ according to the position of $F$ with respect to $\ell,$ keeps the same number of local terminals. Hence, 
    $$ \sum_{F\in \AF(D)} \lt(F) = \sum_{F\in \AF(D_1)} \lt(F) + \sum_{F\in \AF(D_2)} \lt(F).$$
    
    Furthermore, if the strong component $C$ of $v$ in $D$ (which has only $v$ as vertex) is terminal in $D,$ then in each of $D_1$ and $D_2$ the strong components of $v,$ $C_1$ and $C_2$ respectively, are also terminal in their respective digraphs, while if $C$ is not terminal at least one of $C_1$ or $C_2$ is not terminal. Hence,  $ |\cT(D_1)| + |\cT(D_2)| \le |\cT(D)| +1,$ which implies that: 
    $$ \sum_{F\in \AF(D)} \lt(F) \le 4|\cT(D_1)| -8 + 4|\cT(D_2)| - 8 \le 4|\cT(D)| -8.$$
\end{proof}

\subsection{Multicompletions}\label{ssec_multicompletions}

For the sake of reconfiguring a completion $X$, we relax the constraint of $X$ having to induce an oriented graph, as long as it does not create a digon with $D$.
\begin{definition}[Multicompletion]\label{def_multicompletion}
    Given a plane oriented graph $D$, a \emph{multicompletion} of $D$ is a multiset $X$ on $W(D)^2$ such that: $D+X$ is a plane multidigraph; and any digons, parallel arcs, or loops of $D+X$ belong entirely to $X$.
\end{definition}
\noindent
As is the case for completions, the embedding of an arc in a multicompletion is fully determined by its angles (up to permuting parallel arcs and loops around a fixed angle).

\subsubsection{Minimally strong multicompletions are solutions}

Because we consider multicompletions, the reconfiguration of solution $X$ may introduce digons, parallel arcs, or loops into $X$.
The following lemma allows us to ignore this issue for most applications: when shifts preserve strong connectivity, they result in a (proper) solution.   
\begin{lemma}\label{lem_no_digon_multicompletion}
    Given an oriented graph $D$ whose underlying graph is connected, any minimal multicompletion $X$ such that $D+X$ is strongly connected satisfies:
    \begin{itemize}
        \item No arc of $X$ lies within a single strong component of $D$,
        \item At most one arc of $X$ lies between any pair of strong components.
    \end{itemize}
    In particular, $X$ is a solution.
\end{lemma}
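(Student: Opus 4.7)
The plan is to establish both bullets via minimality arguments on $X$, then conclude by observing that these restrictions already force $X$ to be a proper solution. For the first bullet, I would argue directly: if some $e = (u,v) \in X$ has both endpoints in the same strong component $C$ of $D$, then $D$ already contains dipaths $u \leadsto v$ and $v \leadsto u$ inside $C$. Hence any dipath in $D+X$ using $e$ can be rerouted through $D$ alone, showing $D+X-e$ is still strong, contradicting the minimality of $X$.

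For the second bullet, I plan to work in the plane quotient DAG $H$ obtained by contracting each strong component of $D$, together with the induced projection $X'$ of $X$ (noting that all arcs of $X$ do project to arcs of $H$, thanks to the first bullet). Since the underlying graph of $D$ is connected, so is that of $H$. Suppose for contradiction that two arcs $e_1, e_2 \in X$ both lie between components $C_1, C_2$. If they point in the same direction, say $C_1 \to C_2$ with $e_i = (u_i,v_i)$, then any dipath of $D+X$ using $e_1$ can detour inside $C_1$ from $u_1$ to $u_2$, cross via $e_2$, and continue inside $C_2$ from $v_2$ to $v_1$; hence $D+X-e_1$ remains strong, contradicting minimality. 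For the opposite-direction case $e_1 : C_1 \to C_2$ and $e_2 : C_2 \to C_1$, I split on the partial order of $H$: if $H$ already has a dipath from $C_1$ to $C_2$, then $D$ itself contains $u_1 \leadsto v_1$, making $e_1$ removable; symmetrically for a dipath $C_2 \leadsto C_1$ in $H$.

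The hard part will be the subcase where $C_1$ and $C_2$ are incomparable in $H$. Assuming both $e_1$ and $e_2$ are essential, I would let $R$ be the forward-reach of $C_1$ in $H + X' - e_1$, and $R'$ the forward-reach of $C_2$ in $H + X' - e_2$. By construction $R$ and $R'$ are forward-closed in these subgraphs, so in $H + X'$ the unique arc leaving $R$ is $e_1$, and the unique arc leaving $R'$ is $e_2$. A short check shows that no arc of $H + X'$ can leave $R \cap R'$, since any such arc would have to coincide with $e_1$ or $e_2$, whose tails lie outside $R \cap R'$ ($C_1 \notin R'$ and $C_2 \notin R$); strong connectivity of $H + X'$ then forces $R \cap R' = \emptyset$. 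A shortest-path argument rules out any $C_3 \in V(H) \setminus (R \cup R')$, giving $V(H) = R \sqcup R'$. But then the only arcs of $H + X'$ crossing this partition are $e_1, e_2 \in X'$, meaning $H$ has no arc between $R$ and $R'$, contradicting the connectedness of its underlying graph.

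Finally, the two bullets together imply that $X$ has no loops, no parallel arcs, and no digons, neither among its own arcs nor jointly with $D$ (the latter being excluded by the multicompletion convention, under which any such would have to lie entirely in $X$). Therefore $D+X$ is plane and oriented, i.e.\ $X$ is a completion; combined with strong connectivity of $D+X$, this means $X$ is a solution.
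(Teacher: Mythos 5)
Your proposal is correct, and its overall skeleton matches the paper's: both bullets follow from minimality via rerouting arguments, with the only delicate sub-case being two antiparallel arcs $e_1,e_2$ of $X$ between components $C_1,C_2$. For that sub-case the two arguments diverge in execution while hinging on the same key fact (connectedness of the underlying graph of $D$). The paper deletes \emph{both} arcs at once, observes that every terminal component of $D+X-\{e_1,e_2\}$ must contain $C_1$ or $C_2$, uses connectedness to conclude that one of these is the unique source and the other the unique sink of the condensation, and hence recovers a dipath replacing $e_1$. You instead keep one arc at a time and run a cut argument in the condensation $H+X'$: the forward-reach sets $R$ of $C_1$ (without $e_1$) and $R'$ of $C_2$ (without $e_2$) are shown to partition $V(H)$ with $e_1,e_2$ as the only crossing arcs, contradicting connectedness of $H$. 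Your route is somewhat longer (and your preliminary split on comparability of $C_1,C_2$ in $H$ is not strictly needed, since $C_2\in R$ already contradicts essentiality of $e_1$), but it is fully rigorous and arguably makes the role of the connectedness hypothesis more transparent; the paper's version is shorter but leans on a terse claim about terminal components. Your concluding paragraph deriving \say{$X$ is a solution} from the two bullets plus the multicompletion convention matches the paper exactly.
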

\begin{proof}
    It is clear that any arc of $X$ between two vertices of the same strong component of $D$ may be removed from $D+X$.
    When there are two arcs $e,e' \in X$ from a component $U$ to a component $U'$, either of them can be removed.
    In both cases, the vertices connected using the removed arc can always be replaced by a directed path in the resulting digraph.

    Now, assume that for strong components $U,U'$ of $D$, there exist arcs $e,e' \in X$ with $e$ directed from $U$ to $U'$, and $e'$ from $U'$ to $U$. 
    We argue that either $e$ or $e'$ can be removed from $X$ while maintaining a solution.
    Let $D' = D + X - \{e,e'\}$, and note that all terminal components of $D'$ must contain either $U$ or $U'$ (since they are not terminal in $D$).
    Let us call $Y,Y'$ the components containing $U,U'$ respectively.
    Because the underlying graph of $D$ is connected, $Y$ and $Y'$ cannot be isolated (that is, both source and sink), meaning w.l.o.g. we can assume $Y$ is the unique source and $Y'$ is the unique sink.
    In this setting, there exists a dipath from (any vertex of) $Y$ to (any vertex of) $Y'$.
    Then, in any dipath of $D+X$ using $e$, $e$ may be replaced by a disjoint dipath, meaning $D'+e'$ is strongly connected. That is, $X-e$ is a multicompletion achieving strong connectivity, which contradicts minimality.

    In particular, all the above forbids any loops, digons or parallel arcs fully in $X$, meaning none exist in $D+X$ by \Cref{def_multicompletion}, so $X$ is also a solution.
\end{proof}

\subsubsection{Backward arcs on interval dipaths}

For an interval $I$, the arcs of $X_I$ having both endpoints on $I$ are called \emph{backward arcs of $I$}.
By \Cref{lem_no_digon_multicompletion}, a minimum solution cannot contain a backward arc within a local terminal, so we will only be concerned with backward arcs within interval dipaths.
The following observations, derived from planarity and from~\Cref{lem_paths-between-localT}, allow us to further constrain occurences of backward arcs on interval dipaths.
\begin{observation}\label{obs_backward}
If $X$ is a minimal solution, for any backward arc $e=(w_j,w_i)$ of an interval dipath $P$:
\begin{enumerate}[label=(\roman*)]
    \item no arcs of $X$ have both endpoints in $[w_{i},w_{j}]$, that is, backward arcs are not nested;
    \item $i<j$, that is, $e$ is directed from right to left.
\end{enumerate}
\end{observation}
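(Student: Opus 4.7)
The plan is to prove both parts via the minimality of $X$, using the dipaths on $P$ guaranteed by \Cref{lem_paths-between-localT}(2). In both cases, the contradiction argument is the same: if some arc of $X$ has its endpoints already connected in the forward direction by a dipath in $D$ (or in $D$ together with a subset of $X$ not including that arc), then it can be removed from $X$ while preserving strong connectivity, contradicting minimality.

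First I would prove (ii). Suppose, toward a contradiction, that $e=(w_j,w_i)$ is a backward arc with $j<i$, i.e.\ $e$ is oriented left to right along $P$. By \Cref{lem_paths-between-localT}(2), $D$ already contains a dipath $Q$ from $w_j$ to $w_i$, which in particular lies in $D+X-\{e\}$. Any dipath of $D+X$ that traverses $e$ may be rerouted through $Q$, so $D+X-\{e\}$ is still strongly connected, contradicting the minimality of $X$. Hence every backward arc satisfies $i<j$.

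Next I would prove (i), so assume $e=(w_j,w_i)$ with $i<j$. For every angle $w_a$ with $i\le a\le j$, \Cref{lem_paths-between-localT}(2) yields a dipath from $w_i$ to $w_a$ and one from $w_a$ to $w_j$ in $D$; combined with $e=(w_j,w_i)$, these place $w_a$ on a dicycle with $w_i$ in $D+\{e\}$. Consequently every vertex of $V([w_i,w_j])$ lies in a single strong component of $D+\{e\}$. Now suppose some $e'\in X$ with $e'\neq e$ also had both endpoints in $[w_i,w_j]$. Then both endpoints of $e'$ belong to a common strong component of $D+\{e\}\subseteq D+X-\{e'\}$, so any dipath of $D+X$ using $e'$ can be rerouted inside this component. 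It follows that $D+X-\{e'\}$ remains strongly connected, contradicting the minimality of $X$.

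The only point worth checking carefully is that rerouting at a single removed arc preserves global strong connectivity; this is immediate, since every dipath of $D+X$ either already avoided the arc or can be patched using the dipath identified above, and no other dipaths are affected. The arguments are symmetric enough that both parts essentially reduce to the same redundancy principle, so I do not anticipate any significant obstacle.
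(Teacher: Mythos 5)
Your proof is correct and follows exactly the derivation the paper intends: the observation is stated without an explicit proof, with the remark that it follows from \Cref{lem_paths-between-localT}, and your redundancy-by-minimality argument (a left-to-right arc is already realized by a dipath on $P$; the arc $e$ together with the dipaths of \Cref{lem_paths-between-localT}(2) puts all of $[w_i,w_j]$ in one strong component of $D+\{e\}$, making any nested arc removable) is precisely that derivation. No gaps.
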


\subsection{Shifting endpoints of a completion}\label{ssec_shifts}

In this subsection, we define shifts, the operation we use throughout our reconfiguration, then show how to use shifts to gather endpoints onto a common angle.
\begin{definition}[Shifts]\label{def_shift}
    Let $I = [w_1,w_r]$ be an interval of some face $F$, $X$ be a multicompletion, and $w_i^e \in \Xend_I$.
    Let $w_m^a \neq w_i^e$ be the rightmost endpoint of $\Xend_I$ with $m \leq i$, letting $w_m=w_1$ if undefined.
    Let $w_M^b \neq w_i^e$ be the leftmost endpoint of $\Xend_I$ with $M \geq i$, letting $w_M=w_r$ if undefined.

    We define the following shifts, substituting $w_i^e$ for $w_j^e$ in $e$, when $w_j$ is defined: 
    \begin{itemize}
        \item When $m < i$, the \emph{left shift} of $w_i^e$ is the substitution of $w_i^e$ for $w_j^e$, minimizing $j \in [m,i[$ and maintaining a multicompletion.
        \item When $M > i$, the \emph{right shift} of $w_i^e$ is the substitution of $w_i^e$ for $w_j^e$ maximizing $j \in ]i,M]$ and maintaining a multicompletion.
    \end{itemize}
\end{definition}

\begin{definition}[Shifts]\label{def_shift}
    Let $I = [w_1,w_r]$ be an interval of some face $F$, $X$ be a multicompletion, and let $e\in X_F$ be an arc with an endpoint $w_i^e$ in $I$.
    Let $w_m^a$ and $w_M^b$ be the endpoints of $\Xend_I$ preceding and following $w_i^e$, with $w_m=w_1$ and $w_M=w_r$ if undefined.
    \begin{itemize}
        \item The \emph{left shift} of $w_i^e$ is the replacement of this endpoint with the leftmost angle $w_j$ with $j \in [m,i[$ maintaining a multicompletion, that is, maintaining that $D+e$ is plane and oriented.
        \item The \emph{right shift} of $w_i^e$ is the replacement of this endpoint with the rightmost possible angle $w_j$ with $j \in ]i,M]$ maintaining a multicompletion.
    \end{itemize}
\end{definition}
Observe that, for $e=(u,w_i)$, the left shift of $w_i^e$ may indeed create a parallel arc or a digon in $X$, if $X$ already contains $(u,w_j)$ or $(w_j,u)$, or even a loop when $u = w_m$.

\subsubsection{Gathering endpoints}

The following operation is based on shifts, and will allow us to gather endpoints of a solution that are spread out into a single angle, while maintaining a multicompletion.
\begin{lemma}\label{lem_gather}
    Consider any interval $I = [w_1,w_r]$ of a face $F$, and any multicompletion $X$ incident to two angles $w_i,w_j$ ($i <j$), such that $X$ is not incident to any $w_{\ell}$ with $\ell \in ]i,j[$.
    Then, either all the endpoints at $w_i$ can be successively shifted right onto $w_j$, or all the endpoints at $w_j$ can be successively shifted left onto $w_i$.
\end{lemma}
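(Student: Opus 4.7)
The plan is to prove the contrapositive: if neither set of successive shifts succeeds, then planarity of $D$ is violated.

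I first isolate the possible obstructions to an individual shift. Because $X$ has no endpoint at any $w_\ell$ with $\ell \in ]i,j[$, the region of $F$ swept by an arc of $X_F$ when one of its endpoints is moved from $w_i$ to $w_j$ contains no chord of $X_F$. Hence any such shift is topologically feasible inside $F$, and the only obstruction is the creation of a digon or of a parallel arc with $D$. Precisely, for $e\in X_F$ with one endpoint at $w_i$ and the other at an angle $\alpha$ of $F$ on a vertex $u$, the endpoint at $w_i$ can be moved to $w_j$ if and only if neither $(u,w_j)$ nor $(w_j,u)$ is an arc of $D$; symmetrically for left-shifts of endpoints at $w_j$ onto $w_i$.

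I then observe that the successive shifts of the lemma reduce to individual ones. Let $e_1,\ldots,e_p$ be the endpoints at $w_i$ in cyclic order, with $e_p$ rightmost, and suppose each of them individually can be placed at $w_j$. The right-shift of $e_p$ lands at $w_j$ by the previous paragraph; afterwards $e_p$ sits at $w_j$, so the endpoint of $\Xend_I$ following $e_{p-1}$ becomes $e_p$ at $w_j$, and the right-shift of $e_{p-1}$ may therefore still target $w_j$, landing there by the same criterion. Iterating yields the first alternative, and a symmetric argument handles the second. Consequently, the first alternative holds if and only if for every arc $e\in X_F$ incident to $w_i$ and to a vertex $u$, $D$ contains no arc between $u$ and $w_j$; dually for the second alternative with respect to $w_j$ and $w_i$.

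Finally, I argue that the two alternatives cannot both fail. Assume otherwise, with witnesses $e\in X_F$ at $w_i$ whose second endpoint lies at an angle $\alpha$ on a vertex $u$, such that $D$ contains an arc $a$ between $u$ and $w_j$, and $f\in X_F$ at $w_j$ whose second endpoint lies at $\beta$ on a vertex $v$, such that $D$ contains an arc $b$ between $v$ and $w_i$. Both $\alpha$ and $\beta$ lie on the clockwise arc of $\partial F$ from $w_j$ back to $w_i$, as neither belongs to $]w_i,w_j[$. The chord $e$ partitions the disk bounded by $F$ into two regions, exactly one of which contains $w_j$ on its boundary; since $f$ is incident to $w_j$ and cannot cross $e$, $f$ lies in that region, placing $\beta$ on the clockwise boundary arc from $w_j$ to $\alpha$. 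The main obstacle is then to conclude that $a$ and $b$ must cross in $D$'s embedding: traversing $\partial F$ clockwise, the four angles appear in the order $w_i, w_j, \beta, \alpha$, so the endpoints $\{w_j, \alpha\}$ of $a$ interleave with the endpoints $\{w_i, \beta\}$ of $b$ along the Jordan curve $\partial F$; as both $a$ and $b$ are embedded in the exterior of the open disk $F$, any two arcs with interleaving endpoints on $\partial F$ on the same side of it must cross, contradicting the planarity of $D$. Degenerate subcases in which $u$, $v$ coincide with each other or with one of $w_i, w_j$ either reduce to the generic case or create a loop or digon in $D$, both forbidden since $D$ is oriented.
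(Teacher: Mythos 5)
Your proof is correct and takes essentially the same route as the paper's: both identify the only obstruction to a shift as a pre-existing arc of $D$ between the moving endpoint's partner and the target angle, and both conclude by observing that two such obstructing arcs of $D$ would have interleaving endpoints on $\partial F$ while lying outside the open face, forcing a crossing. Your handling of the order of the successive shifts (rightmost endpoint first) is slightly more explicit than the paper's, but the argument is the same.
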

\begin{proof}
    The following considers all arcs irrespective of their orientation, and we use $uv$ to mean both $(u,v)$ and $(v,u)$.
    Recall that by \Cref{def_shift}, any right (resp. left) shift of an endpoint in $\Xend_{w_i}$ (resp. $\Xend_{w_j}$) results in an endpoint on $[w_i,w_j]$. 
    Consider the successive right shifts of endpoints $w_i^e \in \Xend_{w_i}$. 
    If at any point, the right shift of some $w_i^e$ would not result in $w_j$, we stop the process. Otherwise we obtain the desired result. 

    In the case where we stopped the process, say at some $e = u w_i$, there must be an arc not internal to $F$ between (vertices corresponding to) $u$ and $w_j$ in $D$.
    Therefore, at this point, any $e' = u' w_j \in X_{w_j}$ is such that $u' \in ]w_j,u[$, else $u' w_j$ would cross $u w_i$.
    Therefore, no such $u'$ can be a neighbour of $w_i$, for otherwise $u'w_i$ would cross $u w_j$ in $D$.
    Now, we consider symmetrically the successive left shifts of $\Xend_{w_j}$.
    Then, since $w_i$ is not a neighbour of any non-$w_j$ endpoint, all such shifts result in $w_i$, concluding the proof.
\end{proof}

\subsection{Stacks of a solution and their supports}\label{ssec_stacks}

We now define the left and right stacks of a multicompletion on an interval, which are \say{maximal} subsets of endpoints that cannot be shifted. Then, we will show that these stacks can only correspond to a bounded number of subsets of angles called supports.
\begin{definition}[Stacks]\label{def_stack}
    Let $I$ be an interval of some face $F$, $X$ be any multicompletion, and endpoints $\Xend_I$ ordered from left to right according to the incidence of their corresponding arcs on $I$.
    \begin{itemize}
        \item The left stack of $X$ on $I$, denoted $\Xend_I^L$, is the maximal prefix of $\Xend_I$ containing no endpoint that can be shifted left. 
        \item The right stack of $X$ on $I$, denoted $\Xend_I^R$, is the maximal suffix of $\Xend_I$ containing no endpoint that can be shifted right.
    \end{itemize}
\end{definition}
\noindent
Note that these sets may overlap, which will not be an issue as our goal is only to reconfigure endpoints to be in their union. Observe also that $W(\Xend_I^L)$ must be disjoint from $W(\Xend_I \setminus \Xend_I^L)$, by the definition of shifts.

In order to constrain the incidence of stacks on intervals, we will need the following observation, stemming from the fact that each face induces an outerplanar digraph.
\begin{observation}\label{obs_unique-common-neighbour}
    For any face $F = (w_1,...,w_r)$, any two consecutive $w_{i},w_{i+1}$ on the boundary have at most one common neighbour in $V(F)$.
\end{observation}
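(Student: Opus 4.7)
I plan to prove this by contradiction. Suppose $w_i$ and $w_{i+1}$ admit two distinct common neighbours $u$ and $v$ in $V(F)$, so that the four underlying edges $w_i u$, $w_i v$, $w_{i+1} u$, $w_{i+1} v$ all exist in $D$. The key structural observation is that each such edge, connecting two vertices of $V(F)$, must either lie on the boundary walk of $F$ or be embedded on the side of that walk opposite to $F$: any edge drawn through the interior of $F$ would split $F$ into two faces, contradicting the hypothesis that $F$ is a single face of $D$.

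I will then apply a standard chord-interleaving argument. Parameterise the boundary walk of $F$ as a circle $C$ on which the angles $w_1,\ldots,w_r$ sit in their cyclic order; vertices may repeat along $F$ when $D$ is not $3$-connected, but the argument operates purely at the level of angles, so this plays no role. Since $w_i$ and $w_{i+1}$ are consecutive on $C$, both $u$ and $v$ lie on the long arc from $w_{i+1}$ back to $w_i$. Without loss of generality $u$ precedes $v$ along this arc, so the cyclic order on $C$ reads $w_i, w_{i+1}, \ldots, u, \ldots, v$. Viewing the edges that are not on $F$'s boundary as chords drawn on the side of $C$ opposite to $F$, the endpoint pairs $\{w_i, u\}$ and $\{w_{i+1}, v\}$ interleave around $C$, so the chords $w_i u$ and $w_{i+1} v$ are forced to cross, contradicting the planarity of $D$.

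The main obstacle I anticipate is the handling of degenerate subcases in which one (or more) of the four edges actually lies on the boundary walk of $F$ rather than on the opposite side; this happens precisely when $u$ or $v$ coincides with $w_{i-1}$ or $w_{i+2}$. In such subcases I would switch to the complementary pair of chords, namely $w_{i+1} u$ and $w_i v$, and check that they still interleave around $C$ (for instance, when $u = w_{i-1}$ the cyclic order $w_{i-1}, w_i, w_{i+1}, \ldots, v$ gives the same alternation between the two pairs) and that both genuinely lie on the side opposite $F$. A short case analysis over the at most four degenerate configurations always produces such a pair, and the same crossing argument yields the contradiction, completing the proof.
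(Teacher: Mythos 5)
The paper states this as an unproved observation, justified only by the remark that each face induces an outerplanar digraph, so your chord-interleaving argument is exactly the intended justification, and your list of degenerate cases (where some of the four edges lie on the boundary walk itself) is the right case analysis. The one place where your write-up is too quick is the claim that repeated occurrences of a vertex along the boundary \say{play no role}: the four edges are attached to \emph{vertices}, not angles, so when $u$ or $v$ occurs at several angles of $F$ the choice of occurrence on your circle $C$ is not canonical, the boundary walk is not a simple closed curve, and the fact that \say{interleaving chords on one side of a Jordan curve must cross} no longer applies verbatim. A clean, occurrence-independent way to close this gap: the five edges $w_iw_{i+1}$, $w_iu$, $w_iv$, $w_{i+1}u$, $w_{i+1}v$ form a $2$-connected plane subgraph $H$ isomorphic to $K_4$ minus the edge $uv$; in every plane embedding of $H$ the only faces incident to the edge $w_iw_{i+1}$ are the two triangles $w_iuw_{i+1}$ and $w_ivw_{i+1}$, and since $F$ is a face of $D\supseteq H$ incident to $w_iw_{i+1}$, it is contained in the closure of one of these triangles, whose boundary misses one of $u,v$ --- contradicting $u,v\in V(F)$.
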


We now define the set of left supports $Z^L_I(q)$, and the set of right supports $Z^R_I(q)$. 
\begin{definition}[Supports]\label{def_supports}
    Consider any face $F$ of $D$, and any interval $I = [w_1,w_r]$ of $F$, we define a set $Z^L_I(q)$, consisting of subsets of $I$ of size $q$ called \emph{left $q$-supports} of $I$.

    We first let $Z^L_I(1)$ contain $\{w_1\}$, $\{w_2\}$.
    Then, if $w_1,w_2$ have a common neighbour $u$ on $F$, add $\{w_{h}\}$ to $Z^L_I(1)$, where $w_h$ is the leftmost non-neighbour of $u$, if it exists.

    For $q>1$, we define $Z^L_I(q)$ from $Z^L_I(q-1)$ as follows.
    Iterating over all $B \in  Z^L_I(q-1)$, let $w_i$ be the element with the highest index in $B$, and add the following left $q$-supports to $Z^L_I(q)$:
    \begin{itemize}
        \item If $i<r$, add $B \cup w_{i+1}$.
        \item If there is a common neighbour $u$ to $w_i,w_{i+1}$, add $B \cup w_h$, where $w_h$ is the leftmost non-neighbour of $u$, if it exists.
    \end{itemize}

    The set of right $q$-supports of $I$, denoted $Z^R_I(q)$, is defined symmetrically by replacing \say{left} with \say{right}, and following $I$ in reverse from $w_r$ to $w_1$.
\end{definition}

\begin{observation}\label{obs_support-bounded-size-time}
  $Z^L_I(q)$ and $Z^R_I(q)$ have size at most $3\cdot 2^{q-1} = 2^{O(q)}$, and can be constructed in time $2^{O(q)} n^{O(1)}$.
\end{observation}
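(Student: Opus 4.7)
The plan is to prove both the size bound and the construction time by induction on $q$, following directly the recursive definition of $Z^L_I(q)$ (the argument for $Z^R_I(q)$ will be identical by the symmetry stated in \Cref{def_supports}).

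\textbf{Size bound.} For the base case $q=1$, the definition explicitly places $\{w_1\}$ and $\{w_2\}$ into $Z^L_I(1)$, and adds at most one additional singleton $\{w_h\}$ (when $w_1,w_2$ have a common neighbour and a leftmost non-neighbour exists). By \Cref{obs_unique-common-neighbour}, $w_1$ and $w_2$ have at most one common neighbour on $F$, so $w_h$ is well-defined when it exists, and $|Z^L_I(1)| \leq 3 = 3 \cdot 2^{0}$. For the inductive step, assume $|Z^L_I(q-1)| \leq 3 \cdot 2^{q-2}$. For every $B \in Z^L_I(q-1)$, the construction adds at most two new supports to $Z^L_I(q)$: the set $B \cup \{w_{i+1}\}$ (when $i<r$) and the set $B \cup \{w_h\}$ (when the highest-indexed elements $w_i$ and $w_{i+1}$ admit a common neighbour $u$ with a leftmost non-neighbour). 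Again by \Cref{obs_unique-common-neighbour}, $u$ is unique when it exists, so $w_h$ is unambiguously defined. Therefore $|Z^L_I(q)| \leq 2 \cdot |Z^L_I(q-1)| \leq 3 \cdot 2^{q-1}$, which concludes the induction.

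\textbf{Construction time.} We build the family $Z^L_I(1), Z^L_I(2), \dots, Z^L_I(q)$ iteratively. Computing $Z^L_I(1)$ requires identifying whether $w_1$ and $w_2$ share a neighbour along $F$ and, if so, scanning $I$ from left to right to find the leftmost non-neighbour of $u$; both are polynomial in $n$. For each subsequent level, we iterate over the at most $3 \cdot 2^{q-2}$ elements of $Z^L_I(q-1)$ and, for each $B$, perform polynomial-time operations: retrieving the highest-indexed element $w_i$ (constant time if we store it with $B$), testing whether $w_i$ and $w_{i+1}$ have a common neighbour on $F$, and, if so, searching $I$ for the leftmost non-neighbour $w_h$. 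Summing over all levels gives a total of $\sum_{j=1}^{q} 3 \cdot 2^{j-1} \cdot n^{O(1)} = 2^{O(q)} n^{O(1)}$ time.

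There is no real obstacle here: the statement follows immediately from the recursive definition once one observes that each element of $Z^L_I(q-1)$ produces at most two children in $Z^L_I(q)$, and that each such child is determined unambiguously thanks to \Cref{obs_unique-common-neighbour}. The argument for $Z^R_I(q)$ is identical, reading the boundary from $w_r$ to $w_1$.
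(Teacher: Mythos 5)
Your proof is correct and is exactly the intended argument: the paper states this as an unproved observation precisely because it follows immediately from the recursive structure of \Cref{def_supports}, with each $B \in Z^L_I(q-1)$ spawning at most two children (made unambiguous by \Cref{obs_unique-common-neighbour}), giving the bound $3\cdot 2^{q-1}$ and the stated construction time. Nothing to add.
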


We now show that the \textsl{angles} of a left (resp. right) stack coincide with some left (resp. right) support, that is, with some element of $Z^L_I(q)$ ($Z^R_I(q)$).
\begin{lemma}\label{lem_stacked-support-bound}
    Consider an interval $I$ and a multicompletion $X$.
    Let $\Xend_I^L,\Xend_I^R$ be the left and right stacks of $X$ on $I$, and $q_L = |W(\Xend_I^L)|, q_R = |W(\Xend_I^R)|$ be their number of angles, then:
    \begin{itemize}
        \item $W(\Xend_I^L) \in Z^L_I(q_L)$.
        \item $W(\Xend_I^R) \in Z^R_I(q_R)$.
    \end{itemize}
\end{lemma}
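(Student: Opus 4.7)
By symmetry I focus on the first claim $W(\Xend_I^L) \in Z^L_I(q_L)$. My plan is to prove by induction on $p = 1, \ldots, q_L$ that if $\{w_{i_1}, \ldots, w_{i_p}\}$ are the first $p$ distinct angles of $W(\Xend_I^L)$ taken in left-to-right order along $I$, then $\{w_{i_1}, \ldots, w_{i_p}\} \in Z^L_I(p)$. This induction precisely mirrors the inductive construction of $Z^L_I(p)$ from $Z^L_I(p-1)$: I only need to verify that the newly added angle $w_{i_p}$ matches one of the two branches of the construction, that is, either the immediate successor $w_{i_{p-1}+1}$, or the leftmost non-neighbour of a common neighbour of $w_{i_{p-1}}$ and $w_{i_{p-1}+1}$.

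For the base case $p=1$, let $e_1$ be the arc whose $w_{i_1}$-endpoint is the leftmost in $\Xend_I$, and let $u_1 \in V(F)$ be its other endpoint. If $i_1 \in \{1,2\}$, we are in $Z^L_I(1)$ directly. Otherwise, $w_{i_1}^{e_1}$ being stacked means that, for every $j \in [1,i_1[$, substituting its $w_{i_1}$-endpoint by $w_j$ fails to yield a multicompletion. Two failure modes are possible: either $u_1$ is already adjacent to $w_j$ in $D$ (creating a digon or a parallel arc with $D$), or the resulting chord creates a crossing inside $F$. Granted that the latter obstruction does not occur here (see below), $u_1$ is adjacent in $D$ to every $w_j \in \{w_1, \ldots, w_{i_1-1}\}$; in particular, $u_1$ is the unique common neighbour of $w_1$ and $w_2$ given by Observation~\ref{obs_unique-common-neighbour}. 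Since $u_1$ cannot be adjacent to $w_{i_1}$ in $D$ (else $e_1$ itself would not be in the multicompletion $X$), the angle $w_{i_1}$ is precisely the leftmost non-neighbour $w_h$ of $u_1$, so $\{w_{i_1}\} \in Z^L_I(1)$. The inductive step proceeds analogously: letting $e_p$ be the arc whose $w_{i_p}$-endpoint is the first in the list of $\Xend_I$ at angle $w_{i_p}$, its shift-left range is $[i_{p-1}, i_p[$; if $i_p = i_{p-1}+1$ we are in the successor branch of $Z^L_I(p)$, and otherwise the same common-neighbour analysis applied to the pair $(w_{i_{p-1}}, w_{i_{p-1}+1})$ places $\{w_{i_1},\ldots,w_{i_p}\}$ in the \say{jumped} branch, using the inductive hypothesis $\{w_{i_1}, \ldots, w_{i_{p-1}}\} \in Z^L_I(p-1)$.

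The main technical obstacle is the planarity claim used above: I must show that, whenever $u_p$ is non-adjacent to $w_j$ in $D$ (and $u_p \neq w_j$), placing the chord $(u_p, w_j)$ inside $F$ does not cross any other arc of $X_F$. The key observation is that, by definition of the stack and by the choice of $e_p$ as the first endpoint at $w_{i_p}$, the open subinterval $(w_{i_{p-1}}, w_{i_p})$ of $F$ contains no endpoint of $\Xend_I$. Consequently, continuously moving the $w_{i_p}$-endpoint of $e_p$ along the boundary of $F$ toward $w_j$ does not sweep across any endpoint of another arc of $X_F$; by the standard chord-separation characterization of crossings inside a disk, the resulting chord $(u_p, w_j)$ is crossing-free relative to the rest of $X_F$. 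This confirms that every obstruction to a left shift is of the common-neighbour form captured by the support construction, completing the induction.
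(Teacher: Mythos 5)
Your proof is correct and follows essentially the same route as the paper's: an induction on the number of distinct angles in a prefix of the stack, matching each new angle either to the successor branch or to the common-neighbour branch of the construction of $Z^L_I(q)$ via Observation~\ref{obs_unique-common-neighbour}. Your final paragraph makes explicit a point the paper leaves implicit — that within the shift range $[w_{i_{p-1}},w_{i_p}[$ no crossing with other arcs of $X_F$ can obstruct the shift, so adjacency in $D$ is the only obstruction — which is a welcome clarification rather than a divergence.
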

\begin{proof}
    We proceed by induction on $q$, showing that, for any prefix $\Omega$ of $\Xend_I$, such that $|W(\Omega)| = q$ and no endpoint of $\Omega$ can be shifted left, $W(\Omega) \in Z^L_I(q)$. 
    This implies the result for left stacks by \Cref{def_stack}, and the proof translates to right stacks by symmetry.
    Throughout the proof, we refer to arcs $(u,v)$ and $(v,u)$ as their underlying edge $uv$, and argue on the latter with no distinction on the orientation.

    If $q=1$, we have $W(\Omega) = \{w_j\}$.
    If $j=1$ or $j=2$, then indeed $\{w_j\} \in Z_W^L(1)$ by \Cref{def_supports}.
    Otherwise, $j \geq 3$, and we let $u w_j$ be the leftmost arc of $X_{w_j}$ at $w_j$.
    Since $u w_j$ cannot be shifted left, $u$ must be a common neighbour of $w_1,w_2$ in $D$, which is unique by \Cref{obs_unique-common-neighbour}. Moreover, $w_j$ is the leftmost non-neighbour of $u$ on $I$, yielding $\{ w_j \} \in Z_W^L(1)$.

    Assume the induction holds up to $q-1$, and let $\Omega$ be a prefix of $\Xend_I$ with $|W(\Xend_I)|=q$, such that no endpoint can be shifted left.
    Let $\Omega'$ be the maximal prefix of $\Omega$ on $q-1$ angles, which belongs to $Z_I^L(q-1)$ by induction.
    Let $w_i$ be the rightmost angle of $W(\Omega')$, and $w_j$ be the angle of $W(\Omega) \setminus W(\Omega')$, so $j>i$.
    If $j = i+1$ we know the angles of $\Omega$ correspond to a support of $Z_I^L(q)$ by \Cref{def_supports}.
    Otherwise, $j \geq i+2$, and we let $w_j^e \in \Omega$ be the leftmost endpoint of at $w_j$, say with $e=u w_j$.
    If $w_j^e$ cannot be shifted left, $u$ is in particular the unique common neighbour of $w_i$ and $w_{i+1}$ in $D[V(F)]$, and therefore $w_j$ is the leftmost non-neighbour of $u$, again ensuring $W(\Omega) \in Z_I^L(q)$. This concludes the induction, and shows the result since $\Xend_I^L$ is a prefix of endpoints that cannot be shifted left.
\end{proof}

\subsection{Existence of supported solutions}\label{ssec_supported_existence}

We begin by defining supported completions and solutions, then show every solution can be reconfigured through shifts into one that is supported.
\begin{definition}[Supported completions]\label{def_supported}
    Given a completion $X$, an interval $I$, with $|W(\Xend_I)|=q$, we say $X$ (or $\Xend_I$) is \emph{supported on $I$} if there exist $B^L \in Z_I^L(q),B^R\in Z_I^R(q)$ such that $W(\Xend_I) \subseteq B^L \cup B^R$. We then call $B^L \cup B^R$ a \emph{support} of $X$ on $I$.
    
    For any face $F$, $X$ (or $X_F$) is \emph{supported on $F$} if $X$ is supported on every local terminal and interval dipath of $F$.
    Then, a completion $X$ is \emph{supported} if it is supported in every face of $D$.
\end{definition}

We will routinely use the following trivial condition for a shift to maintain a solution.
\begin{observation}\label{obs_shifting-safe}
    Let $X$ be a solution, and $w^e \in \Xend_I$ for some interval $I$.
    If a shift of $w^e$ results on an angle in the same strong component of $D+X-e$ as $w^e$, then $X$ is still a solution.
\end{observation}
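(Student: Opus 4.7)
The plan is to argue by direct path-rerouting. Let $e$ be the arc with endpoint $w^e$, and let $e'$ denote the arc obtained after the shift, so that $e'$ has endpoint $(w')^{e'}$ in place of $w^e$ while the other endpoint, at some angle $u$, is unchanged. By \Cref{def_shift}, a shift preserves the multicompletion property by construction, so what remains is to verify that $D + X - e + e'$ is still strongly connected.

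I will treat the case $e = (u, w)$; the case $e = (w, u)$ is entirely symmetric. Let $C$ denote the strong component of $D + X - e$ containing both $w$ and $w'$, which exists by hypothesis. Consider any directed path $P$ in $D + X$ between two vertices $x$ and $y$. If $P$ does not use $e$, then $P$ still exists in $D + X - e + e'$. Otherwise, $P$ contains the sub-arc $u \to w$; we reroute it by replacing this step with $u \to w'$ via $e'$, followed by a directed walk from $w'$ to $w$ inside $C$, and then continuing along the remainder of $P$. This yields a directed walk from $x$ to $y$ in $D + X - e + e'$, so strong connectivity is preserved.

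No serious obstacle is expected here; the content of the observation is precisely to isolate the sufficient condition under which a shift preserves reachability, namely that rerouting through the strong component of the old endpoint is enabled. One minor subtlety is that the phrase ``$X$ is still a solution'' must be read within the multicompletion framework: the shift may introduce digons, parallel arcs, or loops entirely inside $X$, but these do not affect strong connectivity and can be cleaned up through the minimality argument of \Cref{lem_no_digon_multicompletion}.
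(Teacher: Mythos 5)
Your rerouting argument is correct and is exactly the justification the paper has in mind: the observation is stated without proof (the authors call it a ``trivial condition''), and replacing the use of $e$ on any path by $e'$ followed by a walk inside the common strong component of $D+X-e$ is the intended reasoning, with your closing remark about reading ``solution'' in the multicompletion framework (and invoking \Cref{lem_no_digon_multicompletion} to clean up) matching how the observation is actually applied later.
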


\subsubsection{Local terminals}

We now show that endpoints of a solution on an interval dipath can always be shifted onto a support.
\begin{lemma}\label{lem_supported-local-terminals}
    For any local terminal $I$, and any solution $X$, there exists a sequence of shifts of $\Xend_I$ resulting in a solution that is supported on $I$.
\end{lemma}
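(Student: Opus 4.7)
The plan is to iteratively apply left shifts to the endpoints of $\Xend_I$ until no further left shift is possible, then appeal to \Cref{lem_stacked-support-bound} to identify the resulting configuration with a left support. Without loss of generality we assume that $X$ is a minimal solution; otherwise we first replace $X$ with a minimal subset $X^*$ of its arcs such that $D+X^*$ is strongly connected, which is still a solution by \Cref{lem_no_digon_multicompletion}. By that same lemma, no arc of $X$ lies within the strong component $C$ of $D$ containing $V(I)$, and at most one arc of $X$ connects $C$ to any other strong component of $D$.

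The key step is to verify that each individual left shift of an endpoint $w_i^e \in \Xend_I$ preserves $X$ as a (minimal) solution. Write $e = (u, w_i)$, the case $e = (w_i, u)$ being symmetric. Minimality forces $u \notin V(I) = C$, so shifting $w_i^e$ to any $w_j \in V(I)$ cannot create a loop. Moreover, a parallel arc or digon produced by the shift would require a second arc of $X$ between $C$ and the strong component of $u$, contradicting the minimality of $X$. Hence the shift introduces no internal multi-edge, digon, or loop in $X$. Strong connectivity is preserved by \Cref{obs_shifting-safe}, since $w_i$ and the target $w_j$ both lie in the strong component of $D+X-e$ that contains $V(I) = C$. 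Together with the multicompletion property guaranteed by \Cref{def_shift}, this ensures that $X$ remains a (minimal) solution after each shift.

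We now iterate left shifts. Since each one strictly decreases the sum of angle indices of $\Xend_I$ within $I$, the process terminates. At termination, no endpoint admits a left shift, so $\Xend_I$ equals its own left stack $\Xend_I^L$. By \Cref{lem_stacked-support-bound}, we then have $W(\Xend_I) \in Z^L_I(q)$ with $q = |W(\Xend_I)|$. Setting $B^L := W(\Xend_I)$ and choosing any $B^R \in Z^R_I(q)$ — which exists since $q$ is at most the number of angles of $I$ — we obtain $W(\Xend_I) \subseteq B^L \subseteq B^L \cup B^R$, witnessing that $X$ is supported on $I$.

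The main obstacle is ensuring that each shift preserves $X$ as a proper completion, not merely as a multicompletion: this is exactly where the minimality assumption and the local terminal structure of $I$ are essential, as together they rule out the creation of any parallel arc, digon, or loop entirely inside $X$ during the shifting process.
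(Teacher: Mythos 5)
Your proof is correct and follows essentially the same route as the paper's: iterate left shifts (which stay within the strong component of the local terminal, so \Cref{obs_shifting-safe} preserves strong connectivity), argue termination via the decreasing sum of indices, and conclude with \Cref{lem_stacked-support-bound} that the resulting left stack is a left support. Your additional preprocessing to a minimal solution, used with \Cref{lem_no_digon_multicompletion} to rule out digons, parallel arcs, and loops inside $X$ after a shift, is a reasonable (and slightly more careful) way to guarantee the result is a genuine completion rather than only a multicompletion, a point the paper's proof leaves implicit.
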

\begin{proof}
    Let $X$ be such a solution, and $\Xend_I$ be its endpoints on $I = [w_1,w_r]$.
    As long as $\Xend_I$ contains an endpoint $w_i^e$ that can be shifted left on $I$, we perform that shift.
    At any step, $w_i^e$ is shifted onto some $w_m \in I$, which maintains a solution thanks to \Cref{obs_shifting-safe}. Indeed, $w_m$ belongs to the same strongly connected component as $w_i$ in $D$, thus also in $D+X-e$.
    The process terminates, because the sum of the indices of endpoints $\Xend_I$ on $I$ strictly decreases at each step.
    Eventually, we obtain a solution $X$ where no endpoint of $\Xend_I$ can be shifted left on $I$, meaning $\Xend_I$ coincides with its left stack $\Xend_I^L$. Then, \Cref{lem_stacked-support-bound} yields that $W(\Xend_I^L)$ belongs to $Z_I^L(|W(X_I)|)$.
\end{proof}

\subsubsection{Interval dipaths}

Unlike for local terminals, shifting the endpoints of a solution $X$ on an interval dipath $P$ does not always maintain strong connectivity. This means we cannot expect to reconfigure $\Xend_P$ into a left (or right) stack.
In fact, $\Xend_P$ may not even be reconfigured to be the union of its left and right stacks.
In the following, we show that there exists a sequence of shifts reconfiguring all endpoints onto a support, and eventually achieving strong connectivity.
\begin{lemma}\label{lem_supported-interval-dipaths}
    For any interval dipath $P$, and any solution $X$, there exists a sequence of shifts of $\Xend_P$ resulting in a solution that is supported on $P$.
\end{lemma}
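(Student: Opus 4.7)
The plan is to adapt the argument of \Cref{lem_supported-local-terminals} via a two-phase shifting procedure: safe left-shifts from the left, then safe right-shifts from the right. Here \say{safe} means preserving the solution status of $X$, which is the main new difficulty because, unlike on local terminals, multicompletion-valid shifts on interval dipaths may break strong connectivity.

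In the first phase, starting from $X$, we iterate over $\Xend_P$ left to right and perform any left-shift that preserves the solution. This terminates by a potential argument (the sum of endpoint indices strictly decreases). Let $\Omega_L$ be the maximal prefix of $\Xend_P$ obtained at termination; we will argue that each endpoint of $\Omega_L$ cannot even be left-shifted as a multicompletion operation, so that $W(\Omega_L)$ forms a left stack and lies in $Z_P^L(q_L)$ by \Cref{lem_stacked-support-bound} (where $q_L = |W(\Omega_L)|$). In the second phase, we symmetrically right-shift the remaining endpoints $\Xend_P \setminus \Omega_L$, from right to left. The key claim is that every such shift is safe: the reason an endpoint $w^e$ was excluded from $\Omega_L$ is that left-shifting it would destroy some strong connectivity requirement, and thanks to the forward reachability within $P$ provided by \Cref{lem_paths-between-localT}, this same requirement is preserved when right-shifting instead. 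After both phases, the angles of $\Omega_L$ and of $\Xend_P \setminus \Omega_L$ form, respectively, a left and a right stack; enlarging them to supports of size $q = |W(\Xend_P)|$ in $Z_P^L(q)$ and $Z_P^R(q)$---possible because the construction in \Cref{def_supports} only extends existing supports---yields that $X$ is supported on $P$.

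The main obstacle is proving that the second-phase right-shifts are safe. This requires a case analysis on the arc $e$ at the endpoint being shifted, distinguishing (i) backward arcs within $P$, handled via \Cref{obs_backward}; (ii) arcs leaving $P$, for which \Cref{lem_paths-between-localT} immediately provides substitute forward paths; and (iii) arcs entering $P$, the trickiest case, which requires an argument analogous to the pushing scheme of \Cref{lem_supported-sol-dir-psca}, analyzing the terminal components of $D+X-e$ and leveraging that the endpoint lies on an interval dipath between a local source and a local sink. Throughout the process, \Cref{lem_gather} is invoked to coordinate shifts of multiple endpoints sharing the same angle so that each intermediate configuration remains a multicompletion, while \Cref{lem_no_digon_multicompletion} ensures that the final minimal multicompletion we obtain is a genuine solution.
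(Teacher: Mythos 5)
There is a genuine gap in your second phase. The argument needs every right shift performed in phase two to preserve strong connectivity, and you justify this by saying that an endpoint was excluded from $\Omega_L$ \emph{because} its own left shift would destroy connectivity, so that forward reachability along $P$ (\Cref{lem_paths-between-localT}) rescues the right shift. But an endpoint can be excluded from $\Omega_L$ for purely positional reasons: the left stack is a \emph{prefix} (\Cref{def_stack}), so as soon as one endpoint $w_s^{e_s}$ is stuck --- left-shiftable as a multicompletion but not safely, typically the tail of an out-arc whose strong component in $D+X-e_s$ is a sink --- every endpoint to its right is excluded from $\Omega_L$, whatever arc it belongs to. In particular the head of an in-arc $e=(u,w_i)$ piled up behind $w_s^{e_s}$ ends up in your phase two, and right-shifting such a head is exactly the unsafe direction: if the strong component of $w_i$ in $D+X-e$ is a source (it then contains $S\cup[w_1,w_i]$) whose only entering arc is $e$, moving the head to some $w_{i'}$ outside that component disconnects it, and reachability on $P$ goes the wrong way to repair this. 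So the per-shift safety claim underpinning phase two is false, and the case analysis on the type of $e$ cannot fix it, because the obstruction comes from two mutually blocking endpoints of opposite ``polarities'', not from any single arc.

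This residual configuration is precisely what the paper's proof is organized around. It grows the left and right stacks \emph{simultaneously} (shifting the leftmost middle endpoint left or the rightmost middle endpoint right, whichever keeps a solution), and when a non-empty middle set $\Xend^M$ survives, it does not attempt to move its endpoints safely one at a time. Instead it removes the whole set $X^M$ at once, proves that no terminal component of $D+X-X^M$ meets $W(\Xend^M)$ (\Cref{claim:no-terminal-WM}), deduces that all endpoints of $\Xend^M$ may be replaced by a \emph{single} common angle $w_g$ (\Cref{claim:reconf-common-endpoint}), and carries out the intermediate shifts via \Cref{lem_gather} purely at the multicompletion level, accepting that strong connectivity is temporarily lost and restored only at the very end. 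Your proposal has no analogue of this ``break now, repair globally later'' step; analyzing the terminal components of $D+X-e$ for one arc at a time, as in the pushing scheme of \Cref{lem_supported-sol-dir-psca}, is not enough here because multicompletion obstructions can strand endpoints in the middle of $P$, which is the very reason stacks and supports were introduced.
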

\begin{proof}
    Let $P = [w_1,w_r]$. Consider the endpoints of $\Xend_P$ ordered from left to right along $P$, and recall this ordering is preserved by shifts.
    Throughout the following, let $\Xend^L = \Xend_P^L$ and $\Xend^R = \Xend_P^R$ be the left and right stacks of $X$ on $P$, and define $\Xend^M = \Xend_P \setminus (\Xend^L \cup \Xend^R)$.

    The first step in our reconfiguration is to apply shifts to maximize left and right stacks.
    As long as either the leftmost endpoint of $\Xend^M$ can be shifted left, or the rightmost endpoint can be shifted right, while maintaining a solution, perform any such shift and update $\Xend^L,\Xend^R,\Xend^M$ accordingly.
    Recall that \Cref{lem_no_digon_multicompletion} allows us to ensure the multicompletion resulting from a shift is indeed a solution.
    This process clearly terminates since at any step $|\Xend^M|$ strictly decreases.
    If this ends with $\Xend^M = \emptyset$, we have $\Xend_I = \Xend^L_I \cup \Xend^R_I$. Since $\Xend_I$ corresponds to at most $2|X_P|$ angles, \Cref{lem_stacked-support-bound} yields that it is contained in some $B^L \cup B^R$ for $B^L \in Z^L_P(2|X_P|)$ and $B^R Z^R_P(2|X_P|)$, which concludes this case.

    If the process above doesn't succee, we have $\Xend^M \neq \emptyset$, and we show in the following how to use a sequence of shifts reconfiguring $\Xend^M$ onto a common endpoint.  
    Let $X^M$ be the set of arcs having at least one endpoint in $\Xend^M$, and $D' = D + X - X^M$.
    Note that any terminal component of $D'$ contains an endpoint of $X^M$, and thanks to our maximization of $\Xend^L$ and $\Xend^R$, we obtain the following. 
    \begin{claim}\label{claim:no-terminal-WM}
        No terminal component of $D'$ intersects $W(\Xend^M)$.
    \end{claim}
    \begin{subproof}{the claim}
        Assume otherwise that an arc $a\in X$ has an endpoint at an angle $w_i$ that is contained in a sink component $T'$, without loss of generality.
        Now, \Cref{lem_paths-between-localT} yields a directed path from $w_i$ to $T$ in $D$, meaning $[w_i,w_r] \subseteq T'$.
        Going back to $D+X$, let us re-consider the rightmost endpoint $w_j^e$ of $\Xend^M$, which we could not shift right in the first step.
        Since $j \geq i$, we also have $[w_j,w_r] \in T'$ in $D'$, and a fortiori $[w_j,w_r]$ lies in the same strong component of $D-e$.
        In particular, irrespective of the orientation of $e$, the right shift of $w_j^e$ must result in a strongly connected completion. This contradicts the maximality of $\Xend^R$ and shows the claim.
    \end{subproof}
    

    Rephrasing~\Cref{claim:no-terminal-WM}, all terminal components of $D'$ are incident only to the endpoints of $X^M$ that are not on $P$.
    In particular, we obtain the following corollary.
    \begin{claim}\label{claim:reconf-common-endpoint}
        Substituting all endpoints in $\Xend^M$ with \textsl{any} $w_g \in P$ maintains strong connectivity.
    \end{claim}
    \begin{subproof}{the claim}
        Fix $X^M$, and let $X^M_g$ be the set of arcs obtained from $X^M$ by substituting each endpoint of $\Xend^M$ by $w_g$.
        Because $D' + X^M$ is strongly connected, for any source $S'$ and sink $T'$ of $D'$, $X^M$ contains an arc from $T'$ to (some angle of) $\Xend^M$ and from $\Xend^M$ to $S'$.
        In $D'+X^M_g$, again for any $S',T'$, these arcs have been replaced by arcs from $T'$ to $w_g$ and from $w_g$ to $S'$. In particular in $D'+X^M_g$, any initial sink of $D'$ can reach any initial source through $w_g$, meaning $D'+X^M_g$ is strongly connected.
    \end{subproof}
    
    Now, consider our reconfigured solution from the first step, maximizing $\Xend^L$ and $\Xend^R$, and let us look for an angle $w_g$ to substitute all endpoints of $\Xend^M$ using \Cref{claim:reconf-common-endpoint}. 
    Take the endpoints of $\Xend^M$ from left to right, and successively shift them left until all belong to the (new) left stack of $X$ on $P$.
    This maintains a multicompletion, and now $\Xend^L$ along with the (updated) $\Xend^M$ are by definition in the left stack of $X$ on $P$, meaning they belong to some $B^L_q \in Z^L_P(q)$ by \Cref{lem_stacked-support-bound}.
    Now, this maintains a multicompletion, but may break the strong connectivity of $D+X$. To recover this, we repeatedly apply \Cref{lem_gather} to pairs of consecutive angles in $W(\Xend^{M})$, which eventually shifts all the endpoints of $\Xend^{M}$ onto some common angle $w_g$, which must have been part of $B^L_q$.
    This yields a multicompletion, which is strongly connected by \Cref{claim:reconf-common-endpoint}, and therefore a solution.
    All angles of $\Xend^L$ and $\Xend^M$ now belong to $B^L_q$, and recall all angles of $\Xend^R$ belong to the right stack of $X$ on $P$, therefore to some $B^R_q$, which achieves to show the result.
\end{proof}

Combining the results of this section allows us to restrict our search to supported solutions.
\begin{corollary}\label{cor:existence-supported-solution}
    Any positive instance $(D,k)$ admits a supported solution.
\end{corollary}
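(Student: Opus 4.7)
The plan is to start from an arbitrary solution $X$ of size at most $k$ and reconfigure it, one interval at a time, into a supported solution. Going through every face $F$ of $D$, and within $F$ through each local terminal and each interval dipath in any order, I would apply \Cref{lem_supported-local-terminals} on each local terminal and \Cref{lem_supported-interval-dipaths} on each interval dipath, each time replacing $X$ by the solution returned by the invoked lemma. Since $D$ has finitely many faces, each with finitely many local terminals and interval dipaths, this process terminates after finitely many stages.

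The key point making this iteration correct is non-interference between intervals. By \Cref{def_shift}, a shift on an interval $I$ substitutes the angle of a single endpoint $w_i^e \in \Xend_I$ for some other angle on $I$, while the opposite endpoint of the arc $e$ is left untouched. Consequently, for any interval $I' \neq I$ — even one lying in a different face — the set $\Xend_{I'}$, and in particular its angle set $W(\Xend_{I'})$, is unchanged by any shift performed on $I$. Therefore, once the running solution is supported on some interval $I'$ in the sense of \Cref{def_supported}, this property persists through all subsequent reconfigurations targeting other intervals.

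Combining these two ingredients, after processing all intervals the current $X$ is a solution that is supported on every local terminal and every interval dipath of every face of $D$, and is hence a supported solution. The only subtle step I expect to spell out carefully is the non-interference claim above, which is essentially bookkeeping on \Cref{def_shift} but is what ensures the inductive process does not undo progress already made on previously handled intervals. Beyond this, no new combinatorial ingredient should be needed on top of the two preceding reconfiguration lemmas.
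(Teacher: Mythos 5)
Your proposal is correct and matches the paper's own proof: both iterate over all faces and, within each face, over all local terminals and interval dipaths, invoking \Cref{lem_supported-local-terminals} and \Cref{lem_supported-interval-dipaths} respectively, and both justify the iteration by observing that reconfiguring an interval $I$ only touches the endpoints $\Xend_I$, so supportedness already established on other intervals is preserved. No substantive difference.
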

\begin{proof}
    Let $D$ be a plane digraph, and $X$ be an arbitrary solution to $D$.
    It suffices to iterate over all faces $F$ of $D$, and (independently) reconfigure $X_F$ into a supported completion.
    Successively consider all local terminals and interval dipaths, and for any such $I$, apply either \Cref{lem_supported-local-terminals} or \Cref{lem_supported-interval-dipaths} to yield a solution that is supported on $I$.
    When handling interval $I$, the only endpoints of $X$ that are reconfigured are $\Xend_I$, meaning $X$ remains supported on $I$ throughout all reconfigurations of $X_F$.
    Then, since the endpoints of $X_F$ are not reconsidered for other faces, we can repeat the process independently for all faces.
\end{proof}

\subsection{Bounding supported completions}\label{ssec_supported_bound}

To find supported solutions, we look for their restrictions to faces of the instance, that is, their supported completions. The following lemma allows us to enumerate those for any face.
\begin{lemma}\label{lem_bound-supported-completions-face}
    For any face $F$ with $\lt$ local terminals, the number of supported completions of $F$ with at most $\ell$ arcs is at most $2^{O(\lt + \ell)}$. Furthermore, those can be enumerated in time $2^{O(\lt + \ell)} n^{O(1)}$.
\end{lemma}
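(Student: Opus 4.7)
The plan is to enumerate the supported completions $X_F$ of size at most $\ell$ by successively fixing three pieces of data that together determine $X_F$: for each of the $t \le 2\lt$ intervals $I_1,\ldots,I_t$ of $F$ (the local terminals and the interval dipaths), the number $q_j = |W(\Xend_{I_j})|$ of distinct angles of $I_j$ hosting an endpoint of $X_F$; then the actual witnessing supports $B^L_j \in Z^L_{I_j}(q_j)$ and $B^R_j \in Z^R_{I_j}(q_j)$ required by \Cref{def_supported} on $I_j$; and finally the oriented outerplanar (multi)graph realized by $X_F$ on the cyclic set of angles $A := \bigcup_j (B^L_j \cup B^R_j)$ along the boundary of $F$. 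Since $|X_F| \le \ell$, any such completion has at most $2\ell$ endpoints in $F$, so $\sum_j q_j \le 2\ell$ and $|A| \le \sum_j 2 q_j \le 4\ell$.

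I will then bound each of the three blocks independently. The number of admissible tuples $(q_1,\ldots,q_t)$ with $\sum_j q_j \le 2\ell$ is at most $\binom{2\ell + t}{t} = 2^{O(\ell + \lt)}$. For the second block, \Cref{obs_support-bounded-size-time} gives $|Z^L_{I_j}(q_j)| \cdot |Z^R_{I_j}(q_j)| = 2^{O(q_j)}$ (and $1$ when $q_j = 0$), so multiplying over intervals yields $2^{O(\sum_j q_j)} = 2^{O(\ell)}$ choices. For the third block I will mirror the combinatorial argument of \Cref{lem_guess-alt-dir}: any outerplanar graph drawn inside $F$ on the cyclic sequence of $|A| = O(\ell)$ angles is a subgraph of a triangulation of an $|A|$-gon; the number of such triangulations is the Catalan number $C_{|A|-2} = 2^{O(\ell)}$, and for each of its $O(\ell)$ edges we choose either a non-arc or one of the two orientations, giving $3^{O(\ell)} = 2^{O(\ell)}$ options, of which we keep those that avoid parallel arcs and digons with $D$. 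Multiplying the three bounds yields $2^{O(\ell + \lt)}$ supported completions, as claimed.

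The enumeration procedure mirrors the counting: enumerate the compositions $(q_j)$ in output-linear time, build the sets $Z^L_{I_j}(q_j)$ and $Z^R_{I_j}(q_j)$ in time $2^{O(q_j)} n^{O(1)}$ via \Cref{obs_support-bounded-size-time}, list triangulations of the $|A|$-gon in $2^{O(\ell)}$ time (e.g.\ via Bespamyatnikh's algorithm~\cite{bespamyatnikhEfficientAlgorithmEnumeration2002}), decorate each edge with one of three choices, and finally verify in polynomial time that the resulting $D + X_F$ is plane and oriented before emitting the completion. The main subtlety I expect is the degenerate case of a strong face ($\lt = 0$), where $F$ has no interval and the support-based bound becomes vacuous; I will handle it separately by noting that a minimal supported solution does not place arcs inside a strong face, so only the empty completion contributes. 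A secondary point to verify is that the supports attached to distinct intervals, although they may share underlying vertices, always correspond to disjoint angles of the boundary, which is immediate from the fact that intervals are maximal pairwise disjoint subwalks; this is what guarantees that enumerating oriented outerplanar graphs on $A$ truly captures every supported completion embeddable in $F$.
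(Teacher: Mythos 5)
Your proof is correct and follows essentially the same route as the paper's: split the boundary into local terminals and interval dipaths, distribute the endpoint budget over them with a stars-and-bars count, bound the choice of supports per interval via \Cref{obs_support-bounded-size-time}, and count/enumerate oriented outerplanar graphs on the resulting $O(\ell)$ angles via Catalan numbers and Bespamyatnikh's enumeration, exactly as in \Cref{lem_guess-alt-dir}. The only differences are cosmetic — you parametrize by the number of distinct angles $q_j$ rather than labelled endpoints, and you explicitly flag the strong-face and angle-disjointness corner cases that the paper leaves implicit.
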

\begin{proof}
    Let $\lt=\lt(F)$, and split the boundary of $F$ into its local terminals and interval dipaths $(I_i)_i$ with $1 \leq i \leq 2\lt$. Note that the angles of these intervals partition $W(F)$.
    There are at most ${{2\lt + 2 \ell} \choose 2\lt} \leq 2^{O(\ell + \lt)}$ ways of choosing the numbers $(p_i)_i$ of (labelled) endpoints of a completion $X$ of size $\ell$ across all the $(I_i)_i$, such that $\sum_i p_i = 2 \ell$. Taking into account for completions with at most $\ell$ arcs factors in another $2^{\ell}$, still yielding $2^{O(\ell + \lt)}$ possible choices of $(p_i)_i$ such that $\sum_i p_i \leq 2 \ell$.
    Then, for each $(p_i)_i$, there are at most $3^2 \cdot 2^{2 p_i}$ ways of taking a support $B_i$ of size $2 p_i$ for $I_i$.
    Indeed, by \Cref{def_supported}, $B_i$ is the union of an element of $Z_I^L(p_i)$ and one of $Z_I^R(p_i)$, which by~\Cref{obs_support-bounded-size-time}, are each of size at most $3 \cdot 2^{p_i-1}$, and can be enumerated in time $2^{O(p_i)} n^{O(1)}$.
    There are thus at most $3^{2 \ell} 2^{\sum_i 2 p_i} = 2^{O(\ell)}$ such choices across all intervals, for any fixed $(p_i)_i$.
    Combining the above, there are $2^{O(\lt + \ell)}$ sets of at most $4 \ell$ angles in $F$ onto which a supported completion can be incident, and those can be enumerated in time $2^{O(\lt + \ell)} n^{O(1)}$.

    Taking any such set $U$, the number of different possible completions $X$ on $U$ is at most the number of distinct outerplanar oriented graphs on $U$.
    Recall that the number of (labelled) outerplanar triangulations on $4 \ell$ vertices is known to be $C_{4 \ell-2} = O(4^{4 \ell}) = 2^{O(\ell)}$, thanks to Walkup~\cite{walkupNumberPlaneTrees1972}.
    These can all be enumerated with $O(\log \log \ell)$ delay, and in particular $2^{O(\ell)} n^{O(1)}$ time, using the algorithm of Bespamyatnikh~\cite{bespamyatnikhEfficientAlgorithmEnumeration2002}.
    Then, to recover all labelled outerplanar oriented graphs, iterate over all triangulations, and for each edge $uv$ within one, choose either non-adjacency, the arc $(u,v)$ or the arc $(v,u)$. Since there are $2 \lt - 3$ edges in each triangulation, there are  $3^{2 \ell} = 2^{O(\ell)}$ such choices for each.
    In total, this yields $2^{O(\ell)}$ different outerplanar oriented graphs on $U$, which can be enumerated in $2^{O(\ell)} n^{O(1)}$ time.
    All in all, taking into account the enumeration of all possible angle sets $U$, there are at most $2^{O(\lt + \ell)}$ supported completions of $F$ with $\ell$ arcs. Moreover, those can be enumerated in time $2^{O(\lt + \ell)} n^{O(1)}$.

\end{proof}

Because there are only a bounded number of local terminals across alternating faces, we can then deduce a bound on the number of supported completions over all of them.
\begin{corollary}\label{cor:branch-alternating}
    Let $(D,k)$ be an instance of \sdpsca with $|\cT(D)| \leq 2k$, and $\mathcal{AF}$ be the set of alternating faces of $D$. Then, the number of supported completions of $\mathcal{AF}$ of size at most $k$ is at most $2^{O(k)}$, and those can be enumerated in $2^{O(k)}n^{O(1)}$ time.
\end{corollary}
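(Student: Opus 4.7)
The plan is to combine Lemma \ref{lem_bound-lt-alt-oriented} with Lemma \ref{lem_bound-supported-completions-face} and enumerate completions face-by-face, with a budget distribution step. Since $|\cT(D)| \leq 2k$, Lemma \ref{lem_bound-lt-alt-oriented} immediately gives $\sum_{F \in \AF} \lt(F) \leq 8k - 8$. Because each alternating face contributes at least $4$ to that sum, we also deduce $|\AF| \leq 2k - 2$.

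A supported completion of $\AF$ with total size at most $k$ is determined by the data of a size $\ell_F \geq 0$ with $\sum_{F \in \AF} \ell_F \leq k$, together with a supported completion of each $F \in \AF$ having exactly $\ell_F$ arcs. I would first bound the number of admissible tuples $(\ell_F)_{F \in \AF}$ by a stars-and-bars count: it is at most $\binom{k + |\AF|}{|\AF|} \leq \binom{3k}{k} = 2^{O(k)}$.

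Then, for each fixed tuple $(\ell_F)_F$, Lemma \ref{lem_bound-supported-completions-face} yields at most $\prod_{F \in \AF} 2^{O(\lt(F) + \ell_F)}$ supported completions. Taking logarithms, the exponent is $O\!\left(\sum_F \lt(F) + \sum_F \ell_F\right) \leq O(8k + k) = O(k)$, so the product is $2^{O(k)}$. Multiplying by the number of tuples gives $2^{O(k)} \cdot 2^{O(k)} = 2^{O(k)}$ supported completions of $\AF$ of size at most $k$.

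For the enumeration, I would first identify $\AF$ and compute $\lt(F)$ for each $F \in \AF$ in polynomial time, abort with a \no-answer if $\sum_F \lt(F) > 8k - 8$, then iterate over the $2^{O(k)}$ tuples $(\ell_F)_F$ and, for each, enumerate the supported completion of each face independently via the $2^{O(\lt(F) + \ell_F)} n^{O(1)}$-time procedure of Lemma \ref{lem_bound-supported-completions-face}. Aggregating these across faces is a straightforward Cartesian product, and the overall running time remains $2^{O(k)} n^{O(1)}$. There is no real obstacle beyond bookkeeping: the only subtlety is ensuring that both the budget distribution and the per-face enumeration contribute only $2^{O(k)}$, which follows because $\sum_F \lt(F)$ and $\sum_F \ell_F$ are each $O(k)$.
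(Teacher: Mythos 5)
Your proposal is correct and follows essentially the same route as the paper's proof: both combine \Cref{lem_bound-lt-alt-oriented} (with $|\cT(D)|\leq 2k$) to bound $\sum_{F\in\AF}\lt(F)$ and $|\AF|$ by $O(k)$, then branch over budget distributions $(\ell_F)_F$ with $\sum_F\ell_F\leq k$ and apply \Cref{lem_bound-supported-completions-face} face by face, so that the product of the per-face counts is $2^{O(\sum_F(\lt(F)+\ell_F))}=2^{O(k)}$. The only difference is presentational (you make the stars-and-bars count explicit), so there is nothing to add.
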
 
\begin{proof}
    Since $\lt(F) \geq 4$ for every alternating face, $\sum_{F\in \AF(D)} \lt(F) \leq 4 |\cT(D)| - 8$ by~\Cref{lem_bound-lt-alt-oriented} and $\cT(D) \leq 2k$ by assumption, there are at most $2k$ alternating faces $(F_i)$.
    To enumerate all supported completions, we first branch over all possible choices of a number of arcs $k_i$ to complete $F_i$, such that $\sum_i k_i \leq k$, which is $2^{O(k)}$.
    For each choice of such $(k_i)_i$, and each $F_i$, \Cref{lem_bound-supported-completions-face} gives $2^{O(\lt(F_i) + k_i)}$ supported completions of $F_i$ with $k_i$ arcs.
    Combining possible solutions across all of $\AF$ yields $2^{O(\sum_i (\lt(F_i) + k_i))} = 2^{O(k)}$, by \Cref{lem_bound-lt-alt-oriented} and since $\sum_i k_i \leq k$. 
    These can be enumerated in $2^{O(k)}n^{O(1)}$ time thanks to~\Cref{lem_bound-supported-completions-face}.
\end{proof}

For the case of simple faces, $\lt=2$, meaning the bound of~\Cref{lem_bound-supported-completions-face} only depends on the size of the completion. The following will imply that, for our purposes, those are also bounded by an absolute constant.
\begin{lemma}\label{lem_simple_k_max}
     Let $D$ be a plane oriented graph, and $X$ be a minimum solution of $D$, then for any simple face $F$ we have $|X_F|\le 3$.
\end{lemma}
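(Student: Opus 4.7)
The plan is to argue by contradiction: suppose some simple face $F$ has $|X_F|\ge 4$ in a minimum solution $X$, and construct a strictly smaller solution. Write $S,T$ for the local source and sink of $F$, and $P_1,P_2$ for its two interval dipaths, each oriented from $S$ to $T$. By Lemma~\ref{lem_paths-between-localT}, in $D$ every vertex of $V(F)$ is reachable from any vertex of $S$ and reaches any vertex of $T$. Consequently, within $V(F)$ the only reachability that arcs of $X_F$ can add to $D$ is from $T$ back to $S$ (and, transitively, between interval-dipath vertices in the backward direction), realized by directed walks inside $F$ alternating chords of $X_F$ with forward $D$-subpaths along $P_1\cup P_2$.

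The core step is to show that any backward $T$-to-$S$ walk $R$ through $F$ can be shortened to use at most three chords of $X_F$. Indeed, if two chords of $R$ both have an endpoint on the same interval dipath $P_i$, at positions $p$ and $p'$ with $p$ preceding $p'$ in the forward direction, Lemma~\ref{lem_paths-between-localT} provides a forward $D$-subpath from $p$ to $p'$ along $P_i$; the portion of $R$ between $p$ and $p'$ can then be replaced by this $D$-path, strictly decreasing the chord-count of $R$. Iterating this simplification, $R$ visits each of $P_1,P_2$ at most once and uses at most three chords of $X_F$: at most one leaving $T$, at most one crossing between $P_1$ and $P_2$, and at most one entering $S$.

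Letting $R^*\subseteq X_F$ denote these at most three chords, set $X'=(X\setminus X_F)\cup R^*$. Then $X'\subsetneq X$, and $D+X'$ remains plane and oriented since $X'\subseteq X$. Moreover, $D+X'$ is strongly connected: $V(F)$ is strongly connected in $D+R^*$ (thanks to the backward walk $R^*$ together with forward $D$-paths along $P_1\cup P_2$), and the arcs of $X\setminus X_F$ provide the same external connectivity between $V(F)$ and the rest of $D$ as in $D+X$. This contradicts the minimality of $X$ and proves $|X_F|\le 3$.

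The main obstacle is handling the case where $X_F$ does not by itself realize a full backward $T$-to-$S$ walk inside $F$ but contributes only ``partial'' backward chords, i.e. chords of the form $T$-to-interior or interior-to-$S$ whose usefulness comes from combining with external arcs of $X\setminus X_F$. Here a careful exchange argument is needed: any such partial chord can be bypassed by going through a genuine external $T$-to-$S$ path (together with the relevant forward $D$-paths in $P_1\cup P_2$), which by minimality of $X$ constrains $X_F$ to contain at most three chords even in this regime.
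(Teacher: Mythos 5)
Your high-level plan---replace $X_F$ by a small arc set that restores a $T$-to-$S$ connection inside $F$, then observe that once $V(F)$ is strong every discarded arc of $X_F$ can be rerouted---is the same as the paper's, and the final rerouting step is sound. But the two gaps in the middle are real. First, the obstacle you defer to the last paragraph is not a side case: $S$ and $T$ are only \emph{local} terminals of $F$, so the chords of $X_F$, combined with forward $D$-subpaths on $P_1\cup P_2$, need not realize any $T$-to-$S$ walk inside $F$; they may only connect intermediate vertices whose relevance comes from arcs of $X\setminus X_F$ lying in other faces. The paper resolves this by working in $D'=D+X-X_F$ and using \Cref{lem_paths-between-localT} to show that $D'$ has a single source component (containing $S$) and a single sink component (containing $T$), so that \emph{any} new $T$-to-$S$ path drawn in $F$ makes $D'$ strong. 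Your ``a careful exchange argument is needed'' is precisely the missing content, and ``by minimality of $X$'' is not a substitute for it.

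Second, the shortening claim is false as stated. Consider $X_F=\{(t,w_a),(w_a,u_i),(u_i,w_d),(w_d,s)\}$ with $t\in T$, $s\in S$, $w_d$ preceding $w_a$ on $P_1$, and $u_i\in P_2$: this is a non-crossing path of four chords, the unique $T$-to-$S$ walk through $F$ uses all four of them, and your reduction rule never fires, because no earlier landing point precedes a later take-off point in the forward order. Your count ``one chord leaving $T$, one crossing, one entering $S$'' overlooks walks that cross from $P_1$ to $P_2$ and back. To get below four arcs in such a configuration one must \emph{construct new} arcs not in $X_F$ (here $(t,w_d)$ or $(w_a,s)$), and then verify that they create no digon with $D+X-X_F$ --- this digon-avoidance via outerplanarity is exactly the case analysis at the heart of the paper's proof, and it is the step your subset-of-$X_F$ strategy was designed to sidestep. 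As written, your argument establishes at best $|X_F|\le 4$, and only in the special case where the backward walk exists inside $F$.
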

\begin{proof}
    Consider $D' = D+X-X_F$, in which $F$ must be a simple face, and to which $X_F$ is a solution of minimal size.
    We show $D'$ admits a solution of size at most three consisting of arcs embedded in $F$. Substituting $X_F$ for this solution in $X$ yields the result.
    If $F$ is strong in $D'$, then $D'$ is already strongly connected meaning $X_F = \emptyset$, so we can assume $F$ to be simple.
    Let then $S,T$ be the local source and sink of $F$ in $D'$, and $P_1,P_2$ be the interval dipaths from $S$ to $T$.
    We will show that we can always add at most three new arcs in $F$ forming a path from $T$ to $S$, which yields strong connectivity of $V(F)$ and thus a solution of desired size to $D'$.
    
    First, note that if at least one pair of vertices between $S$ and $T$ is a non-arc in $D$, we can add an arc from $T$ to $S$ as a solution of size one.
    Now, we consider $X_F$, which by minimality contains no arc within $T$, nor within $S$, according to \Cref{lem_no_digon_multicompletion}.
    We then know $X_F$ contains an arc $(t,w)$ with $t \in T$ to $w \in P_i$, and one $(w',s)$ with $w' \in P_j$ and $s \in S$ for $i,j \in \{1,2\}$.

    If the addition of $(t,w),(w',s)$ to $D'$ already renders $V(F)$ strong, in particular if $w$ and $w'$ correspond to the same vertex, this yields a solution of size two.
    If $i \neq j$, since there exists an arc between $s$ and $t$ in $D'$, and $D'[V(F)]$ is outerplanar, the arc $(w,w')$ can be added without introducing crossings.
    This can be done along with $(t,w),(w',s)$, which ensures a path from $T$ to $S$, giving a solution of size three.
    Otherwise, assume $i=j=1$ without loss of generality, recall $w \neq w'$, and $w$ must appear after $w'$ following $P_1$ from $S$ to $T$.
    Then, since  $D'[V(F)]$ is outerplanar, there is a non-arc in $D'$ either between $t$ and $w'$, or between $s$ and $w$.
    In the first case, $(t,w')$ along with $(w',s)$ forms a solution of size two, in the second, $(t,w)$ and $(w,s)$ do.
\end{proof}

\Cref{lem_simple_k_max} along with \Cref{lem_bound-supported-completions-face} give an absolute bound for the number of minimum supported completions in simple faces.
\begin{corollary}\label{cor:guess-simple-faces}
    There exists an absolute constant $c_\cS$ such that for any instance $(D,k)$, and any simple face $F$ of $D$, the number of minimal supported completions of $F$ is at most $c_\cS$. Furthermore, those can be enumerated in polynomial time.
\end{corollary}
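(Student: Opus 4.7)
The plan is to combine the two preceding results directly. First, any minimal supported completion of a simple face $F$ corresponds to the restriction $X_F$ of some minimum supported solution $X$ to $D$: if $X_F$ were not minimum within $F$, we could swap it for a strictly smaller supported completion of $F$ (keeping $X \setminus X_F$ intact), contradicting the minimality. So it suffices to bound the size and number of restrictions $X_F$ where $X$ is a minimum solution.

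Next, by \Cref{lem_simple_k_max}, for any minimum solution $X$ and any simple face $F,$ we have $|X_F| \le 3.$ Hence every minimal supported completion of $F$ uses at most three arcs. I would then invoke \Cref{lem_bound-supported-completions-face} with parameters $\lt(F) = 2$ (since $F$ is simple) and $\ell = 3,$ which yields at most $2^{O(2+3)} = 2^{O(1)}$ supported completions of $F$ of size at most three. Taking $c_\cS$ to be this absolute constant gives the desired bound on the number of minimal supported completions.

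For the algorithmic part, the same application of \Cref{lem_bound-supported-completions-face} also provides an enumeration procedure running in time $2^{O(2+3)} n^{O(1)} = n^{O(1)},$ which is polynomial as required.

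I do not anticipate a real obstacle: once the reduction \textit{minimal supported completion} $\Rightarrow$ \textit{restriction of a minimum solution} is made explicit, the bound follows mechanically from \Cref{lem_simple_k_max} and \Cref{lem_bound-supported-completions-face}. The only subtle point is making sure the \emph{swap} argument is sound, i.e.\ that replacing $X_F$ by a smaller supported completion of $F$ in a global solution $X$ still yields a supported solution; this holds because supportedness and the solution property decompose face by face (supportedness is per-face, and swapping within $F$ does not affect strong connectivity contributions outside $F$ beyond the connections $X_F$ already provided, which the replacement still provides by the proof of \Cref{lem_simple_k_max}).
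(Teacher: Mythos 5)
Your proposal is correct and matches the paper's own (one-line) argument, which simply combines \Cref{lem_simple_k_max} (restrictions of minimum solutions to a simple face have at most three arcs) with \Cref{lem_bound-supported-completions-face} applied with $\lt=2$ and $\ell=3$. The preliminary ``swap'' step is not actually needed for the direction that matters — the restrictions of minimum supported solutions are already contained in the $2^{O(1)}$-size, polynomially enumerable set of supported completions with at most three arcs — and it is just as well, since replacing $X_F$ by a strictly smaller supported completion of $F$ need not preserve strong connectivity.
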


\subsection{Computing a minimum solution within simple faces}\label{ssec_simple}

At this point, we can branch over all supported completions of size at most $k$ across alternating faces $\AF$.
To obtain an \FPT algorithm, it would now suffice to compute the remaining minimum solution within the (initial) simple faces $\SF$ for each branch, that is, to solve the following problem. 

\defproblem{\tsc{Simple} \spsca}{An instance $(D,k)$ for \spsca, and a (sub)set of simple faces $\cF$ of $D$}{Is there a solution $X$ for $(D,k)$ such that every arc of $X$ is embedded in $\cF$?}

As we have already argued, the existence of multiple (minimal) supported completions inside simple faces means our algorithm should be randomized by picking \say{allowed} completions in each face.
Then, the simplest step would be a straightforward reduction to~\tsc{Weighted SCA}, but the algorithm of~\cite{klinkbyStrongConnectivityAugmentation2021} runs in time $2^{O(k \log k)} n^{O(1)}$.
To obtain a $2^{O(k)} n^{O(1)}$ algorithm, we will instead reduce to the \tsc{Minimum Dijoin} problem.
For a digraph $D$, and any $Y \subseteq A(D)$, we let $\ola{Y} = \{ (v,u) : (u,v) \in Y \}$.
Then, $Y$ is a \emph{dijoin} if $D + \ola{Y}$ is strongly connected.
We will rely on the decision variant of the \tsc{Minimum Dijoin} problem, defined as follows.

\defproblem{\tsc{Dijoin}}{A digraph $D$, and an integer $k$}{Is there some $Y \subseteq A(D)$, with $|Y| \leq k$, such that $D+\ola{Y}$ is strongly connected?}

We now recall the result of Frank~\cite{frankHowMakeDigraph1981}.
\begin{theorem}[Frank '81]\label{thm_dijoin_frank}
    \tsc{Dijoin} is polynomial-time solvable.
\end{theorem}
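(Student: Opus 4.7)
The plan is to reduce \tsc{Dijoin} to a minimum covering problem for directed cuts, and then invoke a constructive version of the Lucchesi-Younger min-max theorem. First, I would observe the equivalence: for any $Y \subseteq A(D)$, the digraph $D+\ola{Y}$ is strongly connected iff $Y$ intersects every \emph{dicut} of $D$, where a dicut is an arc-set of the form $[S,V\setminus S]$ (arcs from $S$ to $V\setminus S$) such that no arc of $D$ enters $S$. Indeed, a non-covered dicut would remain a dicut in $D+\ola{Y}$, while conversely covering every dicut ensures that no $S \subsetneq V$ has $\delta^-_{D+\ola{Y}}(S)=\emptyset$, which is equivalent to strong connectivity.

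Next, I would set up the structural backbone. Let $\mathcal{S}$ be the family of proper non-empty vertex sets $S$ with $\delta^-_D(S)=\emptyset$; these are exactly the sets whose out-boundary is a dicut. A short uncrossing argument shows that $\mathcal{S}$ is closed under intersection and union whenever $S\cap T\neq \emptyset$ and $S\cup T\neq V$, so $\mathcal{S}$ carries a submodular/crossing structure. This is the hook that turns the Lucchesi-Younger min-max equality—minimum size of a dicut cover equals maximum number of pairwise arc-disjoint dicuts—into an algorithmic statement: both sides can be accessed via submodular function minimization on $\mathcal{S}$.

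Then I would execute Frank's primal-dual scheme. Maintain a partial dijoin $Y$ together with a laminar (or cross-free) family $\mathcal{L}\subseteq \mathcal{S}$ of dicuts such that each $y\in Y$ covers exactly one $L\in \mathcal{L}$, certifying $|Y|=|\mathcal{L}|$ via the packing bound. At each iteration, use a polynomial SFM subroutine to find a dicut not covered by $Y$, choose an arc from it according to a canonical rule (e.g., lying in the "innermost" uncovered $S\in \mathcal{S}$), and update $\mathcal{L}$ by a sequence of uncrossing operations that preserves laminarity while keeping $|\mathcal{L}|=|Y|$. The algorithm halts when $Y$ covers every dicut, at which point the certificate proves optimality.

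The main obstacle will be the bookkeeping that keeps $\mathcal{L}$ laminar after each augmentation: one must argue that each arc addition can be matched with an update of $\mathcal{L}$ increasing its size by exactly one, which is where the submodularity of $\mathcal{S}$ and the cross-free property of $\mathcal{L}$ interact nontrivially. Polynomial running time then follows from a potential argument bounding the number of iterations by $|A(D)|$, combined with polynomial-time SFM and the polynomial cost of each uncrossing step.
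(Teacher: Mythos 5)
The paper does not prove this statement at all: it is imported as a black box from Frank's 1981 paper (his constructive proof of the Lucchesi--Younger theorem), so there is no internal proof to compare yours against. Your outline is a faithful description of exactly that route. The opening reduction is correct and essentially complete: $Y$ is a dijoin iff $Y$ meets every dicut $\delta^+(S)$ with $\delta^-(S)=\emptyset$, since an uncovered such $S$ still has empty in-boundary in $D+\ola{Y}$, and conversely a weakly connected digraph with no dicut is strong (you should note the weak-connectivity caveat explicitly, though for the instances arising in this paper, and for \tsc{Dijoin} in general, the disconnected case is trivially negative). The uncrossing argument that the family $\{S : \delta^-(S)=\emptyset\}$ is closed under union and intersection of crossing members is also standard and correct.

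What you have after that, however, is a plan rather than a proof. The entire difficulty of Frank's theorem lives in the step you defer: showing that each augmentation of $Y$ can be paired with an update of the cross-free certificate family $\mathcal{L}$ that increases $|\mathcal{L}|$ by exactly one, so that $|Y|=|\mathcal{L}|$ is maintained and Lucchesi--Younger optimality is certified at termination. Naively choosing any arc of any uncovered dicut does not work — a greedy cover can be far from minimum — so the ``canonical rule'' for selecting the arc and the accompanying laminarity-preserving exchange argument are precisely the content of the theorem, not bookkeeping. Also, detecting an uncovered dicut does not require submodular function minimization: it is a strong-connectivity check on $D+\ola{Y}$, i.e.\ a single depth-first search; the SFM machinery you invoke is heavier than needed for separation, while being insufficient on its own for the primal--dual invariant. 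As a standalone proof the proposal is therefore incomplete at its crux; as a summary of the cited algorithm it is accurate in spirit.
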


The following shows how to solve \tsc{Simple PSCA} in $2^{O(k)} n^{O(1)}$ time, by a randomized reduction to \tsc{Dijoin}.
\begin{lemma}\label{lem_simple_psca}
    \tsc{Simple PSCA} admits a $2^{O(k)} n^{O(1)}$ algorithm.
\end{lemma}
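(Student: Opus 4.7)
The plan is to design a randomized $2^{O(k)} n^{O(1)}$ algorithm succeeding with inverse-exponential probability on yes-instances, then derandomize via a perfect hash family. Fix an instance $(D, k, \cF)$ of \tsc{Simple PSCA}.

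\textbf{Randomized guessing.} For each simple face $F \in \cF$, I would enumerate its at most $c_\cS$ minimal supported completions in polynomial time (by \Cref{cor:guess-simple-faces}) and pick one, $C_F$, uniformly at random; set $C = \bigcup_{F \in \cF} C_F$. If $(D,k,\cF)$ is a yes-instance, \Cref{cor:existence-supported-solution} together with minimality yields a supported solution $X^*$ of size at most $k$ whose restriction $X^*_F$ to each simple face is a minimal supported completion of $F$. Since $|X^*|\leq k$, at most $k$ faces carry a non-empty $X^*_F$, so the event that our random choice matches $X^*_F$ on every such face occurs with probability at least $c_\cS^{-k} = 2^{-O(k)}$.

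\textbf{Reduction to Dijoin.} Conditioned on a correct guess, $X^* \subseteq C$, so it remains to compute a minimum $X \subseteq C$ with $D+X$ strongly connected. I would contract each strong component of $D$ into a single vertex, obtaining a DAG $D_c$ (observing that $D+X$ is strongly connected iff $D_c+X_c$ is, where $X_c$ is the image of $X$), discarding loops and repeated arcs. Then build $D^* = D_c \cup \ola{C_c}$ by adding to $D_c$ the reverse of each (image of a) candidate arc. For any $X \subseteq C$, the set $\ola{X_c} \subseteq \ola{C_c}$ is a dijoin of $D^*$ iff $D+X$ is strongly connected, with $|\ola{X_c}|=|X|$. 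To recover the minimum such $|X|$, solve the weighted variant of minimum dijoin on $D^*$ with weight $1$ on $\ola{C_c}$-arcs and weight $k+1$ on $D_c$-arcs; any optimum of weight at most $k$ is forced inside $\ola{C_c}$ and returns the desired $X$. The weighted version remains polynomial via Frank's submodular-flow framework, extending \Cref{thm_dijoin_frank}.

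\textbf{Derandomization.} I would replace the random choices by deterministic enumeration through an $(|\cF|, k, c_\cS)$-perfect hash family (Naor--Schulman--Srinivasan style) of size $2^{O(k)} \log |\cF|$, computable within the target time budget. Each candidate assignment triggers one polynomial-time min-weight dijoin call, yielding a deterministic $2^{O(k)} n^{O(1)}$ algorithm overall.

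\textbf{Main obstacle.} The delicate step is Step 2 --- arguing that weight-$\leq k$ dijoins of $D^*$ correspond bijectively to size-$\leq k$ subsets $X \subseteq C$ rendering $D+X$ strongly connected. The key ingredients are: (i) the plane/oriented design of each $C_F$ ensures $D+X$ remains plane oriented for every $X \subseteq C$; (ii) the high-weight penalty on $D_c$-arcs forces any optimum of weight at most $k$ to lie inside $\ola{C_c}$, so that only ``reverses'' of genuine candidate arcs contribute; and (iii) contraction preserves strong-connectivity questions for subsets of $C$. Verifying this cleanly, along with minor technicalities (loops and parallel images of $C$ arising from contraction, which are simply discarded), is the main technical work.
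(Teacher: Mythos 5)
Your overall architecture matches the paper's: guess one of the at most $c_\cS$ minimal supported completions per simple face uniformly at random, reduce the resulting restricted search to \tsc{Minimum Dijoin}, and derandomize with an $(n,k,c_\cS)$-universal family. The guessing and derandomization steps are essentially identical to the paper's.

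The gap is in your reduction to \tsc{Dijoin}. You form $D^* = D_c \cup \ola{C_c}$ and claim that $\ola{X_c}\subseteq\ola{C_c}$ is a dijoin of $D^*$ iff $D+X$ is strongly connected. Only the forward implication holds. In $D^*+\ola{Y}$ the \emph{unselected} arcs of $\ola{C_c}$ are still present, and they can destroy dicuts of $D_c$ that a genuine solution would have to cover. Concretely, a minimal supported completion of a simple face may contain a ``middle'' arc $(w,w')$ with $w,w'$ on two different interval dipaths of the face (this is the $i\neq j$ case in the proof of \Cref{lem_simple_k_max}); its reverse $(w',w)$ is not parallel to any existing connectivity (unlike the reverses of arcs leaving the local sink or entering the local source, which are covered by \Cref{lem_paths-between-localT}), so it can cross a dicut of $D_c$ backwards. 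A weight-$\le k$ dijoin of $D^*$ may then skip that dicut entirely, and your algorithm would accept a set $X$ for which $D+X$ is not strong. Your weight argument (ii) only forces the dijoin \emph{inside} $\ola{C_c}$; it does not remove the unselected reverse arcs from the graph whose strong connectivity is being certified. The paper circumvents exactly this by attaching, for each candidate $(u,v)$, a gadget with a fresh vertex $x_{(u,v)}$, a single cheap arc $(x_{(u,v)},u)$, and length-$(k+1)$ paths from the local source $s$ to $x_{(u,v)}$ and from $x_{(u,v)}$ to $v$, all on a $(k+1)$-subdivision of $D$: the unreversed gadget only duplicates connectivity already guaranteed by \Cref{lem_paths-between-localT}, and reversing the one cheap arc simulates adding $(u,v)$. (As a minor point, this subdivision device also lets the paper stay within the unweighted \tsc{Dijoin} of \Cref{thm_dijoin_frank}, whereas you invoke a weighted variant not established in the paper; that part is repairable, but the equivalence above is the real flaw.)
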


\begin{proof}
    Let $(D,k)$ and $\cF$ form an instance of \tsc{Simple PSCA}. 
    We describe a randomized Monte-Carlo algorithm reducing the problem to \tsc{Dijoin}, then show its derandomization.
    Let us first observe that applying~\Cref{lem_supported-local-terminals} and \Cref{lem_supported-interval-dipaths} to a solution of $(D,k)$ within $\cF$ also yields that $(D,k)$ admits a supported solution within $\cF$, and we may therefore only look for these.
    Let $c_{\cS}$ be the maximum number of supported completions given by~\Cref{cor:guess-simple-faces}.
    By the same corollary, for every face $F \in \cF$, we can generate in polynomial time the set $\cX(F)$ of all possible restrictions of a minimum supported solution of $D$ to $F$.
    Then, for each such face $F$, we pick a completion $Y^F \in \cX(F)$ uniformly at random with probability $1/c_{\mathcal{S}}$. 
    
    We now create an instance of \textsc{Dijoin} on an auxiliary digraph $D'$, that will be equivalent to $(D,k)$ admitting a solution using only these \say{allowed} arcs.
    The construction of $D'$ starts from the plane $k+1$-subdivision of $D$, respecting the embedding of $D$.
    Then, consider each $F \in \cF$, any $s \in S$ and $t \in T$, and take the resulting face $F'$ in $D'$.
    Now, for every $e = (u,v) \in Y^F$, we add a vertex $x_{(u,v)}$, along with: the arc $(x_{(u,v)},u)$, a directed path $P_{s,x_e}$ of length $k+1$ from $s$ to $x_{(u,v)}$, and a directed path $P_{x_{(u,v)},v}$ of length $k+1$ from $x_{(u,v)}$ to $v$. This achieves the construction of $D'$, which we stress may not be planar.

    \begin{claim}\label{claim_simple_psca_dijoin}
        $(D',k)$ is positive for \tsc{Dijoin} if and only if $(D,k)$ admits a solution for \tsc{Simple PSCA} contained in $\bigcup_{F \in \cF} Y^F$.
    \end{claim}
    \begin{subproof}{the claim}
        Assume first that we are given some supported solution $X$ to $(D,k)$ with all arcs in $\bigcup_{F \in \cF} Y^F$.
        Consider $D'$, and define $X' = \{ (x_{(u,v)},u) : (u,v) \in X \} \subseteq A(D')$.
        We argue that $D'+\ola{X'}$ is strongly connected by exhibiting a directed path \textsl{in $D'$} from $x$ to $y$ for any $x,y \in V(D)$.
        This will be sufficient as all vertices of $V(D') \setminus V(D)$ admit path from $V(D)$ and one to $V(D)$, by construction.
        Consider then any such $x,y \in V(D)$, let $P$ be the path from $x$ to $y$ in $D+X$.
        We define a path $P'$ from $x$ to $y$ in $D'+\ola{X'}$ by replacing the following arcs of $P$:
        \begin{itemize}
            \item every arc of $P \cap A(D)$ is replaced with its $k+1$-subdivision in $D'$,
            \item every arc $(u,v) \in P \cap X$ is replaced with the path formed by $(u,x_{(u,v)}) \in \ola{X'}$ and $P_{x_{(u,v)},v}$. 
        \end{itemize}
        This achieves to show that $D+\ola{X'}$ is strongly connected and concludes this direction.

        Conversely, assume $D'$ admits a dijoin $X'$ of size at most $k$, and define $X = \{ (u,v) : (x_{(u,v)},u) \in X' \}$.
        Consider any $x,y \in V(D)$, and a directed path $P'$ from $x$ to $y$ in $D' + \ola{X'}$, we build a directed path $P$ from $x$ to $y$ by splitting $P'$ into minimal subpaths from $V(D)$ to $V(D)$, and replacing each with a directed path in $D+X$.
        Consider any such minimal (directed) subpath $P''$ of $P'$ between two vertices of $V(D)$, let $\overline{P''}$ be its underlying undirected path, which we note must already be an underlying path of $D'$.
        
        If $P''$ corresponds to the $k+1$ subdivision of $(a,b)$, we may simply replace it with $(a,b)$.
        Note that $P''$ cannot correspond to the $k+1$-subdivision of $(b,a)$, else every arc of this path would be part of the dijoin, contradicting $|X'| \leq k$. 
        Otherwise, we may assume that there is some face $F \in \cF$, with corresponding $s \in S$ and $t \in T$ chosen by construction, and $(u,v) \in Y^F$, such that $\overline{P''}$ is either between: $s$ and $v$, $s$ and $u$, or $u$ and $v$.
        Let us first deal with the first two cases simultaneously.
        If $P''$ is directed from $s$ to $u$ or $v$, note that \Cref{lem_paths-between-localT} already gives a directed path from $s$ to $u$ or $v$ in $D$, which we use to replace $P''$.
        Moreover, $P''$ could not be directed from $u$ or $v$ to $s$, else all arcs of $P''$ would be part of the dijoin, contradicting its size.
        We are left with the case where $\overline{P''}$ is between $u$ and $v$.
        Then, note again that $P''$ cannot be from $v$ to $u$, else all $k+1$ arcs of $P_{(x_{(u,v)},v)}$ would be part of the dijoin.
        Therefore, $P''$ is from $u$ to $v$, meaning in particular $(x_{(u,v)},u) \in X'$, therefore $(u,v) \in X$, and we may replace $P''$ with $(u,v)$.
        This achieves to show that any subpath $P''$ of $D' + \ola{X'}$ between vertices of $V(D)$ may be replaced with a directed path between the same vertices in $D+X$, and shows $D+X$ is strongly connected.
        The fact that $D+X$ is plane stems from our choice of each $Y^F$ as a plane supported completion, and the fact that it is oriented comes from~\Cref{lem_no_digon_multicompletion}.
    \end{subproof}
    
    We then run the algorithm of Frank~\cite{frankHowMakeDigraph1981} for \tsc{Dijoin} on $(D',k)$, which by the above decides in polynomial time whether $(D,k)$ admits a solution within $\bigcup_{F \in SF} Y^F$.
    We claim that if there is any solution of $(D,k)$ within $\cF$, the algorithm will output \yes with probability at least $1 / {c_{\cS}^{k}}$.
    Indeed, assume this is the case and let $X$ be any minimum supported solution for $(D,k)$ within $\cF$.
    Let $\cF(X) \subseteq \cF$ be the faces in which $X$ embeds an arc, noting $|\cF(X)| \leq k$.
    With probability $1 / {c_{\mathcal{S}}}^{k}$, we have chosen exactly $Y^F = X_F$ for each $F \in \cF(X)$.
    Then, since $X$ is indeed a solution of $(D,k)$ in $\bigcup_{F \in SF} Y^F$, the algorithm for \tsc{Dijoin} will necessarily output \yes.
    This yields a polynomial-time (true-biased) Monte-Carlo algorithm for \tsc{Simple PSCA} which always outputs \no when $(D,k)$ admits no solution, and outputs \yes with probability at least $1 / {c_{\mathcal{S}}}^{k}$ when it does.

    The process above can be derandomized using universal sets.
    We follow the terminology of Ullrich and Vybíral~\cite{ullrichDeterministicConstructionsHighDimensional2022}.
    Here, our universal set will be a family $H$ of functions, each mapping a simple face $F\in \cF$ to an element of $\cX(F)$.
    Formally, we take $H$ to be a $(n,k,c_\cS)$-universal set.
    That is, for any choice of (at most) $k$ faces $F_1,\ldots,F_{k} \in \cF$ among (at most) $n$ faces in $D$, and any choices $Y_i \in \cX(F_i)$ for $i \in [k]$, there is a function $h\in H$ such that $h(F_i)=Y_i$ for every $i$.
    In particular, for one such $h$, we will have $h(F) = X_F$ for every $F \in \cF$, since $|\SF| \leq k$.
    There are explicit constructions of such families $H$ of size $2^{O(k \log {c_\cS})} \log{n} = 2^{O(k)} n^{O(1)}$, computable in linear time in their size, see Theorem 2.5 in~\cite{ullrichDeterministicConstructionsHighDimensional2022} (which relies on~\cite{Naor-splitters}).
    Iterating over the whole universal set yields a factor of $ 2^{O(k)} n^{O(1)}$ for the derandomization, and in turn $(D,k)$ can be decided in time $2^{O(k)} n^{O(1)}$.
    Note that the algorithm of Frank~\cite{frankHowMakeDigraph1981} is constructive, so we may also recover solutions for \tsc{Simple PSCA} using our reduction.
\end{proof}

\subsection{Fixed-parameter tractability}\label{ssec_fpt_psca}

We now combine the branching on alternating faces with the algorithm for \tsc{Simple PSCA} given by~\Cref{lem_simple_psca} to show that \spsca is \FPT.
\fptPsca*

\begin{proof}
    Given an instance $(D,k)$, we begin by partitioning $D$ into strong components in polynomial time, see Tarjan~\cite{tarjanDepthFirstSearchLinear1972}.
    Then, we compute local terminals, as well as the set of alternating faces $\AF$ and simple faces $\SF$, which can also be done in polynomial time.
    If the sum of the number of local terminals across alternating faces is greater than $8(k-1)$, we already output \textsc{No}. Indeed, $(D,k)$ must be a negative instance  by~\Cref{obs_bound_terminals_k} and \Cref{lem_bound-lt-alt-oriented}.
    We may thus assume $\sum_{F \in \AF} lt(F) \leq 8 (k-1)$ from now on.
    
    Leveraging~\Cref{cor:branch-alternating}, we branch over all supported completions $Y^{\AF}$ of size at most $k$ within alternating faces.
    This yields at most $2^{O(k)}$ branches, and iterating over all these completions only adds a $2^{O(k)}n^{O(1)}$ factor to our runtime.
    In each branch, we compute $D' = D + Y^{\AF}$ and $k'=k - |Y^{AF}|$.
    Then, we decide whether $(D',k')$ admits a supported solution within the (initial) simple faces $\SF$ by running our algorithm for \tsc{Simple PSCA} from~\Cref{lem_simple_psca}, with $\cF = \SF$.
    If the branch corresponding to $Y^{\AF}$ outputs \yes for $(D',k')$, letting $Y'$ be a solution, it is clear that $Y^{\AF} + Y'$ is a (supported) solution to $D$.
    Conversely, assume $(D,k)$ admits a minimum solution $X$, and take it to be supported.
    In one of our branches, we will have chosen exactly $Y^{\AF} = X_\AF = \cup_{F\in \AF} X_F$.
    Then, note that $X_{\SF} = \cup_{F\in \SF} X_F$ is a supported solution of $(D',k')$, meaning our algorithm for \tsc{Simple PSCA} $(D',k')$ will have found a (supported) solution of size at most $|X_{\SF}|$, and thus outputs \yes. 
    Combining both the branching and this algorithm yields an algorithm for \spsca running in $2^{O(k)}n^{O(1)}$ time.
     
\end{proof}

\section{NP-completeness}\label{sec_hardness}

In this section, we show~\Cref{thm_np_hardness}, the NP-hardness of the augmentability variant of \spsca, referred to as \spscap, which carries on to \spsca.

\defproblem{\tsc{Plane Strong Connectivity Augmentability} (\textsc{PSCA'})}{A $3$-connected plane oriented graph $D$}{Is there a subset $X \in V(D)^2$ such that $D+X$ is strongly connected, oriented, and plane?}

\noindent
This problem is clearly in \NP as, given a graph $G$ and a set of arcs $X,$ one can verify in polynomial time that $G+X$ is strongly connected and plane.
We show the hardness results by a reduction from \tsc{planar-$3$-SAT}, that is a restriction of \tsc{SAT}, where each clause contains exactly three literals, and the incidence graph of the formula is planar.
This problem has been shown to be \NP-complete by Lichtenstein~\cite{lichtensteinPlanarFormulaeTheir1982}. In Lichtenstein's reduction from \tsc{$3$-SAT}, the size of the instance increases only quadratically. This implies that under ETH, \tsc{planar-$3$-SAT} does not admit $2^{o(\sqrt{n})}$-time algorithm. To obtain the same lower bound under ETH, we have to show that our reduction from \tsc{planar-$3$-SAT} to \textsc{PSCA'} is linear, in the sense that a \tsc{planar-$3$-SAT} with $n$ variable leads to an instance with $O(n)$ vertices. This is proved in~\Cref{thm_psca_prime_hardness}, but let us start with some terminology and a description of the reduction.

Consider an instance $\Phi = \bigwedge_{j=1}^m \phi_j$ of \tsc{planar-$3$-SAT}, on variables $x_1,...,x_n,$ which we consider with any fixed plane embedding. We assume without loss of generality that the incidence graph of $\Phi$ is connected.
We construct a plane oriented graph $D$ such that $\Phi$ is satisfiable if and only if $D$ is positive for \spscap.
Our gadgets are plane and oriented, and eventually, none will be embedded in an internal face of another.
Therefore, while the outerface of any gadget may \say{interact} with the rest of the instance, logical choices in the reduction correspond to completions within inner faces.
We formalize these \say{internal} completions in the following, introducing concepts allowing us to constrain possible completions given by positive reduced instances of \spscap.

\paragraph{Internal completions.}
In the following, we consider a plane subdigraph $H$ of some instance $D$ to \spscap, which will later correspond to a gadget, and introduce some terminology about completions within $H.$
An \emph{internal} completion of $H$ is a completion such that all arcs are embedded in the inner faces of $H.$
A terminal $x$ of $H$ is \emph{internal} if it is not incident to the outerface. Then, any valid solution for $D$ must add an arc incident to $x,$ towards $x$ if $x$ is a source and from $x$ if it is a sink.
Note that such an arc will necessarily be embedded in an internal face of $H.$
We say a completion $X$ of $H$ is \emph{valid} if it is internal, and all terminal components of $H+X$ lie on the outerface.

Given two completions $Y,X$ of $H,$ we say $Y$ (strictly) \emph{dominates} $X$ if the partition of $H+X$ into strong components is a (strict) refinement of the one of $H+Y.$ Two completions dominating each-other are called \emph{equivalent}, and admit the same partition.
Then, a completion is \emph{maximal} when it is not dominated by another, but note that there may exist multiple maximal completions.
We will use the following observation throughout our reduction, allowing us to assume the completion within each gadget is a maximal one. 
\begin{observation}\label{obs_max-completion}
    Let $D$ be a positive instance for \spscap admitting a solution $Y,$ and $H$ be a (plane) subidgraph of $D.$
    Then, the restriction $Y_H$ of $Y$ to $H$ is valid, and replacing $Y_H$ in $Y$ with a dominating valid completion still yields a solution.
\end{observation}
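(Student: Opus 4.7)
The plan is to prove the observation in two independent parts: first that $Y_H$ is itself a valid completion of $H$, and then that replacing $Y_H$ by any valid $Y'_H$ dominating it still yields a solution of $D$.

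For the validity of $Y_H$, the internality is immediate from the definition of the restriction to a plane subdigraph: arcs of $Y_H$ are precisely those of $Y$ embedded inside $H$, hence lie in its inner faces. For the outerface condition on terminal components, I would proceed by contradiction. Suppose some terminal component $C$ of $H+Y_H$ has no vertex on the outerface of $H$. Then every vertex of $C$ is internal to $H$, so by planarity of $D+Y$, every arc incident to $C$ must be embedded in an inner face of $H$ and therefore belong to $A(H)\cup Y_H$. Since $C$ is already a terminal component of $H+Y_H$, all these arcs are uniformly oriented with respect to $C$, which makes $C$ a terminal component of $D+Y$ as well, contradicting the strong connectivity of $D+Y$.

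For the replacement, set $Y^* = (Y\setminus Y_H)\cup Y'_H$. Planarity of $D+Y^*$ is routine since $Y'_H$ is internal (embedded in inner faces of $H$) while $Y\setminus Y_H$ is embedded outside $H$; the two sets of arcs thus live in disjoint regions. Orientedness follows from $H+Y'_H$ being oriented (as $Y'_H$ is a completion of $H$) combined with the orientedness of $D+Y$, which rules out $Y'_H$ containing the reverse of any arc of $D$ or $Y\setminus Y_H$ with both endpoints on the outerface of $H$.

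The main obstacle will be strong connectivity of $D+Y^*$, which I would establish by contradiction. Assuming a dicut $(A,B)$ of $D+Y^*$ with no arc from $B$ to $A$, strong connectivity of $D+Y$ forces at least one crossing arc, which cannot lie in $D$ or $Y\setminus Y_H$ (those survive in $D+Y^*$) and hence must lie in $Y_H$. Writing $A_H = A\cap V(H)$ and $B_H = B\cap V(H)$, this immediately gives that $(A_H, B_H)$ is a dicut of $H+Y'_H$, since any $B_H\to A_H$ arc there would yield a $B\to A$ arc in $D+Y^*$. A topological argument on the condensation of $H+Y'_H$ then produces a source component of $H+Y'_H$ inside $A_H$ and a sink component inside $B_H$; validity of $Y'_H$ supplies outerface vertices $s\in A_H$ and $t\in B_H$ lying in these components. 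The delicate step is to then use dominance, which says that the partition of $H+Y_H$ refines that of $H+Y'_H$, to produce a dipath from $t$ to $s$ in $D+Y^*$, thereby furnishing a crossing arc and the desired contradiction. My approach will be to take a dipath from $t$ to $s$ in $D+Y$ and replace every use of a $Y_H$ arc $(u',v')$: when $u', v'$ lie in the same strong component of $H+Y'_H$ (by dominance the natural case), we reroute within that component; otherwise we reroute via the outerface of $H$ using arcs of $(Y\setminus Y_H)\cup(D\setminus A(H))$, leveraging that every strong component of $H+Y'_H$ can reach, or be reached from, a terminal component that meets the outerface.
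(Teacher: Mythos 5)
The paper states this as an Observation without proof, so your attempt must stand on its own. Your first part and the setup of the second are sound: the argument that an internal terminal component of $H+Y_H$ would persist as one side of a dicut of $D+Y$ is exactly right (modulo the implicit assumption, true for the gadgets, that no arc of $D$ other than those of $A(H)$ is drawn inside an inner face of $H$), and your extraction of a source component of $H+Y'_H$ inside $A_H$ and a sink component inside $B_H$, with outerface representatives $s\in A$ and $t\in B$, is correct.

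The gap is exactly at the step you flag as delicate, and it cannot be closed from the stated hypotheses. Domination only asserts that the strong-component partition of $H+Y_H$ refines that of $H+Y'_H$; it says nothing about reachability between distinct components, so for an arc $(u',v')\in Y_H$ whose endpoints lie in different components of $H+Y'_H$ you get no $u'\rightsquigarrow v'$ dipath to reroute through. Your fallback of detouring via the outerface is circular: the outside portion of the detour either presupposes connectivity of $D+Y^*$ (the thing being proved) or is taken in $D+Y$ and may re-enter $H$ through further $Y_H$-arcs. In fact the statement fails at this level of generality: let $H$ have outerface vertices $a,c$ (plus two outerface sinks $d_1,d_2$ receiving arcs from both) and one internal isolated vertex $b$; then $Y_H=\{(a,b),(b,c)\}$ and $Y'_H=\{(c,b),(b,a)\}$ are both valid, have identical all-singleton component partitions, hence are equivalent and mutually dominating, yet they route $a\rightsquigarrow c$ and $c\rightsquigarrow a$ respectively. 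If $D$ returns to $a$ only via the $d_i$ outside $H$, then $D+Y_H$ is strong while $D+Y'_H$ has $c$ as a source. What makes the Observation usable in the reduction is a stronger property of the specific completions of \Cref{lem_literal-completion,lem_completionVariable}: the dominating (maximal) completions preserve the whole reachability relation of the dominated ones. Under the hypothesis that every pair reachable in $H+Y_H$ remains reachable in $H+Y'_H$, your proof closes immediately — and more simply, by arc-by-arc path replacement with no dicut argument — so you should either add that hypothesis or verify it for the completions actually used. A smaller issue of the same kind affects orientedness: orientedness of $D+Y$ constrains $Y_H$, not $Y'_H$, so one must instead check that the gadget completions only join vertices with no external arc of $D$ between them.
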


\paragraph{Sketch of the reduction.}
We begin in \Cref{ssec_gadgets} by defining the literal, variable, and clause gadgets used in our reduction.
Literal gadgets admit two maximal valid completions, corresponding to the positive and negative assignments.
The gadget for a variable $x$ consists in a combination of as many literal gadgets as the number of occurrences $n_x$ of $x$ in $\Phi.$ We show that any maximal valid completion of a variable gadget is either the positive or negative completion of all of its literals.
As a result, in a positive instance, the completion of the variable gadget consists in a single sink component containing all vertices but $n_x$ \say{top} sources, lying on the outerface.
Clause gadgets are inner triangulations, admitting no internal completion.
Our construction, described in \Cref{ssec_construction}, shows how to use these clause gadgets to join sources of the variables (literals) involved into a single source component.
We show the soundness of our reduction in \Cref{ssec_soundness}, achieving to prove \Cref{thm_np_hardness}.

\subsection{Gadgets}\label{ssec_gadgets}

Consider a variable $x$ appearing $n_x$ times in $\Phi.$
We will build the variable gadget for $x$ by combining $n_x$ literal gadgets, each identified and connected to the gadget of their corresponding clause.

\subsubsection{Literals}

To construct a literal gadget $L$, as depicted in \Cref{fig:literal-def}, we start with an alternating cycle of length $8,$ embedded planarily.
We choose an arbitrary sink to be the bottom sink and label it $b,$ then label remaining left, top and right sinks in clockwise order as $l,t,r.$
We label the sources in-between two sinks as $bl$ (bottom left), $tl$ (top left), $tr$ (top right) and $br$ (bottom right), according to the sinks they are adjacent to.
Create twins $b',l',t',r'$ for each sink and  embed them on the outer face of the octagon, then, add an arc from the twin to the original sink.
Then, add the arc $(t,b)$ within the octagon, leaving only $b,l,r$ as sinks. This results in two $5$-faces, which are the only ones that may be completed.
\begin{figure}
    \centering
    \includegraphics{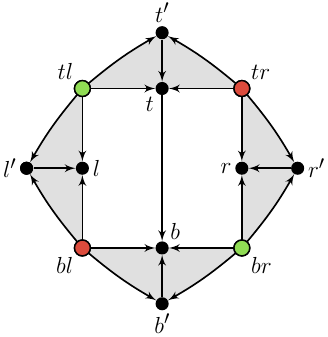}
    \caption{A literal gadget $L$, with internal sinks $b,l,r,$ bottom sources $bl,br$ and  top sources $tl,tr.$}
    \label{fig:literal-def}
\end{figure}

We say that a completion $X$ of a gadget \textit{hits} a source $s$ if it contains an arc towards $s,$ then $X$ hits a sink $t$ if it contains an arc from $t.$ 
Sinks of a literal gadget are internal to it and  they will remain internal in the variable gadget, forcing them all to be hit by a valid completion.
In the variable gadget, bottom sources of each literal will also be internal, which will force at least one to be hit.
The following lemma shows that these two constraints are enough to restrict our completions to exactly two maximal ones.
\begin{lemma}\label{lem_literal-completion}
    Any valid completion of the literal gadget $L$ hitting $br$ is either: 
    \begin{itemize}
        \item the \emph{positive completion} $Y=\{(r,t),(t,br),(l,b),(b,tl)\},$ yielding sources $bl,tr$ and only one other (sink) strong component in $L+Y$, or
        \item the completion $Y=\{(r,t),(t,br),(b,l),(l,t)\},$ that is dominated by the positive completion.
    \end{itemize}
    
    Any valid completion hitting $bl$ is either:
    \begin{itemize}
        \item the \emph{negative completion} $Y=\{(l,t),(t,bl),(r,b),(b,tr)\},$ yielding sources $br,tl$ and only one other (sink) strong component in $L+Y$, or
        \item the completion $Y= \{(l,t),(t,bl),(b,r),(r,t)\},$ that is dominated by the negative completion.
    \end{itemize}

    In all cases, all non-sources vertices form a sink component.
\end{lemma}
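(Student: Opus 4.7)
The plan is an exhaustive case analysis within the two inner $5$-faces of $L$, leveraging the fact that any pentagon admits at most two non-crossing chords. Since the sinks $b,l,r$ are internal in the enclosing variable gadget (as indicated by the figure caption), any valid completion must hit each of them; combined with the hypothesis of hitting $br$, we need an out-arc from each of $b,l,r$ and an in-arc to $br$, all embedded within the two $5$-faces.

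I would first catalogue the candidate arcs in each face, discarding those that would create a digon with $(t,b)$ or with the octagon's existing arcs. Viewing the right $5$-face as a pentagon with cyclic vertex order $b,br,r,tr,t$, the arcs into $br$ are $(t,br)$ and $(tr,br)$, while the arcs out of $r$ are $(r,t)$ and $(r,b)$. In the left $5$-face with cyclic order $b,bl,l,tl,t$, the arcs out of $l$ are $(l,t)$ and $(l,b)$, and those out of $b$ restricted to this face are $(b,tl)$ and $(b,l)$.

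The crux is a planarity argument in the right face. The chord $(tr,br)$ separates $\{r\}$ from $\{b,t\}$ in the pentagon and therefore crosses every candidate out-arc from $r$, so $(tr,br)\notin X$ and hence $(t,br)\in X$. The chord $(t,br)$ in turn separates $\{b\}$ from $\{r,tr\}$ and crosses $(r,b)$, forcing $(r,t)\in X$. The pair $\{(t,br),(r,t)\}$ already saturates the pentagon's two-chord budget, so no arc in the right face can hit $b$; consequently $b$ must be hit in the left face. A symmetric planarity check there eliminates $\{(l,t),(b,tl)\}$ (crossing chords) and $\{(l,b),(b,l)\}$ (digon), leaving only the pairs $\{(l,b),(b,tl)\}$ and $\{(l,t),(b,l)\}$. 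Combining the forced arcs yields precisely the two completions claimed by the lemma.

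To conclude, I would verify validity and domination by a direct strong component computation. In the positive completion, the cycles $b\to tl\to l\to b$, $b\to tl\to t\to br\to b$, and $r\to t\to br\to r$ merge $\{b,tl,t,br,l,r\}$ into a single sink component, leaving $bl,tr$ (and the twins) as outerface sources. In the alternative completion, the absence of $(b,tl)$ isolates $tl$ as a source and shrinks the sink component to $\{b,l,t,r,br\}$ — a partition that strictly refines the positive's, and hence is dominated by it. The case of hitting $bl$ follows from the gadget's left--right symmetry, producing the negative completion and its dominated variant. The main obstacle will be the planarity bookkeeping, particularly ensuring that no alternative hit of $b$ in the right face has been overlooked; once the pentagon-chord analysis is laid out, the strong-component and domination arguments are immediate.
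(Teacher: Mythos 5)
Your proof is correct and follows essentially the same route as the paper's: force the four distinct hitting arcs (out-arcs from $b,l,r$ plus the in-arc to $br$), use the two-chords-per-pentagon bound to pin down $(t,br)$ and $(r,t)$ in the right face, case-split on the left face, and finish by computing strong components to verify the domination claim. The only cosmetic difference is the order of deductions (you saturate the right face first and then conclude $b$ must be hit on the left, whereas the paper first argues $|Y|=4$ with exactly two arcs per face); the content is identical.
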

\begin{proof}
    Let $Y$ be any valid completion hitting at least one bottom source $bl,br.$
    Since $l,b,r$ are internal sinks, they must be hit by $Y$ and  observe that no arc hitting those can be added towards $bl$ or $br.$ This means that the arcs hitting $l,b,r$, and one of $bl,br$ are distinct. Since at most two arcs can be added in each 5-face, $|Y|=4$ and exactly two arcs of $Y$ are embedded in each of the left and right faces.

    Assume first that $Y$ hits $br.$
    The arc hitting $br$ is exactly $(t,br) \in Y,$ as otherwise the internal sink $r$ could not be hit. Thus, to hit $r,$ the (unique) other arc of $Y$ in the right $5$-face is $(r,t).$
    If the arc of $Y$ hitting $b$ is $(b,tl),$ this yields the positive completion $\{(r,t),(t,br),(l,b),(b,tl)\}$ (see the second completion in \Cref{fig:literal}). Otherwise, it must be $(b,l),$ which yields completion $\{(r,t),(t,br),(b,l),(l,t)\}.$
    In both cases, since $bl,tr$ are not hit, they are sources of $L+Y$ and  in the second case, $tl$ must also be. 

    The case is symmetrical if $Y$ hits $bl,$ which forces arcs of $Y$ added in the left case to be $(t,bl)$ and $(l,t).$
    Then, according to the arc hitting $b$ in the left case, we either obtain the negative completion $Y=\{(l,t),(t,bl),(r,b),(b,tr)\},$ yielding sources $br,tl$ (third completion in \Cref{fig:literal}). Or, we obtain $Y=\{(l,t),(t,bl),(b,r),(r,t)\},$ yielding sources $br,tl,tr$ (first completion in  \Cref{fig:literal}).
\end{proof}

\begin{figure}[t]
    \centering
    \includegraphics[scale=1.3,page=1]{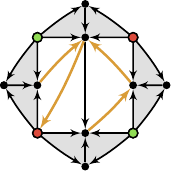}
    \hspace{1.2cm}
    \includegraphics[scale=1.3,page=2]{literal.pdf}
    \hspace{1.2cm}
    \includegraphics[scale=1.3,page=3]{literal.pdf}
    \caption{A literal gadget $L$ along with three completions.
	The first completion is valid and hits a bottom source, but is dominated by the third.  
	The second and third completions correspond to the positive and negative (valid) completions respectively. These hit exactly one bottom source and  form a sink component containing all vertices but the two antipodal sources which are not hit.}
	\label{fig:literal}
\end{figure}

While a positive reduced instance only implies that the completion within literals is valid, the following variable gadget ensures that their completion must also be maximal, which forces either the positive or negative completion. 

\subsubsection{Variables}

Let us formally describe the construction of the gadget $G_x$ for a variable $x$ appearing $n_x$ times in $\phi.$ Note that we may assume $n_x \geq 2$ as otherwise the \tsc{$3$-SAT} instance can be reduced by removing $x$ and the associated clause.
Take $n_x$ copies of a literal gadget, $L_1,...,L_{n_x},$ with corresponding vertices now labelled with subscripts $i \in [1,n_x].$
Considering indices $i$ modulo $n_x,$ we build the variable gadget for $x$ as follows:
\begin{itemize}
    \item For $i \in [1,n_x],$ identify sources $br_i \in V(L_i)$ and $bl_{i+1} \in V(L_{i+1}),$
    \item For $i \in [1,n_x],$ identify $r'_i$ with $l'_{i+1},$
    \item Finally, identify all $(b'_i)_i$ into a common vertex $b'.$ 
\end{itemize}
We embed the resulting gadget canonically, such that the (antidirected) cycle formed by $(b_i)_i$ and $(br_i)_i$ is plane and  the embedding of any $L_i$ is preserved.
The final construction is depicted in \Cref{fig:variable-gadget} with a completion which will correspond to a positive assignment.

\begin{figure}[t]
    \centering
    \includegraphics[scale=1]{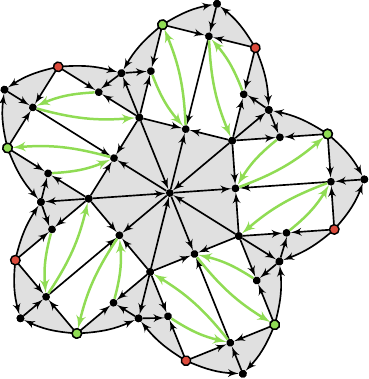}
    \caption{The gadget $G_x$ for variable $x,$ appearing in $5$ clauses, with a maximal completion corresponding to the positive assignment shown in green. This yields a valid completion with top-right vertices of each literal being sources, while all other vertices form a sink component.}\label{fig:variable-gadget}
\end{figure}

\begin{lemma}\label{lem_completionVariable}
    A variable gadget $G_x$ admits exactly two maximal valid completions:
    \begin{itemize}
        \item The \emph{positive completion}, with source vertices $(tr_i)_i$ and sink component $Y_x = V(G_x) \setminus \{tr_i : i\}.$
        \item The \emph{negative completion}, with source vertices $(tl_i)_i$ and sink component $Y_x = V(G_x) \setminus \{tl_i : i\}.$ 
    \end{itemize}
\end{lemma}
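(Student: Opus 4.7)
The plan is to reduce the analysis to each literal separately via \Cref{lem_literal-completion}, then enforce cyclic consistency across all literals using the shared sources $br_i = bl_{i+1}$ and the central vertex $b'$.

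\textbf{Reduction to literal types.} First I would observe that for each literal $L_i$, the internal sinks $l_i$ and $r_i$ each lie on exactly one inner face of $G_x$---respectively the left and right $5$-face of $L_i$. Any valid completion $X$ therefore hits both $l_i$ and $r_i$ from within those $5$-faces, and invoking \Cref{lem_literal-completion} on the restriction of $X$ to $L_i$'s $5$-faces shows that this restriction must coincide with one of the four literal completions listed there. By the maximality of $X$ in $G_x$, any dominated restriction can be replaced by its dominator (producing a completion that strictly dominates $X$), so we may assume the restriction is either the positive or the negative completion of $L_i$. Denote the result the \emph{type} of $L_i$.

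\textbf{Cyclic constraints.} Next I would assemble the constraints imposed by the remaining internal terminals. Each shared source $br_i = bl_{i+1}$ and the central vertex $b'$ are internal, and so must be hit. The only arcs that can hit $br_i$ are (a)~$(t_i, br_i)$ present when $L_i$ is positive, (b)~$(t_{i+1}, bl_{i+1})$ present when $L_{i+1}$ is negative, or (c)~the auxiliary diagonal $(b', br_i)$ added inside the central $4$-face at index $i$, bounded by $b', b_i, br_i, b_{i+1}$. The only arcs that can hit $b'$ are of the form $(br_j, b')$ placed inside some central $4$-face at index $j$. Within each central $4$-face, planarity allows at most one added diagonal, and orientability forbids the two opposite orientations of the same diagonal from coexisting.

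\textbf{Cyclic domination argument.} I would then argue that the literal types must all agree. In the uniform case, every shared source is hit by (a) or (b), and a single auxiliary arc $(br_j, b')$ suffices to hit $b'$; this yields the claimed completion. For a mixed pattern, every cyclic transition with $L_i$ negative and $L_{i+1}$ positive forces $(b', br_i)$ in the central $4$-face at $i$, thereby blocking $(br_i, b')$ there. A case analysis around the cycle should then show that in any mixed configuration, flipping a single literal's type so as to match its neighbors produces a valid completion that merges previously isolated singleton sources into the sink component, strictly dominating $X$ and contradicting maximality.

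\textbf{Verification.} Finally I would check directly that the all-positive and all-negative assignments, each combined with one auxiliary arc $(br_j, b')$ in some central $4$-face, yield valid maximal completions with the announced source sets and sink component. The two resulting partitions are incomparable under domination, and hence comprise exactly the two maximal valid completions.

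\textbf{Main obstacle.} The hardest step is expected to be the cyclic domination argument. Mixed type assignments satisfy all local hitting constraints thanks to the flexibility of central $4$-face diagonals, so ruling them out requires carefully tracking how the forced central-face diagonals interact with the global strong-component structure and exhibiting, in each mixed case, the precise uniform completion that strictly dominates $X$.
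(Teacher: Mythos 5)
Your first step (restricting each literal's completion to one of the four completions of \Cref{lem_literal-completion} and then, by maximality of the global completion, to the positive or negative one) matches the paper. The divergence — and the gap — is in how you force all literal types to agree around the cycle. You correctly sense that this is the crux, but you defer it to an unspecified ``case analysis around the cycle'' and, more importantly, the mechanism you propose cannot work. You claim that in a mixed configuration, flipping one literal's type to match its neighbours yields a completion that \emph{strictly dominates} the mixed one. It does not: flipping $L_i$ from negative to positive merges the singleton source $\{tl_i\}$ into the sink component but simultaneously detaches $tr_i$ (which, in the negative completion of $L_i$, receives the arc $(b_i,tr_i)$ and lies in the sink component) into a new singleton source. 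The two partitions are therefore incomparable under the paper's refinement order, and neither dominates the other. Domination arguments cannot rule out mixed assignments; if a mixed valid completion existed at all, the lemma would simply be false. So the proof must show mixed completions are not valid, not that they are dominated.

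The paper does exactly that, with a counting/propagation argument on the internal sources $(br_i)_i$ that you replace by the (unavailable) domination step. From the proof of \Cref{lem_literal-completion}, a valid completion of a literal gadget that hits a bottom source consists of exactly four arcs, exactly one of which enters a bottom source. Since the $n_x$ identified sources $br_i=bl_{i+1}$ are internal to $G_x$ and each must be hit from within the literal $5$-faces, and each of the $n_x$ literal completions supplies exactly one such arc, every $br_i$ is hit exactly once. Now if $Y_i$ is positive it hits $br_i$ and not $bl_i=br_{i-1}$, so $br_{i-1}$ must be hit by $Y_{i-1}$, forcing $Y_{i-1}$ positive; induction around the cycle gives uniformity, and the component structure then follows from \Cref{lem_literal-completion}. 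Your worry that $br_i$ could instead be hit by a diagonal such as $(b',br_i)$ in a central face is precisely what would break this propagation; the paper dismisses it ``by construction'' (only the two $5$-faces of the incident literals admit an arc into $br_i$), whereas you build your whole argument on that escape route being open and then cannot close it. As written, your proof establishes only that each literal is positively or negatively completed, not that the choices are consistent, so the main claim remains unproven.
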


\begin{proof}
    Consider a variable gadget $G_x,$ comprised of literal gadgets $(L_i)_i$ for $i \in [1,n_x].$
    Let $Y$ be a maximal valid completion of $G_x$ and  $Y_i = Y \cap V(L_i)^2$ be its restriction to the literal gadget $L_i.$

    The internal sinks of $G_x$ are exactly $(l_i,b_i,r_i)_i,$ for $i \in [n_x],$ meaning each $Y_i$ is a valid completion of $L_i.$
    The internal sources of $G_x$ are exactly $(br_i)_i,$ which must all be hit by arcs of $Y$ within the literal gadgets, by construction.
    Then, we know from \Cref{lem_literal-completion} that at most one such arc may belong to a single $Y_i.$ So each $Y_i$ contains exactly one arc incident to $br_i$ or $bl_i=br_{i-1}.$
    We claim $Y_i$ must also be maximal for $L_i.$ Assume it is not, then it must be one of the two completions dominated by the positive and negative ones, thanks to \Cref{lem_literal-completion}. In both cases, $tl_i$ and $tr_i$ are sources in $L_i+Y_i,$ thus also in $G_x+Y.$ Then, substituting $Y_i$ with the completion dominating it would yield a completion of $G_x$ dominating $Y,$ which contradicts its maximality.
    Therefore, each $Y_i$ is either the positive or the negative completion of $L_i.$ 

    Now, assume for some $i$ that $Y_i$ is the positive completion of $L_i,$ that is, the (unique) valid solution hitting $br_i.$
    Since $bl_i$ and $br_{i-1}$ are identified, $bl_i$ must now be hit by $Y_{i-1},$ meaning $Y_{i-1}$ is the positive completion of $L_{i-1}$ as well.
    Applying the above inductively yields that each $Y_j$ is exactly the positive completion of $L_j$ for all $j \in [n_x].$
    Conversely, if for some $i,$ $Y_i$ is the negative completion of $L_i,$ then this is the case for all $i.$
    
    In turn, $Y$ must be one these two completions, which we call \emph{positive} and \emph{negative} respectively.
    In both cases, $Y$ contains a directed cycle containing all internal sources. If $Y$ is positive, it can be obtained \say{clockwise} by concatenating directed paths $(bl_i,b_i,tl_i,t_i,bl_{i+1}).$ If $Y$ is negative, it can be found \say{anti-clockwise} by concatenating directed paths $(br_i,b_i,tr_i,t_i,bl_{i-1}).$
    Applying \Cref{lem_literal-completion}, $L_i+Y_i$ consists in a sink component containing all but source vertices $\{bl_i,tr_i\}$ in the positive case or  $\{br_i,tl_i\}$ in the negative case.
    Combining the two observation, $G_x+Y$ contains sources $\{tr_i : i\}$ and a (unique) sink component $Y_x = V(G_x) \setminus \{tr_i : i\}$ in the positive case. And in the negative case, it contains sources $\{tl_i : i\}$ and a (unique) sink component $Y_x = V(G_x) \setminus \{tl_i : i\}.$
    This shows that both completions are indeed valid and  the unique such ones that are maximal.
\end{proof}

\subsubsection{Clauses}


We now describe the clause gadgets, which we will use in \Cref{ssec_construction} to join top sources of corresponding variable gadgets.
Consider a clause $\phi = l(x) \vee l(y) \vee l(z)$ of $\Phi,$ we build the gadget $C_{\phi}$ with vertices assigned to $x,y,z,$ irrespective of the corresponding literal.
The clause gadget consists of $15$ vertices, it is an inner-triangulated plane oriented graph, depicted in \Cref{fig:clause-gadget} and defined as below.
We start with vertices $v_x,v_y,v_z,$ forming a facial directed triangle $\ora{C}_{\phi}.$
Then, for each pair of $v_x,v_y,v_z,$ add a common out-neighbour such that the resulting triangle is also facial, call those $m_{xy},m_{yz},m_{zx}.$
Now, we add add out-neighbours $u_x,w_x$ to $v_x$; $u_y,w_y$ to $v_y,$ and $u_z,w_z$ to $v_z.$
Then, create directed $4$-cycles $\ora{C}_{xy} = (u_x,m_{xy},w_y,m_{yx}),$ $\ora{C}_{yz} = (u_y,m_{yz},w_z,m_{zy}),$ and $\ora{C}_{zx} = (u_z,m_{zx},w_x,m_{xz}),$ and embed them such that the outer boundary is $(u_x,v_x,w_x,m_{xy},u_y,v_y,w_y,m_{yz},u_z,v_z,w_z,m_{zx}).$ Finally, add arcs $(m_{xz},m_{zx}),(m_{yx},m_{xy}$ and $(m_{zy},m_{yz}).$
Note that the resulting gadget consists of the source component $\ora{C}_{\phi},$ consisting of $\{v_x,v_y,v_z\},$ and its sink components are $\ora{C}_{xy},\ora{C}_{yz},\ora{C}_{zx}.$

\begin{figure}[H]
    \centering
    \includegraphics[scale=1]{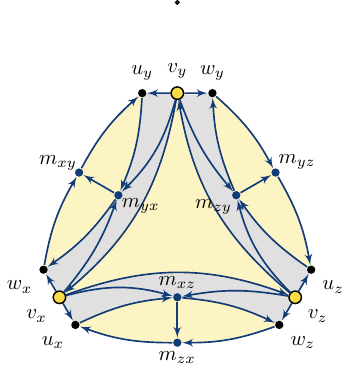}
	\caption{A clause gadget $C_{\phi}$ for some $\phi = l(x) \vee l(y) \vee l(z)$. The central source component is highlighted in yellow, and consists of vertices $\{v_x,v_y,v_z\}$. These are later identified to sources of the corresponding literal gadgets, meaning one literal will have to be satisfied. The remaining three sink components, also shown in yellow, will be identified with literal gadgets as to not be terminals anymore.}
	\label{fig:clause-gadget}
\end{figure}

\subsection{Construction}\label{ssec_construction}

We are now ready to describe our construction of an instance $D$ to \spscap, equivalent to the \tsc{planar-$3$-SAT} formula $\Phi = \bigwedge_{j \in [m]} \phi_j$ on variables $(x_i)_{i \in n}.$
We begin by embedding the gadgets $G_{x_i}$ for variables $(x_i)_i,$ respecting the embedding of $\Phi.$
For each variable $x,$ recall $G_x$ is formed by $n_x$ literal gadgets, which we (re)label $L_{x,\phi}$ for any $\phi = \phi_j$ containing $x.$
That is, the (clockwise) order of literal gadgets inside $G_x$ follows the order of incidences of (literals of) $x$ in the corresponding clauses of $\Phi.$
Likewise, when writing $\phi =  a \vee b \vee c,$ we assume $a,b,c$ are ordered clockwise according to their incidence on the corresponding variables in $\Phi.$

Now, we show how to join variable gadgets with the corresponding clause gadgets, as depicted in \Cref{fig:reduction}.
Consider any clause $\phi = l(x) \vee l(y) \vee l(z),$ with $x,y,z$ appearing clockwise in $\Phi,$ and let $C_{\phi}$ be the clause gadget of $\phi.$ Let $L_{x,\phi},$ $L_{y,\phi}$ and $L_{z,\phi}$ be the corresponding literal gadgets of $G_x,G_y,G_z.$
For any variable $q \in \{x,y,z\},$ we identify a top source and two outer vertices from $L_{q,\phi}$ with vertices $u_q,v_q,w_q$ of $C_{\phi}$ according to $l(q)$ as follows.
\begin{itemize}
    \item If $l(q) = q,$ we identify top left source $tl_{w,\phi}$ with $v_q,$ then $t'_{q,\phi}$ with $u_q,$ and $l'_{q,\phi}$ with $w_q.$  
    \item If $l(q) = \overline{q},$ we identify top right source $tr_{w,\phi}$ with $v_q,$ then $r'_{q,\phi}$ with $u_q,$ and $t'_{q,\phi}$ with $w_q.$ 
\end{itemize} 
Then, we preserve the (relative) embeddings of $m_{xy},m_{yz},m_{zx}$ and $m_{yx},m_{zy},m_{xz}$ from $C_{\phi}$ given by the construction of the clause gadget.
Finally, we add arcs entering the remaining top sources in each literal gadget.
We do so in such a way that no matter the maximal valid completions of the variable gadgets, the sink components of each are eventually part of the same strongly connected component. Here, we assume that the incidence graph of $\Phi$ is connected, as otherwise we would deal with each connected component independently.
To connect these top sources, we consider variable gadgets clockwise around             $C_{\phi},$ and for any consecutive          variables $p,q$ in $\{x,y,z\},$ we add the following.
\begin{itemize}
    \item If $l(p)=\overline{p}$ and $l(q) = \overline{q},$ we add the arc from $r'_{q,\phi} \in V(L_{q,\phi})$ to top source $tl_{p,\phi} \in V(L_{p,\phi}).$ See the adjacency between gadgets of $y=p$ and $z=q$ in \Cref{fig:reduction}.
    \item If $l(p)=p$ and $l(q) = q,$ we add the arc from $l'_{p,\phi} \in V(L_{p,\phi})$ to top source $tr_{q,\phi} \in V(L_{q,\phi}),$ symmetrically to the above.
    \item If $l(p)=\overline{p}$ and $l(q) = q,$ we add the arc from $t'_{p,\phi} \in V(L_{p,\phi})$ to top source $tr_{q,\phi} \in V(L_{q,\phi}),$ as well as the arc from $tr_{q,\phi}$ to $tl_{p,\phi}.$ See the adjacency between gadgets of $z=p$ and $x=q$ in \Cref{fig:reduction}.
    \item If $l(p)=p$ and $l(q)=\overline{q},$ we add no arcs. See the adjacency between gadgets of $x=p$ and $y=q$ in \Cref{fig:reduction}.
\end{itemize}
We apply the construction above to any variable/clause incidence.
Since $\Phi$ is planar, these operations can be done while preserving a plane digraph.
Moreover, the instance remains oriented, as the arcs added (directly) between variable gadgets as above will not be reconsidered for a different clause.

\begin{figure}
    \centering
	\includegraphics[scale=0.9]{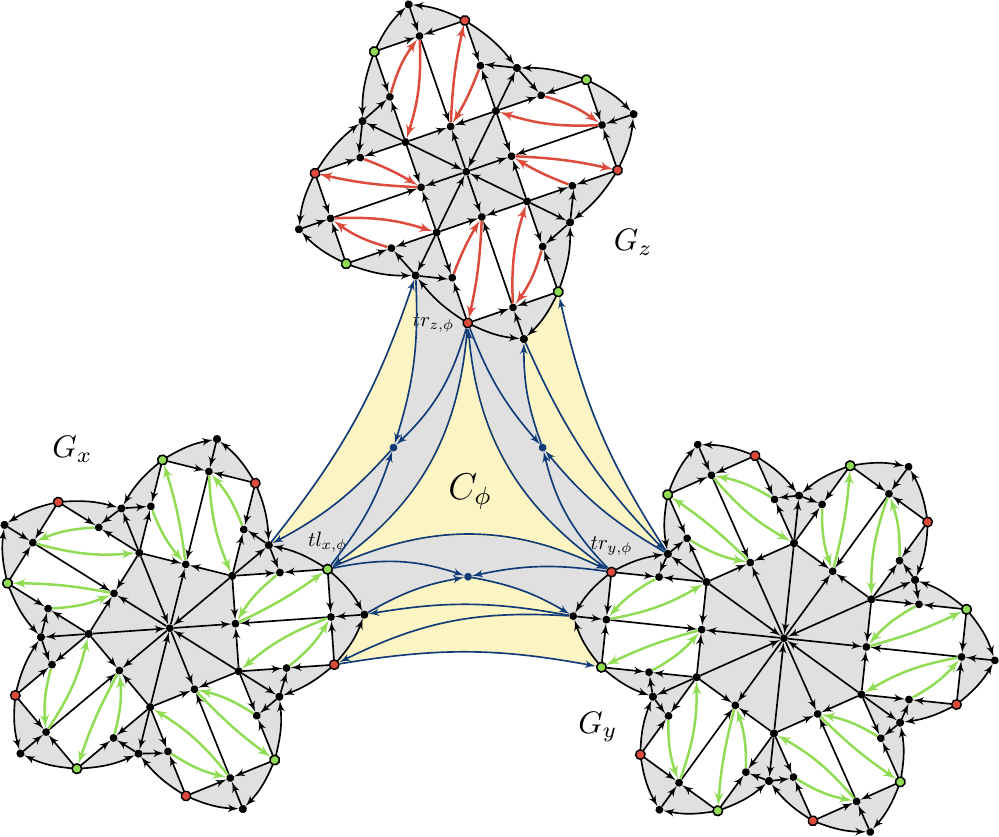}
	\caption{A clause gadget corresponding to $\phi = x \vee \overline{y} \vee \overline{z},$ with variable gadgets for $x,$ $y$ and $z$ (occurring $5,$ $4$ and  $6$ times in $\Phi$).
	Top source of literal gadgets are now identified to the (central) source component of the clause. Here, $x$ and $y$ are completed positively and negatively, satisfying $\phi,$ meaning the (initial) source of the clause is now part of the same strongly connected component as sink components of $x$ and $y.$}
	\label{fig:reduction}
\end{figure}

\subsection{Soundness}\label{ssec_soundness}

We are now ready to show the hardness of \spscap through the following, which we recall also implies the $2^{O(\sqrt{k})}n^{O(1)}$ lower bound under ETH, and thus~\Cref{thm_np_hardness}.

\begin{theorem}\label{thm_psca_prime_hardness}
    \spscap admits a linear reduction from \tsc{planar-$3$-SAT}.
\end{theorem}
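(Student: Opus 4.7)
The plan is to establish three facts implicit in the statement: the construction produces an instance $D$ of size linear in $n$, every satisfying assignment of $\Phi$ induces a valid augmentation of $D$, and conversely every valid augmentation induces a satisfying assignment of $\Phi$. Linearity is immediate: each literal gadget has $12$ vertices, each variable gadget $G_x$ has $O(n_x)$ vertices after identifications, and each clause gadget contributes $O(1)$ vertices. Since the incidence graph of $\Phi$ is planar and bipartite with $n+m$ vertices and $3m$ edges, Euler's formula yields $3m \leq 2(n+m)-4$, hence $m \leq 2n-4$; combined with $\sum_x n_x = 3m$, the vertex count of $D$ is $O(n+m)=O(n)$.

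For completeness, given a satisfying assignment $\alpha$, I complete each variable gadget $G_x$ with the positive completion if $\alpha(x)$ is true, and with the negative one otherwise, as provided by Lemma~\ref{lem_completionVariable}. Each $G_x$ then collapses into a single sink component $Y_x$ accompanied by $n_x$ isolated top sources ($tr_i$ in the positive case, $tl_i$ in the negative one). I claim the resulting $D+X$ is strongly connected. Inside each clause $C_\phi$, at least one satisfied literal $l(q)$ forces the corresponding vertex $v_q$, identified with a top source lying in $Y_q$, to be in $Y_q$; the central source triangle $\ora{C}_\phi$ therefore merges into $Y_q$. The three peripheral sinks $\ora{C}_{pq}$ of $C_\phi$ escape via the identifications of $u_q$ and $w_q$ with outer twins $t', l', r'$ of the adjacent literals, each of which carries an arc to an interior sink, hence into some $Y_q$. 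Finally, the remaining isolated top sources are absorbed through the inter-literal arcs: a routine case analysis on the signs $(l(p), l(q))$ of consecutive literals around each clause shows that, combined with the satisfaction of the clause, every such source either receives an in-arc from a component reaching some $Y_q$ or is itself in $Y_q$.

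For soundness, let $X$ be any solution of $D$. For each variable gadget $G_x$, the restriction of $X$ to $G_x$ is a valid internal completion by Observation~\ref{obs_max-completion}, which can be replaced by a dominating maximal one while preserving the solution. By Lemma~\ref{lem_completionVariable}, this maximal completion is either the positive or the negative completion of $G_x$, defining an assignment $\alpha$. Suppose for contradiction that some clause $\phi = l(x) \vee l(y) \vee l(z)$ is left unsatisfied by $\alpha$. Then by the sign conventions of the reduction and Lemma~\ref{lem_completionVariable}, each $v_q$ of $\ora{C}_\phi$ remains a top source of $G_q + X_q$ and has no in-arc within any $G_q$. Because $C_\phi$ is inner-triangulated, $X$ cannot embed any arc inside $C_\phi$, and by construction the inter-literal arcs only target ``other'' top sources (never a $v_q$). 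Consequently no arc of $D + X$ enters $\ora{C}_\phi$ from outside, making it a source component of $D+X$ and contradicting strong connectivity. The main obstacle is the completeness step: carefully checking that the four sign patterns for consecutive literals around a clause, together with the assignment, always route the residual top sources into some $Y_q$.
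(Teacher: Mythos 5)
Your proposal follows the paper's proof essentially verbatim: the same three-part structure (linearity of the construction; completeness by applying the positive/negative completions of \Cref{lem_completionVariable} according to a satisfying assignment and tracing strong connectivity through the clause gadgets; soundness via \Cref{obs_max-completion}, \Cref{lem_completionVariable}, and the fact that the source triangle $\ora{C}_\phi$ must receive an incoming arc, which forces the satisfying completion of some incident variable gadget). The only differences are cosmetic — you make the $O(n)$ bound explicit via Euler's formula where the paper simply asserts it, and you phrase soundness contrapositively — and the sign-pattern case analysis you defer as ``routine'' is glossed at the same level of detail in the paper itself.
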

\begin{proof}
    Consider a \tsc{planar-$3$-SAT} instance $\Phi$ on variables $(x_i)_i$ and clauses $(\phi_j)_j,$ which we may take to be connected.
    Let $D$ be the plane oriented graph built from $\Phi$ in~\Cref{ssec_construction} be the instance of \spscap. It is clear that $D$ has linear size in $\Phi$ by construction.

    \paragraph{The initial instance is positive implies the reduced instance is.}

    Assume first that $\Phi$ is a positive instance for \tsc{$3$-SAT} and  let us show that $D$ admits a strongly connected augmentation preserving planarity (again, without digons).
    This augmentation is given by completing variable gadgets of $(x_i)_i$ with positive and negative completions according to any assignment satisfying $\Phi.$ Let $X$ be this completion, as defined in \Cref{lem_completionVariable}, and consider the resulting (embedded) graph $D+X.$
    By construction, $D+X$ is plane and oriented, and we now show it is strongly connected by progressively building a set $Y \subseteq V,$ ending with $Y=V,$ and arguing for the strong connectivity of $Y$ at each step.

    In $D+X,$ for any variable $x,$ the corresponding gadget $G_{x}$ has a single sink component $Y_{x}$, according to \Cref{lem_completionVariable}. This component contains all vertices except: $(tr_{x,\phi})_{\phi}$ if $x$ has been completed positively, or $(tl_{x,\phi})_{\phi}$ if it has been completed negatively; for $\phi$ ranging through clauses involving $x.$ 
    
    For any three variables $x,y,z$ appearing in a clause $\phi,$ $Y_x \cup Y_y \cup Y_z$ are part of the same strongly connected component in $D+X.$ This is because of the $4$-cycles $C_{\phi}$ added between vertices ($t',$ $l'$ or $r'$) of $Y_x$ and $Y_y,$ $Y_y$ and $Y_z,$ as well as $Y_z$ and $Y_x.$
    Because the incidence graph of $\Phi$ is connected, letting $Y = \bigcup_i Y_{x_i},$ to which we add vertices of all directed $4$-cycles within clauses, yields that $Y$ is strongly connected.

    Observe that any sink (respectively source) component of $D+X$ must contain a sink (respectively source) component of $D.$ First, all sinks of variable gadgets are part of $Y,$ and recall this is also true for the sink components (directed $4$-cycles) of clause gadgets $C_\phi.$ 
    Therefore, if $D+X$ is not strongly connected, it admits an unique sink component which contains $Y,$ and any vertex outside $Y$ admits a directed path towards $Y.$
    
    Then, the observation above applied to source (components) yields that any source in $D+X$ must contain a top source belonging to a literal gadget, because all vertices of clause sources $\ora{C}_{\phi}$ have been identified with those.
    Now, for any clause $\phi$ on $x,y,z,$ our construction adds arcs from $Y_x \cup Y_y \cup Y_z$ to the top sources of $L_{x,\phi},L_{y,\phi},L_{z,\phi}$ that have not been identified to the source component $\ora{C}_{\phi}$ of $\phi.$ In turn, these (initial) top sources are part of the same component as $Y,$ and we add them to $Y.$
    Therefore, the only vertices of $L_{x,\phi},L_{y,\phi},L_{z,\phi}$  which may be contained in a source of $D+X$ are the three (initial) top sources corresponding to the literal appearing in $\phi,$ which are now joined into a directed triangle $\ora{C}_{\phi}.$
    Assume without loss of generality that in our assignment for $\Phi,$ clause $\phi$ is satisfied by variable $x.$ If $l(x)=x$ in $\phi,$ $\ora{C}_{\phi}$ contains $tl_{x,\phi},$ which is part of $Y$ since $x$ has been completed positively. Conversely if $l(x) = \overline{x},$ $\ora{C}_{\phi}$ contains $tr_{x,\phi},$ which is part of $Y.$ In any case, $C_{\phi}$ is part of the same strongly connected component as $Y.$ Applying this reasoning to all clauses of $\Phi$ yields that $D+X$ is strongly connected, achieving to show this implication.

    \paragraph{The reduced instance is positive implies the initial instance is.}

    Conversely, assume $D$ is a positive instance of \spscap, and take $X$ to be a solution.
    According to \Cref{obs_max-completion}, we may assume the restriction of $X$ to each variable gadget $G_x$ is a maximal valid completion of the gadget.
    Therefore, we may apply~\Cref{lem_completionVariable} to yield that the restriction of $X$ to each variable gadget $G_x$ is either its positive or negative completion. We assign boolean values to $(x_i)_i$ according to the completions of their gadgets, and show this satisfies $\Phi.$

    Towards this, consider any clause $\phi = l(x) \vee l(y) \vee l(z),$ and let us show it is satisfied.
    Let $C_{\phi}$ be its corresponding gadget and consider its (initial) source component $\ora{C}_{\phi}.$ Since $D+X$ is strongly connected, $X$ must contain an arc $e$ towards a vertex of $\ora{C}_{\phi} = (v_x,v_y,v_z).$
    Assume without loss of generality that $e$ is towards $v_x.$
    If $l(x) = x$ in $\phi,$ our construction identified $v_x$ with top source $tl_{x}$ of $L_{x,\phi}.$
    Recalling~\Cref{lem_completionVariable}, this means that the restriction of $X$ to $G_x$ is its positive completion. In turn, $x$ has been set to true in our assignment for $\Phi,$ and $\phi$ is satisfied.
    If instead $l(x) = \overline{x},$ the restriction of $X$ to $G_x$ is its negative completion, $x$ has been set to false, and $\phi$ is also satisfied.
    This achieves to show that our assignment satisfies $\Phi,$ and concludes the reduction.   
\end{proof}

\bibliographystyle{plainurl}
\bibliography{biblio}

\newpage
\appendix

\section{Proofs of \Cref{sec_dirPSCA}}

\subsection{Reducing to DAGs}\label{apdx:reduc-DAG}

Throughout this subsection, we relax all notions of~\Cref{sec_dirPSCA} to multidigraphs in place of digraphs, in particular the instances of \dirPSCA.
We moreover consider a \emph{condensation} of $D,$ $D^c,$ as a plane acyclic multidigraph obtained by contracting iteratively every non-loop arc belonging to a strong component of $D$ (preserving multi-arcs and loops). 
Note that the embedding of $D^c$ can vary according to the contraction sequence, but not the underlying multidigraph. 

\begin{restatable}{lemma}{reducAlmostDAGs}\label{lem_reduc-almost-DAGs}
    An instance $(D,k)$ of \dirPSCA admits a solution if and only if $(D^c,k)$ does, for any condensation $D^c.$
    Moreover, any solution for $D^c$ can be transformed into a solution for $(D,k)$ in polynomial time. 
\end{restatable}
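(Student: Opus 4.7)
The plan is to proceed by induction on the number of contractions in the sequence producing $D^c$ from $D$. Since each contraction involves a non-loop arc $(a,b)$ lying within a strong component of the current digraph (so in particular $a$ and $b$ remain identified with a single strong component in every intermediate digraph), it suffices to prove the single-step version: if $D'$ is obtained from $D$ by contracting one such arc, then $(D,k)$ admits a solution iff $(D',k)$ does, with a polynomial-time transformation.

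For the forward direction, starting from a solution $X$ of $(D,k)$, I would obtain $X'$ by replacing every endpoint sitting at $a$ or at $b$ by the merged vertex $ab$. Arcs of $X$ joining $a$ to $b$ collapse to loops (which can be discarded, since $a,b$ lie in the same strong component of $D$ and hence their merge contributes nothing once internal to a component), and parallel arcs in $X$ may coalesce; in all cases $|X'| \le |X| \le k$, and the embedding of $D'+X'$ is inherited canonically from the contraction. Any directed path in $D+X$ projects to a directed walk, hence a directed path, in $D'+X'$, giving strong connectivity.

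For the backward (lifting) direction, let $Y'$ be a solution of $(D',k)$. Each arc $e = (u,v) \in Y'$ is embedded between two specific angles of some face $F'$ of $D'$. The cyclic ordering around $ab$ in $D'$ is, by construction, the concatenation of the cyclic orderings around $a$ and $b$ in $D$, glued at the contracted arc. Consequently, whenever $u = ab$ or $v = ab$, the angle of $e$ at $ab$ corresponds unambiguously to an angle at either $a$ or $b$ in $D$. Lifting each such endpoint accordingly yields a completion $X$ of $D$ with $|X| = |Y'| \le k$, embedded in $D$ without crossings. Strong connectivity of $D + X$ then follows from that of $D' + Y'$: any directed path in $D'+Y'$ passing through $ab$ uses an in-arc and an out-arc there, which lift to in- and out-arcs at some specific vertices among $a,b$, which are connected in both directions via a directed path internal to the strong component. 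The whole argument is polynomial-time, and composing over all contractions in the sequence gives the general statement.

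The main obstacle is making the angle-to-angle correspondence rigorous, in particular when the same vertex appears multiple times on the boundary of a face of $D'$ (which can happen as $D'$ need not be $3$-connected), and when an arc of $Y'$ is itself a loop at $ab$—such a loop must be lifted to either a loop at $a$, a loop at $b$, or a (non-loop) arc between $a$ and $b$, again determined by its placement in the cyclic ordering around $ab$. Phrasing completions as sets of angle pairs rather than vertex pairs (as done in \Cref{ssec_definitions} for the main algorithm) makes this bookkeeping uniform, and the induction then goes through without further difficulty.
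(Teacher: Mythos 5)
Your proposal is correct and follows essentially the same route as the paper: reduce to a single contraction, push endpoints onto the merged vertex for the forward direction, and lift arcs back via the correspondence between angles at the merged vertex and angles at $u$ or $v$ (the paper phrases this through the face sets $F_u(D),F_v(D)$, your angle-pair bookkeeping is an equivalent and if anything slightly more careful formulation), with strong connectivity preserved because $u$ and $v$ are mutually reachable inside their strong component.
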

\begin{proofof}{\Cref{lem_reduc-almost-DAGs}}
Let us consider a plane multidigraph $D$, and let $D'$ be obtained from $D$ by contracting a non-loop arc $(u,v)$ contained in a strong component $C$ of $D.$ Let $w$ be the vertex of $D'$ resulting from the merging of $u$ and $v.$
It suffices to show how to transform a solution $X$ for $D$ into a solution $X'$ for $D',$ with $|X|=|X'|$ and  vice-versa. 

Given a solution $X$ for $D,$ we construct $X'$ from $X$ by keeping every arc not incident to $u$ or $v$, and replacing the $u$ or $v$ endpoint of the other arcs with $w.$ It is clear that $D'+X'$ is strong, and we can moreover embed it to be plane.

Given a solution $X'$ for $D',$ let us construct $X$ as follows.
Let us first distinguish some faces of $D.$ Let $\widehat{u}_1,\widehat{u}_2,\widehat{v}_1,\widehat{v}_2$ be the angles\footnote{Angles are defined in \Cref{ssec_definitions}} at $u$ or $v$ incident to $(u,v).$
Let $F_u(D),F_v(D)$ be the faces of $D$ that are incident to $u$ or $v,$ respectively, at an angle distinct from $\widehat{u}_1,\widehat{u}_2,\widehat{v}_1,\widehat{v}_2.$ Note that as the faces of $D$ and $D'$ have a natural bijection, $F_u(D),F_v(D)$ also denote sets of faces in $D'.$
To build $X,$ keep every arc that is not incident to $w$ and  for the other arcs $e,$ replace their endpoint $w$ by $u$ (resp. $v$), if $e$ is drawn in $F_u(D)$ (resp. if $e$ is not drawn in $F_u(D)$). 
The planarity of $D+X$ is clear and  for the strong connectivity, note that $u$ and $v$ are connected in both direction, in $D$ and  thus also in $D+X.$ Indeed, there is a arc from $u$ to $v$ and  as they belong both to $C$ there is a dipath from $v$ to $u.$ This implies that any dipath going though $w$ in $D'+X'$ can be rerouted through $u$ and $v.$
\end{proofof}

Let us now recall and prove \Cref{lem_reduc-DAG}.

\reducDAG*

\begin{proofof}{\Cref{lem_reduc-DAG}}\label{pf:reduc-DAG}
    For a polynomial-time Turing reduction let us see how to solve an arbitrary instance $(D,k)$ through a solution for polynomially many acyclic instances $(D_i,k_i)$ of similar sizes, in polynomial time.
    
    From \Cref{lem_reduc-almost-DAGs}, we know that any instance of \dirPSCA polynomially reduces to an instance $(D,k)$ that is a plane DAG with (possible) loops.
    We hence just have to deal with getting rid of the loops.
    Before going further, note that there is no point of having a loop in a solution $X,$ so faces incident to only one vertex should not contain any arc of a minimum solution $X.$
    
    For any loop $\ell=(v,v)$ of $D$, let $V_{in},V_{out}$ be the sets of vertices drawn inside or outside $\ell,$ respectively, those sets being possibly empty. Clearly, any solution $X$ for $D$ partitions into two solutions $X_{in},X_{out}$ (according to the position of the arcs with respect to $\ell$) for the subgraphs $D_{in} = D[V_{in}\cup\{v\}]$ and $D_{out} = D[V_{out}\cup\{v\}].$
    Indeed, any dipath with both ends in $D_{in}$ (resp $D_{out}$) cannot go through a vertex of $V_{out}$ (resp. $V_{in}$).
    Conversely, taking the union of two solutions $X_{in},X_{out}$ for $D_{in}$ and $D_{out},$ leads to a solution for $D.$ Indeed, this follows from the fact that every vertex of $V_{in}$ (resp. $V_{out}$) has a dipath from $v$ and a dipath towards $v$ in $D_{in}$ (resp. $D_{out}$). 
    
    Finally, observe that an arc set makes $D'_{in} = D_{in}-\{\ell\}$ (resp. $D'_{out} = D_{out}-\{\ell\}$) plane and strong if and only if it makes $D_{in}$ (resp. $D_{out}$) plane and strong.
    Hence, $(D,k)$ is a positive instance if and only if there exists $k_{in},k_{out}$ such that $k=k_{in}+k_{out}$ and  such that $(D'_{in},k_{in})$ and $(D'_{out},k_{out})$ are both positive instances. Actually our algorithm does not consider these $k+1$ pairs of instances, but rather recursively splits $D'_{in}$ and $D'_{out}$ along their loops until reaching loopless subgraphs.
    For each such splitting, we have that $|A(D)| = |A(D'_{in})| +|A(D'_{in})| +1$, so the recursive process above leads to at most $|A(D)|$ acyclic inputs.
    For each of these acyclic instances we try all the $k+1$ values in $[0,k].$ Then, we just have to see if the minimal number of arcs needed for the combination of these acyclic instances sums to at most $k$.
\end{proofof}

\subsection{Bounding local terminals in alternating faces}
\label{ssec_dir_bound_local}

We now wish to control the number of local terminals in terms of $|\cT(D)|,$ both within each face and  across the instance.
One cannot hope for a bound on the global number of local terminals.
Indeed, taking the union of any arbitrarily many directed paths, identifying all sources and  all sinks, gives an instance with two terminals and solution size $1,$ but with and an arbitrarily large number of simple faces.
We show here that such a bound is still attainable for alternating faces.
First, we need the following lemma, which can also be derived from a result of Guattery and Miller~\cite{guatteryContractionProcedurePlanar1992}.
\begin{restatable}[Theorem 2.1 in ~\cite{guatteryContractionProcedurePlanar1992}]{theorem}{boundSnDAG}\label{lem_bound-sn-DAG}
    For every plane acyclic digraph $D,$ we have $$\sum_{F\in F(D)} (\lt(F) - 2) \leq 2 |\cT(D)| - 4,$$
    with $\cT(D)$ being the set of terminals of $D$ and  $\lt(F)$ being the number of local terminals around $F.$ 
\end{restatable}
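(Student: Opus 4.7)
The plan is to combine a per-vertex angle count with Euler's formula. For each vertex $v$, following the plane embedding, consider the cyclic sequence of in/out labels on the arcs incident to $v$ and let $\sigma(v)$ denote the number of sign changes in this sequence. The angles at $v$ partition into three types: two consecutive out-arcs (a local source angle), two consecutive in-arcs (a local sink angle), or one in-arc and one out-arc (a \emph{smooth} angle). The smooth angles account for exactly $\sigma(v)$ of the $d(v)$ angles at $v$, while the local terminal angles account for the remaining $d(v) - \sigma(v)$. Since each angle around $v$ is a corner of exactly one face, summing over vertices yields
$$\sum_{F \in F(D)} \lt(F) \;=\; \sum_{v \in V(D)} \bigl(d(v) - \sigma(v)\bigr) \;=\; 2m - \sum_{v \in V(D)} \sigma(v),$$
where $m = |A(D)|$.

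Next I would lower bound $\sum_v \sigma(v)$. If $v \in \cT(D)$, so $v$ is a source or a sink of the acyclic digraph, then all arcs at $v$ are oriented the same way, hence $\sigma(v) = 0$. Otherwise $v$ has at least one in-arc and one out-arc, so its cyclic label sequence contains both symbols, and since the number of sign changes around any cycle is even, $\sigma(v) \geq 2$. This gives $\sum_v \sigma(v) \geq 2(n - |\cT(D)|)$ with $n = |V(D)|$, so that
$$\sum_{F \in F(D)} \lt(F) \;\leq\; 2m - 2n + 2|\cT(D)|.$$

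Finally, Euler's formula for a plane graph with $c \geq 1$ connected components gives $n - m + f = 1 + c \geq 2$, hence $m \leq n + f - 2$ where $f = |F(D)|$. Substituting into the previous inequality yields $\sum_F \lt(F) \leq 2f - 4 + 2|\cT(D)|$, which rearranges to the claimed bound $\sum_F (\lt(F) - 2) \leq 2|\cT(D)| - 4$.

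The proof has no deep structural obstacle; the main delicacy is bookkeeping when $D$ is not $2$-connected, since a vertex may appear several times on a single face boundary. This is handled by treating angles as corners of face-boundary walks, so that the identity $\sum_v d(v) = 2m$ holds unconditionally and each angle is still counted in a unique face. Acyclicity is used only to equate $\cT(D)$ with exactly the set of vertices for which $\sigma(v) = 0$, namely the global sources and sinks.
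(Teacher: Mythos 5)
Your proof is correct and follows essentially the same route as the paper's: your $\sigma(v)$ is exactly the paper's count $\overline{\lt}(v)$ of non-terminal angles at $v$, your identity $\sum_F \lt(F) = 2m - \sum_v \sigma(v)$ is the paper's angle-counting identity, and both arguments finish with the observation that $\sigma(v)=0$ precisely for terminals (else $\sigma(v)\geq 2$) together with Euler's formula. Your explicit handling of disconnected embeddings via $n-m+f=1+c$ is a minor point the paper leaves implicit.
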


\begin{proof}
    Given a plane DAG $D$ and  a vertex $v\in V(D),$ let $\overline{\lt}(v)$ denote the number of angles at $v$ that are not local terminals. As any of the $2|A(D)|$ angles of $D$ is either a local terminal for some $F$ or  a non-local terminal at some $v,$ we have that:
    $$\sum_{v\in V(D)} \overline{\lt}(v) + \sum_{F\in F(D)} \lt(F) = 2|A(D)|.$$
    Note that $\overline{\lt}(v)=0$ if and only if $v$ is a terminal (i.e. $v\in \cT(D)$). Otherwise, $\overline{\lt}(v)\ge 2.$
     \begin{align*}
        \sum_{F\in F(D)} (\lt(F) - 2) 
         &\le \sum_{F\in F(D)} (\lt(F) - 2) + \sum_{v\in V(D)} \max(0,(\overline{\lt}(v) - 2)) \\
         &\le \left( \sum_{F\in F(D)} \lt(F) \right) - 2|F(D)| + \left( \sum_{v\in V(D)} \overline{\lt}(v) \right) - 2|V(D)| + 2|\cT(D) | \\
         &\le 2(|A(D)| - |F(D)| - |V(D)|) + 2 |\cT(D) | \\
         &\le 2 |\cT(D)| - 4
     \end{align*}
    The last line follows by Euler's formula.
\end{proof}

The above implies that the number of alternating faces is bounded in terms of $|\cT(D)|.$ An even stronger consequence of \Cref{lem_bound-sn-DAG} is a bound on the number of local terminals across all alternating faces.

\lemboundsumlt*
\begin{proof}
    Let us first observe that for any solution $X$ and  any terminal $t\in \cT(D),$ there is an arc $e\in X$ incident to $t.$ Otherwise $t$ would remain a terminal and $D+X$ would not be strong. This implies that if $k<\frac12 |\cT(D)|,$ then $(D,k)$ is a \no-instance. In the following we thus assume that $k\ge \frac12 |\cT(D)|.$
    
    Now, since $\lt(F) \geq 4$ for any face $F \in \AF(D),$ \Cref{lem_bound-sn-DAG} yields $|\AF(D)|\le |\cT(D)|-2.$
    Then, all terms corresponding to simple faces in $\sum_{F\in F(D)} (\lt(F) - 2)$ are null, so $\sum_{\AF\in F(D)} \lt(F)  =  \sum_{F\in F(D)} (\lt(F) - 2) + 2 |\AF(D)|.$
    Therefore, combining \Cref{lem_bound-sn-DAG} with the bound on $|\AF(D)|$ yields $\sum_{F\in \AF(D)} \lt(F) \leq 4 |\cT(D)| - 8 < 8k.$
\end{proof}

\end{document}